\newtheorem{theorem}{Theorem}[section]
\newtheorem{lemma}{Lemma}[section]
\newtheorem{corollary}{Corollary}[section]
\newenvironment{proof}{{\noindent\it Proof:}}{\hfill $\square$\par}
\title{ The stabilized exponential-SAV approach preserving maximum bound principle
for nonlocal Allen-Cahn equation }
\author[a]{Xiaoqing Meng}
\author[a]{Aijie Cheng \thanks{Corresponding author: aijie@sdu.edu.cn}}
\author[b]{Zhengguang Liu}
\affil[a]{School of Mathematics, Shandong University, Jinan, Shandong 250100, China.}
\affil[b]{School of Mathematics and Statistics, Shandong Normal University, Jinan, Shandong 250014, China.}
\date{}
\begin{document}
\maketitle 

\begin{abstract}
The nonlocal Allen-Cahn equation with nonlocal diffusion operator 
is a generalization of the classical Allen-Cahn equation. 
It satisfies the energy dissipation law and maximum bound principle (MBP), 
and is important for simulating a series of physical and biological phenomena 
involving long-distance interactions in space.
In this paper, we construct first- and second-order (in time) 
accurate, unconditionally energy stable and MBP-preserving schemes
for the nonlocal Allen-Cahn type model based on the 
stabilized exponential scalar auxiliary variable (sESAV) approach. 
On the one hand, we have proved the MBP and unconditional energy stability 
carefully and rigorously in the fully discrete levels. 
On the other hand, we adopt an efficient FFT-based fast solver to 
compute the nearly full coefficient matrix generated from the spatial discretization, 
which improves the computational efficiency.
Finally, typical numerical experiments are presented to demonstrate the 
performance of our proposed schemes. 

\noindent{\textbf{keywords}} 
Nonlocal Allen-Cahn equation; Exponential scalar auxiliary variable approach;
Unconditional energy stability; Maximum bound principle; Fast solver;
\end{abstract}
\thispagestyle{empty}

\section{Introduction}
The mesoscopic description of phase transition is usually based on 
the assumption that the evolution of phase variable $\phi$ follows 
a gradient flow with free energy relative to a certain metric.
In general, from a mathematical point of view, phase field models 
are always derived from functional variations of free energy \cite{33}.
The classical Allen-Cahn equation was proposed by Allen and 
Cahn (1979) \cite{1} to simulate the motion of anti-phase boundaries in 
crystalline solids. 
The $L^2$ gradient flow with respect to the 
following free energy functional \cite{10}: 
\begin{align*}
    E\left( \phi \right) =\int_{\varOmega}{\left( \frac{\varepsilon ^2}{2}\left| \nabla \phi \left( \mathbf{x} \right) \right|+F\left( \phi \left( \mathbf{x} \right) \right) \right)}d\mathbf{x}.
\end{align*}
is the classical Allen-Cahn equation:
\begin{align}
    \label{LAC}
    \frac{\partial \phi}{\partial t}=\varepsilon ^2\varDelta \phi -F'\left( \phi \right) ,\quad \left( \mathbf{x},t \right) \in \varOmega \times J,
\end{align}
with the corresponding initial condition and periodic or homogeneous Neumann boundary conditions,
where $\varOmega \subset \mathbb{R}^d $ is a bounded spatial domain, 
the time domain is $J=[0,T]$ with $T$ the final time, 
$\phi =\phi \left( \mathbf{x},t \right) :\bar{\varOmega }\times J\rightarrow \mathbb{R}$ 
is the phase variable, $\varepsilon >0$ denotes the interface width,
and $F(\phi)$ is energy density function.
In the classical Allen-Cahn model, the thickness of the interface is modeled 
as order $\varepsilon $, where $0<\varepsilon \ll 1$. 
At the interface layer, the value of phase variable $\phi$ changes rapidly.\par
Roughly speaking, replacing the Laplacian $\Delta $ in the equation \eqref{LAC} by the nonlocal
diffusion operator $-\mathcal{L}$ \cite{3}, one can obtain the 
nonlocal Allen-Cahn (NAC) equation as follows \cite{4}:
\begin{align}
    \label{eqNAC}
    \frac{\partial \phi}{\partial t}=-\varepsilon ^2\mathcal{L}\phi -F'\left( \phi \right) ,\quad \left( \mathbf{x},t \right) \in  \varOmega \times J,
\end{align}
with the corresponding initial condition and periodic or homogeneous Neumann boundary conditions,
where the nonlocal operator $\mathcal{L}$ has been introducted by many articles 
such as \cite{4}:
\begin{align}
    \label{L}
    \mathcal{L}\phi \left( \mathbf{x} \right) =\int_{\varOmega}{J\left( \mathbf{x}-\mathbf{y} \right) \left[ \phi \left( \mathbf{x} \right) -\phi \left( \mathbf{y} \right) \right]}d\mathbf{y},\quad \mathbf{x}\in \varOmega ,
\end{align}
and the kernel $J$ satisfies the following conditions \cite{5}:
\begin{enumerate}[itemindent=1em]
    \item[1)] $J(\mathbf{x})\geq 0$ for any $\mathbf{x}\in \varOmega $,
    \item[2)] $J(\mathbf{x})=J(-\mathbf{x}) $,
    \item[3)] $J$ is $\varOmega $-periodic,
    \item[4)] $J$ is integrable, $J * 1=\int_{\varOmega}{J\left( \mathbf{x} \right)} d \mathbf{x}>0$ is a positive constant. 
\end{enumerate}\par
As mentioned in \cite{2,4,51,53,54,56}, the Allen-Cahn type equation
is important for simulating 
a variety of physical and biological phenomena involving media 
whose properties vary with space. 
It is well known that the classical Allen-Cahn equation  
is one of the typical systems of the phase filed models and 
has successfully applied to many complex interface problems.
In recent years, the NAC equation for phase separations 
within the interaction kernels \cite{8,52} 
has attracted more and more attention, 
and has been applied to many fields such as 
phase transition \cite{6}, image processing \cite{48,49} and so on. 
The most important point is that no matter the classical Allen-Cahn 
or NAC equation, their exact solutions satisfy two basic physical properties, 
namely the maximum bound principle (MBP) \cite{7,8,10} 
and the energy dissipation law \cite{9,10}.
Therefore, an important issue is to develop algorithms that inherit 
these two properties at the discrete level.\par 
The main challenge for numerical solutions of phase field models 
is the time-stepping scheme, that is, designing appropriate methods 
to discretize nonlinear and nonlocal terms while preserving 
energy stability and MBP. 
There have been extensive study and application of  
energy stable numerical schemes for time discretization of phase field models,
such as convex splitting schemes \cite{11,12,13}, stabilized semi-implicit 
schemes \cite{4,14,15,16}, the exponential time differencing (ETD) schemes
\cite{8,18,19}, the Invariant Energy Quadratization (IEQ) \cite{20,21,22}, 
the Scalar Auxiliary Variable (SAV) methods \cite{9,23,25}, and 
many variants of SAV schemes\cite{26,28,29,30,31,32,33,34}.
However, these existing SAV-type numerical schemes can not guarantee the MBP.\par
In recent years, the MBP preservation has attracted considerable 
attention in the numerical methods community.
The semi-implicit schemes for the classic and fractional  
Allen-Cahn equations have been widely studied \cite{35,36,37,38,39,40}.
The first- and second-order stablilized ETD schemes can be used to 
unconditionally guarantee the MBP of the NAC equation \cite{8} 
and the conservative Allen-Cahn equation \cite{41}.
In addition, the third- and fourth-order stabilized integration factor Runge-Kutta (IFRK) scheme are 
proposed \cite{42,43}.\par
In fact, the classical Allen-Cahn model can actually 
be viewed as the approximation of the NAC model where the nonlocal 
diffusion operator is replaced by the Laplace operator. 
The main objective of this paper is to develop and analyze an accurate 
and efficient linear algorithm based on the idea of 
stabilized exponential-SAV (sESAV) \cite{10} method 
by introducing an appropriate stability term 
for solving the general NAC models with general nonlinear potential. 
The MBP and unconditional energy stability of the fully discrete scheme 
are proved, and it can be seen that the first-order scheme 
satisfies MBP unconditionally, while the second-order scheme 
by Crank-Nicolson approximation has constrained condition for time step.
Furthermore, we use the second-order central finite difference 
in space, and due to the large amount of computation and memory required to 
solve this linear system, we analyze the properties of the stiffness matrix 
and use a fast and efficient solution method \cite{45} based on FFT to 
reduce the CPU computation time and memory requirements.\par
The rest of this paper is organized as follows. 
In section 2, we briefly introduce the NAC model with 
general nonlinear potential and give the spatial discretization 
formula of the nonlocal operator. 
In section 3, we construct the first- and second-order sESAV 
schemes for the NAC equation and prove the 
MBP and unconditional energy stability in the fully discrete level.
In section 4, We demonstrate the effectiveness, stability 
and accuracy of the proposed scheme by using the fast solution method 
through several 2D numerical examples.

\section{The NAC model and spatial discretization}
In this section, we introduce the NAC model with general nonlinear potential 
using the energy variational approach, and give some symbols that will be 
used for subsequent analysis. 
For simplicity, the two-dimensional domain is
$\varOmega =\left[ -L_x,L_x \right] \times \left[ -L_y,L_y \right] $, 
a rectangular cell in $\mathbb{R}^2$.
We use the second-order central difference scheme for spatial discretization, 
and for other spatial discretization, we refer to \cite{7,8,45,55} for more details.
\subsection{The NAC model}
Let us consider the nonlocal free energy functional \cite{5}:
\begin{align}
    \label{NE}
    E\left( \phi \right) =\int_{\varOmega}{\left( \frac{\varepsilon ^2}{4}\int_{\varOmega}{J\left( \mathbf{x}-\mathbf{y} \right) \left[ \phi \left( \mathbf{x} \right) -\phi \left( \mathbf{y} \right) \right]^2}d\mathbf{y}+F\left( \phi \left( \mathbf{x} \right) \right) \right)}d\mathbf{x}.
\end{align}
From the definition of the nonlocal operator $\mathcal{L}$ and 
the conditions that the kernel $J$ satisfies, we can obtain
\begin{align}
    \left( \mathcal{L}\phi ,\phi \right) =\frac{1}{2}\int_{\varOmega}{\int_{\varOmega}{J\left( \mathbf{x}-\mathbf{y} \right) \left[ \phi \left( \mathbf{x} \right) -\phi \left( \mathbf{y} \right) \right]^2}\text{d}\mathbf{y}}\text{d}\mathbf{x}\ge 0.
    \label{LJ}
\end{align}
Then the nonlocal free energy functional \eqref{NE} can be equivalently 
written as follows:
\begin{align}
    \label{E}
    E\left( \phi \right) =\frac{\varepsilon ^2}{2}\left( \mathcal{L}\phi ,\phi \right) +\int_{\varOmega}{F\left( \phi \left( \mathbf{x} \right) \right)}\text{d}\mathbf{x},
\end{align}
In fact, the $L^2$ gradient flow for the energy functional \eqref{E} is the NAC equation \cite{8}:
\begin{align}
    \label{NAC}
    \frac{\partial \phi}{\partial t}=-\varepsilon ^2\mathcal{L}\phi +f\left( \phi \right) ,\quad \left( \mathbf{x},t \right) \in \varOmega \times J,
\end{align}
where $f(\phi)=-F'(\phi)$, the nonlocal diffusion operator $\mathcal{L}$ is a linear, 
self-adjoint, positive semi-definite operator.
Just by simple derivation, we can get that
for any function $\eta \left( \mathbf{x} \right)$ which is sufficiently smooth and satisfies
$\eta |_{\partial \varOmega}=0$ on $\partial \varOmega $, 
the functional derivative 
$\frac{\delta E\left( \phi \right)}{\delta \phi}$ 
can be expressed by the following formula
\begin{align*}
     \int_{\varOmega}{\frac{\delta E\left( \phi \right)}{\delta \phi}\eta}d\mathbf{x}
    ={}& \lim_{\theta \rightarrow 0}\frac{E\left( \phi +\theta \eta \right) -E\left( \phi \right)}{\theta}\\    
    ={}& \lim_{\theta \rightarrow 0}\frac{1}{\theta}\int_{\varOmega}{\left( \frac{\varepsilon ^2}{2}\left( \left( \mathcal{L}\phi ,\phi \right) +2\theta \left( \mathcal{L}\phi ,\eta \right) +\theta ^2\left( \mathcal{L}\eta ,\eta \right) \right) -\frac{\varepsilon ^2}{2}\left( \mathcal{L}\phi ,\phi \right) \right)}d\mathbf{x}\\
    & +\lim_{\theta \rightarrow 0}\int_{\varOmega}{\frac{F\left( \phi +\theta \eta \right) -F\left( \phi \right)}{\theta}}d\mathbf{x}\\
    ={}& \varepsilon ^2\left( \mathcal{L}\phi ,\eta \right) +\int_{\varOmega}{F'\left( \phi \right) \eta}d\mathbf{x}\\
    ={}& \int_{\varOmega}{\left( \varepsilon ^2\mathcal{L}\phi +F'\left( \phi \right) \right) \eta}d\mathbf{x},
\end{align*}
then we have the NAC equation
$$
\frac{\partial \phi}{\partial t}=-\frac{\delta E\left( \phi \right)}{\delta \left( \phi \right)}=-\left( \varepsilon ^2\mathcal{L}\phi +F'\left( \phi \right) \right) 
,$$
with the initial condition
\begin{align*}
    \phi \left( \mathbf{x},0 \right) =\phi _0\left( \mathbf{x} \right) ,\quad \mathbf{x}\in \bar{\varOmega} ,
\end{align*}
and periodic or homogeneous Neumann boundary conditions.\par
According to \cite{4,7,8,45}, we know that the NAC model satisfies
the energy dissipation law, which means 
the free energy functional \eqref{E} corresponding to the exact solution of 
the NAC equation \eqref{eqNAC} decreases with time, that is,
\begin{align}
    \label{dis}
    \frac{dE\left( \phi \left( t \right) \right)}{dt}\le 0.
\end{align}
It has been proved in \cite{4,43} that the NAC equation \eqref{eqNAC} satisfies the MBP in the sense that 
if the maximum absolute value of the initial value $\phi_0$ is bounded by $\beta$, 
then the maximum absolute value of the solution $\phi$ is always bounded 
by $\beta$ for all time, i.e.,
\begin{align}
    \label{MBP}
    \underset{\mathbf{x}\in \bar{\varOmega }}{\max}\left| \phi _0 \right|\le \beta \Rightarrow \max_{\mathbf{x}\in \bar{\varOmega }}\left| \phi \left( t \right) \right|\le \beta ,\quad \forall t\in J.
\end{align}
\subsection{Spatial discretization}
We use the second-order central difference approach to discretize the 
space variables. Let $N_x$,$N_y,N_t$ be positive integers, and 
we divide both the spatial domain and the time domain into uniform grids 
\begin{equation*}
    h_x=\dfrac{2L_x}{N_x},\quad h_y=\dfrac{2L_y}{N_y},\quad \tau=\dfrac{T}{N_t}.
\end{equation*}
Then we define the following uniform grids:
\begin{align*}
    &\varOmega _h=\left\{ \left( x_i,y_j \right) \left| x_i=-L_x+i h_x,y_j=-L_y+j h_y,0\le i\le N_x,0\le j\le N_y \right. \right\}     ,\\
    &\varOmega _t=\left\{ t^n\left| t^n=n\tau ,0\le n \le N_t \right. \right\} .
\end{align*}
\par
The $L^2$ inner product between functions $\varphi \left( \mathbf{x} \right)$ 
and $\psi \left( \mathbf{x} \right)$ is 
denoted by 
$$
\left( \varphi ,\psi \right) =\int_{\varOmega}
{\varphi \left( \mathbf{x} \right) \psi \left( \mathbf{x} \right) 
\,\text{d}\mathbf{x}},\quad \forall \varphi ,\psi \in L^2\left( \varOmega \right) .
$$
Let $\mathcal{M}_h$ be the set of all periodic grid functions over $\varOmega_h $.
The discrete $L^2$ inner product $\left< \cdot ,\cdot \right> $, discrete $L^2$
norm $\lVert \cdot \rVert $, and discrete $L^\infty $ norm can be defined as:
$$
\left< \varphi , \psi \right> =h_xh_y\sum_{i=0}^{N_x}{\sum_{j=0}^{N_y}{\varphi _{i,j}}}\psi _{i,j},\quad
\lVert \varphi \rVert =\sqrt{\left< \varphi ,\varphi \right>},\quad
\lVert \varphi \rVert _{\infty}=\max_{0\le i\le N_x,0\le j\le N_y}\left| \varphi _{i,j} \right|,\quad
\forall \varphi ,\psi \in \mathcal{M}_h.
$$
At the same time, the spatically-discretized original energy functional can be defined as:
\begin{align}
    \label{DE}
    E_h\left( \omega \right) :=\frac{\varepsilon ^2}{2}\left< \mathcal{L}_h\omega ,\omega \right> +\left< F\left( \omega \right) ,1 \right> ,\quad \omega \in \mathcal{M}_h.
\end{align} 
\par
From \cite{33,34}, we can write the nonlocal operator $\mathcal{L}$ \eqref{L} 
as the following equivalent form:
\begin{equation*}
    \mathcal{L}\phi =\left( J*1 \right) \phi -J*\phi,
\end{equation*}
where
\begin{align*}
    &J*1=\int_{\varOmega}{J\left(\mathbf{x}\right)\,\text{d}\mathbf{x}},\\
    &\left(J*\phi\right)\left(\mathbf{x}\right)
    =\int_{\varOmega}{J\left(\mathbf{x}-\mathbf{y}\right)}\phi\left(\mathbf{y}\right)\,\text{d}\mathbf{y}
    =\int_{\varOmega}{J\left(\mathbf{y}\right)}\phi\left(\mathbf{x}-\mathbf{y}\right)\,\text{d}\mathbf{y},
\end{align*}
are exactly the periodic convolutions.
For any $\phi$, we can discrete $\mathcal{L}\phi$ at 
$\left( t^n,x_i,y_j \right) \in \varOmega _t\times \varOmega _h$, 
as follows:
\begin{align*}
    \left( \mathcal{L}_h\phi  \right) _{i,j}^n=\left( J*1 \right)_{i,j} \phi _{i,j}^n-\left( J*\phi  \right) _{i,j}^n,
    \quad 0\le i\le N_x,\quad 0\le j\le N_y,\quad 0\le n\le N_t,
\end{align*}
where the discrete formation $\left( J*1 \right)_{i,j} \phi _{i,j}^n$ and $\left( J*\phi  \right) _{i,j}^n$ can be written as:
\begin{align*}
    \left( J*1 \right) _{i,j}\phi _{i,j}^{n}=& h_xh_y\left[ \sum_{m_1=1}^{N_x-1}{\sum_{m_2=1}^{N_y-1}{J\left( x_{m_1}-x_i,y_{m_2}-y_j \right)}} \right.\\ 
        &+\frac{1}{2}\sum_{m_1=1}^{N_x-1}{\left( J\left( x_{m_1}-x_i,y_0-y_j \right) +J\left( x_{m_1}-x_i,y_{N_y}-y_j \right) \right)}\\
        &+\frac{1}{2}\sum_{m_2=1}^{N_y-1}{\left( J\left( x_0-x_i,y_{m_2}-y_j \right) +J\left( x_{N_x}-x_i,y_{m_2}-y_j \right) \right)}\\
        &+\frac{1}{4}\left( J\left( x_0-x_i,y_0-y_j \right) +J\left( x_{N_x}-x_i,y_0-y_j \right) \right)\\
        &\left. +\frac{1}{4}\left( J\left( x_0-x_i,y_{N_y}-y_j \right) +J\left( x_{N_x}-x_i,y_{N_y}-y_j \right) \right) \right] \phi _{i,j}^{n},           
\end{align*}            
and
\begin{align*}            
    \left( J*\phi  \right) _{i,j}^n=
        &h_xh_y\left[ \sum_{m_1=1}^{N_x-1}{\sum_{m_2=1}^{N_y-1}{J\left( x_{m_1}-x_i,y_{m_2}-y_j \right) \phi _{m_1,m_2}^{n}}} \right.\\ 
        &+\frac{1}{2}\sum_{m_1=1}^{N_x-1}{\left( J\left( x_{m_1}-x_i,y_0-y_j \right) \phi _{m_1,0}^{n}+J\left( x_{m_1}-x_i,y_{N_y}-y_j \right) \phi _{m_1,N_y}^{n} \right)}\\
        &+\frac{1}{2}\sum_{m_2=1}^{N_y-1}{\left( J\left( x_0-x_i,y_{m_2}-y_j \right) \phi _{0,m_2}^{n}+J\left( x_{N_x}-x_i,y_{m_2}-y_j \right) \phi _{N_x,m_2}^{n} \right)}\\
        &+\frac{1}{4}\left( J\left( x_0-x_i,y_0-y_j \right) \phi _{0,0}^{n}+J\left( x_{Nx}-x_i,y_0-y_j \right) \phi _{N_x,0}^{n} \right)\\
        &\left. +\frac{1}{4}\left( J\left( x_0-x_i,y_{N_y}-y_j \right) \phi _{0,N_y}^{n}+J\left( x_{N_x}-x_i,y_{N_y}-y_j \right) \phi _{N_x,N_y}^{n} \right) \right].
\end{align*}
\par
Without loss of generality, we set $L_x=L_y,h_x=h_y=h,N_x=N_y=N$.
By applying four transformation operators 
$\mathcal{A}_1,\mathcal{A}_2,\mathcal{A}_3,\mathcal{A}_4$
and all elements of them are non-negative.
We transform the stiffness matrix into a block-Toeplitz-Toeplitz-block (BTTB) 
$(N+1)^2\times (N+1)^2$-matrix $\mathbf{B}$, 
and we can refer to reference \cite{45} for specific details on this step.
Then, for any $(N+1)^2$-vector $\omega$, we obtain the matrix-vector 
multiplication $\mathcal{L}_h\omega$ by 
\begin{align*}
    \quad\mathcal{L}_h\omega &=\frac{1}{4}h^2\left[ \mathbf{B}\left( \mathcal{A}_1+\mathcal{A}_2+\mathcal{A}_3+\mathcal{A}_4 \right) \mathbf{I}_1 \right] \cdot \omega -\frac{1}{4}h^2\mathbf{B}\left[ \left( \mathcal{A}_1+\mathcal{A}_2+\mathcal{A}_3+\mathcal{A}_4 \right) \omega \right] \\
    &=\frac{1}{4}h^2\left[ \left( \mathbf{B}_1\mathbf{I}_1 \right) \cdot \omega -\mathbf{B}_1\omega \right] \\
    &=\frac{1}{4}h^2\left( \mathbf{B}_2-\mathbf{B}_1 \right) \omega 
\end{align*}
where the symbol $\cdot$ in this formula represents the 
dot product between vectors,
the vertor $\mathbf{I}_1$ is all one $(N+1)^2$-vector, $(N+1)^2$-matrix $\mathcal{L}_h=-\frac{1}{4}h^2\left( \mathbf{B}_2-\mathbf{B}_1 \right)$, 
the matrix $\mathbf{B}_1=\mathbf{B}\left( \mathcal{A}_1+\mathcal{A}_2+\mathcal{A}_3+\mathcal{A}_4 \right)$,
and the diagonal matrix $\mathbf{B}_2$ is a $(N+1)^2 \times (N+1)^2$-matrix with the elements of $(N+1)^2$-vector 
$\mathbf{B}_1\mathbf{I}_1$ as the main diagonal elements.
As mentioned by \cite{45}, we know that matrix $\mathcal{L}_h$ is almost full,
so we use a fast solver by means of the techniques of FFT and FCG in the numerical experiments. 
By analyzing the elements of the discrete operator $\mathcal{L}_h$,
the following lemma is valid \cite{39}: 
\begin{lemma}
    \label{l1}
    For any $a>0$, we have $\lVert \left( aI+\mathcal{L}_h \right) ^{-1} \rVert _{\infty}\le a^{-1}$,
    where $I$ is the identity matrix. 
\end{lemma}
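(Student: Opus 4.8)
The plan is to exploit the M-matrix structure of $aI+\mathcal{L}_h$ together with a discrete maximum principle; matrix symmetry is not actually needed, only the sign pattern and the vanishing row sums of $\mathcal{L}_h$.

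First I would record the structural properties of $\mathcal{L}_h$ that are visible in the discretization formulas above. Both $(J*1)_{i,j}\phi_{i,j}^n$ and $(J*\phi)_{i,j}^n$ are assembled from the \emph{same} composite-trapezoidal weights $c^{(i,j)}_{m_1,m_2}$ (the factors $1,\tfrac12,\tfrac14$ multiplied by $h_xh_y\,J(x_{m_1}-x_i,y_{m_2}-y_j)$), all nonnegative since $J\ge 0$, and $(J*1)_{i,j}=\sum_{m_1,m_2}c^{(i,j)}_{m_1,m_2}$. Hence, for every grid function $u\in\mathcal{M}_h$,
\begin{equation*}
  (\mathcal{L}_h u)_{i,j}=(J*1)_{i,j}u_{i,j}-(J*u)_{i,j}=\sum_{m_1,m_2}c^{(i,j)}_{m_1,m_2}\bigl(u_{i,j}-u_{m_1,m_2}\bigr),\qquad c^{(i,j)}_{m_1,m_2}\ge 0 .
\end{equation*}
In matrix terms $\mathcal{L}_h$ has nonpositive off-diagonal entries and vanishing row sums, so $aI+\mathcal{L}_h$ has positive diagonal, nonpositive off-diagonal entries and is strictly diagonally dominant for $a>0$; together with $\langle\mathcal{L}_h u,u\rangle\ge 0$ (which is \eqref{LJ} at the discrete level, and is also immediate from the displayed identity), $aI+\mathcal{L}_h$ is symmetric positive definite, hence nonsingular, so $(aI+\mathcal{L}_h)^{-1}v$ is well defined for every $v$.

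Then I would prove the estimate by a discrete maximum principle. Fix $v$, set $u=(aI+\mathcal{L}_h)^{-1}v$ so that $a u+\mathcal{L}_h u=v$, and pick an index $(i^*,j^*)$ with $|u_{i^*,j^*}|=\lVert u\rVert_\infty$. If $u_{i^*,j^*}=0$ then $u=0$ and the bound is trivial; otherwise, after replacing $(u,v)$ by $(-u,-v)$ if necessary, assume $u_{i^*,j^*}=\lVert u\rVert_\infty>0$. Since $u_{i^*,j^*}\ge u_{m_1,m_2}$ for all $(m_1,m_2)$ and the weights $c^{(i^*,j^*)}_{m_1,m_2}$ are nonnegative, the displayed identity gives $(\mathcal{L}_h u)_{i^*,j^*}\ge 0$, and therefore
\begin{equation*}
  \lVert v\rVert_\infty\ \ge\ |v_{i^*,j^*}|\ =\ v_{i^*,j^*}\ =\ a\,u_{i^*,j^*}+(\mathcal{L}_h u)_{i^*,j^*}\ \ge\ a\,u_{i^*,j^*}\ =\ a\,\lVert u\rVert_\infty .
\end{equation*}
Thus $\lVert(aI+\mathcal{L}_h)^{-1}v\rVert_\infty=\lVert u\rVert_\infty\le a^{-1}\lVert v\rVert_\infty$ for all $v$, i.e. $\lVert(aI+\mathcal{L}_h)^{-1}\rVert_\infty\le a^{-1}$.

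The only genuinely tedious point is the first step: checking, from the fully assembled discrete operator (boundary quadrature weights, the operators $\mathcal{A}_1,\dots,\mathcal{A}_4$ and the BTTB matrix $\mathbf{B}$), that all off-diagonal entries are $\le 0$ and that the row sums vanish. This is pure bookkeeping and relies only on the nonnegativity of $J$ and of every weight, plus the fact that the diagonal contribution $(J*1)_{i,j}$ is built from exactly the same weights as the convolution $(J*u)_{i,j}$. Once this sign and row-sum structure is established, the rest is immediate; alternatively one may simply invoke M-matrix theory (a strictly diagonally dominant matrix with positive diagonal and nonpositive off-diagonal entries is a nonsingular M-matrix with entrywise nonnegative inverse), but the self-contained argument above is shorter.
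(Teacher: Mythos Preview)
Your proof is correct, but it takes a genuinely different route from the paper's. Both arguments begin from the same structural facts about $\mathcal{L}_h$ (nonnegative diagonal, nonpositive off-diagonal, vanishing row sums), but the paper then proceeds algebraically: it writes $aI+\mathcal{L}_h=(a+d)(I-sC)$ with $d=\max_i d_{i,i}$, $s=d/(a+d)<1$, checks $\lVert C\rVert_\infty=1$, and bounds the inverse via the Neumann series $\lVert\sum_{p\ge 0}(sC)^p\rVert_\infty\le\sum_{p\ge 0}s^p=(1-s)^{-1}$, giving the factor $1/a$. Your argument instead evaluates $au+\mathcal{L}_h u=v$ at an extremal index and uses the sign of $(\mathcal{L}_h u)_{i^*,j^*}$ directly, which is the discrete maximum principle in its barest form. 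Your route is shorter and avoids any appeal to spectral radius or series convergence; the paper's route, on the other hand, yields as a by-product that $(aI+\mathcal{L}_h)^{-1}=\frac{1}{a+d}\sum_{p\ge 0}(sC)^p$ is entrywise nonnegative (the M-matrix conclusion you mention but do not need). Either approach is perfectly adequate here.
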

\begin{proof}
    From the properties of the kernel $J$ and the definition of the four 
    transformation matrixs, we know that all the elements of 
    $\mathbf{B},\mathcal{A}_1,\mathcal{A}_2,\mathcal{A}_3,\mathcal{A}_4$ 
    are non-negative, so the matrix $\mathcal{L}_h=(d_{i,j})$ 
    has the property that the main diagonal elements 
    are nonnegative and the other elements are nonpositive.
    Furthermore, it satisfies
    $$
    -d_{i,i}=\sum_{1\le j\le \left( N+1 \right) ^2,j\ne i}{d_{i,j}},\quad 1\le i\le \left( N+1 \right) ^2.
    $$
    Then we obtain that the matrix $aI+\mathcal{L}_h$ is strictly diagonally 
    dominant, so the matrix is invertible.
    Now we rewrite $aI+\mathcal{L}_h$ as:
    $$aI+\mathcal{L}_h=\left( a+d \right) \left( I-sC \right) ,$$
    where $d=\underset{1\le i\le \left( N+1 \right) ^2}{\max}d_{i,i}, 
    s=\frac{d}{a+d}<1, C=I-\frac{1}{d}\mathcal{L}_h$,
    and the matrix $C=(c_{i,j})$ satisfies 
    $$
    \lVert C \rVert _{\infty}=\max_{1\le i\le \left( N+1 \right) ^2}\sum_{j=1}^{\left( N+1 \right) ^2}{\left| c_{i.j} \right|}=\max_{1\le i\le \left( N+1 \right) ^2}\left( 1-\frac{d_{i,i}}{d}-\sum_{j=1,j\ne i}^{\left( N+1 \right) ^2}{\frac{d_{i,j}}{d}} \right) =1
    .$$
    By Gershgorin's circle theorem, we know that 
    $$
    \rho \left( sC \right) =s\rho \left( C \right) \le s<1,
    $$
    where $\rho\left( C \right)$ is the spectral radius of matrix $C$.
    So we have
    \begin{align*}
        \lVert \left( aI+\mathcal{L}_h \right) ^{-1} \rVert _{\infty}&=\lVert \left( \left( a+d \right) \left( I-sC \right) \right) ^{-1} \rVert _{\infty}=\frac{1}{a+d}\lVert \sum_{p=0}^{\infty}{\left( sC \right) ^p} \rVert _{\infty}\\
        \le \frac{1}{a+d}\sum_{p=0}^{\infty}{s^p\lVert C \rVert _{\infty}^{p}}&= \frac{1}{a+d}\cdot \frac{1}{1-s}=\frac{1}{a}.
    \end{align*}
\end{proof}
\par
We assume that $f(\phi)=-F'(\phi)$ is continuously differentiable and
there exists a constant $\beta>0$ which satisfies formula \eqref{MBP}
such that 
$f\left( \beta \right) \le 0\le f\left( -\beta \right)$, 
then we have the lemma as follows \cite{7}: 
\begin{lemma}
    \label{l2}
    If for some positive constant $\kappa$, $\kappa \ge \lVert f' \rVert _{C\left[ -\beta ,\beta \right]}
    =\underset{\xi \in \left[ -\beta ,\beta \right]}{\max}\left| f'\left( \xi \right) \right|$,
    we have $$
    \left| f\left( \xi \right) +\kappa \xi \right|\le \kappa \beta ,\quad \forall \xi \in \left[ -\beta ,\beta \right].
    $$
\end{lemma}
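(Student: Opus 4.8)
The plan is to reduce the two-sided bound to a monotonicity statement about the auxiliary function $g(\xi):=f(\xi)+\kappa\xi$ on the interval $[-\beta,\beta]$. Since $f$ is assumed continuously differentiable there, so is $g$, with $g'(\xi)=f'(\xi)+\kappa$. First I would use the hypothesis $\kappa\ge\lVert f'\rVert_{C[-\beta,\beta]}$ in the one-sided form $f'(\xi)\ge-\kappa$ for every $\xi\in[-\beta,\beta]$, which yields $g'(\xi)\ge0$ throughout, i.e.\ $g$ is nondecreasing on $[-\beta,\beta]$.

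From monotonicity, $g(-\beta)\le g(\xi)\le g(\beta)$ for all $\xi\in[-\beta,\beta]$. The next step is to evaluate the endpoints using the sign conditions $f(\beta)\le0\le f(-\beta)$ attached to the definition of $\beta$: at the right endpoint $g(\beta)=f(\beta)+\kappa\beta\le\kappa\beta$, and at the left endpoint $g(-\beta)=f(-\beta)-\kappa\beta\ge-\kappa\beta$. Chaining these with the monotone bracketing gives $-\kappa\beta\le g(\xi)\le\kappa\beta$, which is exactly $|f(\xi)+\kappa\xi|\le\kappa\beta$ for all $\xi\in[-\beta,\beta]$, as claimed.

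There is essentially no hard step; the only points worth care are that $\kappa$ is used only through the lower bound $f'\ge-\kappa$ (the companion bound $f'\le\kappa$ plays no role), and that the endpoint sign conditions on $f$ are precisely what convert the one-sided monotone estimate into the symmetric bound. If one preferred to dispense with differentiability, the identical conclusion follows from the Lipschitz estimate $|f(\xi)-f(\eta)|\le\kappa|\xi-\eta|$ applied at $\eta=\pm\beta$ together with $f(\pm\beta)$ having the stated sign; but since $f\in C^1$ is hypothesized, the monotonicity argument is the cleanest route and is the one I would write out.
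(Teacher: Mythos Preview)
Your argument is correct: monotonicity of $g(\xi)=f(\xi)+\kappa\xi$ on $[-\beta,\beta]$ together with the endpoint sign conditions $f(\beta)\le 0\le f(-\beta)$ (stated in the paper immediately before the lemma) gives the two-sided bound cleanly. Note that the paper does not actually supply a proof of this lemma; it merely cites reference~[7], so there is no in-paper argument to compare against, and your write-up would serve well as the missing proof.
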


\section{The sESAV schemes for the NAC model}
In this section, we mainly study the temporal discretization scheme of the NAC equation, 
and prove the energy stable and MBP of the fully discrete schemes based 
on the second-order central difference formula in space. 
In summary, we construct the linear, first- and second-order (in time) 
unconditionally energy stable and MBP-preserving schemes 
for the NAC model with general nonlinear potential.
The stabilized exponential-SAV (sESAV) \cite{33} 
is proposed to preserve the energy dissipation law and the MBP of the Allen-Cahn 
type equation, which is of great significance for solving the stiffness 
problem with thin interface. 
Thanks to the continuity of $F(\phi)$ and the MBP \eqref{MBP}, we know that
$$
E_1\left( \phi \right) :=\int_{\varOmega}{F\left( \phi \left( \mathbf{x} \right) \right)}\text{d}\mathbf{x}\ge -C_*,
$$
for some constant $C_*\ge 0$.
For the stabilized exponential-SAV (sESAV) scheme, 
similar to \cite{10}, we first introduct a scalar auxiliary variable: 
$s(t) = E_1(\phi(t)),$
so there is the modified nonlocal free energy functional 
which is equivalent to the initial energy functional \eqref{NE} 
in the continuous sense:
\begin{align}
    \label{E1}
    \bar{E}\left( \phi,s \right) =\frac{\varepsilon ^2}{2}\left( \mathcal{L}\phi ,\phi \right) +s.
\end{align}
We can rewrite the NAC equation \eqref{eqNAC} as:
\begin{subequations}
    \label{eq0}
    \begin{align}
        &\frac{\partial \phi}{\partial t}=- \varepsilon ^2\mathcal{L}\phi +g\left( \phi ,s \right) f(\phi)  \label{eq0-1},\\
        &\frac{ds}{dt}=-g\left( \phi ,s \right) \left( f\left( \phi \right) ,\frac{\partial \phi}{\partial t} \right) \label{eq0-2},        
        \end{align}  
    \end{subequations}
where
$$
g\left( \phi ,s \right) :=\frac{\exp \left\{ s \right\}}{\exp \left\{ E_1\left( \phi \right) \right\}}>0,\quad f(\phi) = -F'(\phi).
$$
\par
From now on, we assume that the initial value $\phi_{0}$ 
has sufficient regularity as required.
From the spatial discretization, there is a constant $h_0>0$ 
that depends on $\phi_{0},F,\varepsilon $, such that
\begin{align}
    \label{eqh0}
    E_h(\phi_{0}) \leq E(\phi_{0})+1,\quad \lVert \mathcal{L}_h\phi \rVert \le \lVert \mathcal{L}\phi \rVert _{L^2}+1,\quad \forall h\in (  0,h_0  ]  . 
\end{align} 
\par
\subsection{First-order sESAV1 scheme }
In this paper, $\phi^n$ stands for the fully discrete approximation of 
the exact solution $\phi(\mathbf{x},t)$ of the NAC equation \eqref{eqNAC}.
A semi-implicit first-order fully discrete sESAV (sESAV1) scheme for \eqref{eq0} reads as 
\begin{subequations}
    \label{eq11}
    \begin{align}
        &\delta _t\phi ^{n+1}=- \varepsilon ^2\mathcal{L}_h\phi ^{n+1}+g\left( \phi ^n,s^n \right) f\left( \phi ^n \right)  -\kappa g\left( \phi ^n,s^n \right) \left( \phi ^{n+1}-\phi ^n \right) ,\label{eq1-1}\\
        &\delta _ts^{n+1}=-g\left( \phi ^n,s^n \right) \left< f\left( \phi ^n \right) ,\delta _t\phi ^{n+1} \right> ,\label{eq1-2}
        \end{align}   
    \end{subequations}
where 
$$
g\left( \phi ^n,s^n \right) :=\frac{\exp \left\{ s^n \right\}}{\exp \left\{ E_{1h}\left( \phi ^n \right) \right\}}>0,\quad
\delta _t\phi ^{n+1}=\frac{\phi ^{n+1}-\phi ^n}{\varDelta t},
$$
and $E_{1h}$ is defined as $E_{1h}(\omega):=<F(\omega ),1>,\omega \in \mathcal{M}_h, $
$\kappa \ge 0$ is the stability constant, and for the specific value, refer to \cite{10}.
\par
The scheme \eqref{eq11} is started by $\phi^0=\phi_0$ and $s^0=E_{1h}(\phi^0)$.
We can rewrite the scheme \eqref{eq11} equivalently as follows:
\begin{subequations}
    \label{eq12}
    \begin{align}
        \left[ \left( \frac{1}{\tau}+\kappa g\left( \phi ^n,s^n \right) \right) I+\varepsilon ^2\mathcal{L}_h \right] \phi ^{n+1}&=\left( \frac{1}{\tau}+\kappa g\left( \phi ^n,s^n \right) \right) \phi ^n+g\left( \phi ^n,s^n \right) f\left( \phi ^n \right) ,\label{eq2-1}\\
        s^{n+1}&=s^n-g\left( \phi ^n,s^n \right) \left< f\left( \phi ^n \right) ,\phi ^{n+1}-\phi ^n \right> .\label{eq2-2}
        \end{align}   
    \end{subequations}
    \par
Obviously, since the matrix 
$\left( \frac{1}{\tau}+\kappa g\left( \phi ^n,s^n \right) \right) I+\varepsilon ^2\mathcal{L}_h$
is positive definite for any $\tau>0$, 
we know that the solution to the system \eqref{eq12} exists and is unique.
Furthermore, it can be seen that this numerical scheme \eqref{eq12} 
is very simple for the calculation of unknowns 
that is
$\phi^{n+1}$ can be solved linearly by equation \eqref{eq2-1}
and then $s^{n+1}$ can be solved directly by equation \eqref{eq2-2}.

\subsubsection{Energy dissipation law and MBP}
First we can write the spatically-discretized scheme of 
the modified nonlocal free energy \eqref{E1} as:
\begin{align}
    \label{eqDE}
    \bar{E}_h\left( \phi ^n,s^n \right) :=\frac{\varepsilon ^2}{2}\left< \mathcal{L}_h\phi ^n,\phi ^n \right> +s^n.   
\end{align}
\begin{theorem}{(Energy dissipation of sESAV1)}
    \label{t1}
    For any $\kappa \ge 0$ and $\tau > 0$,the sESAV1 scheme is 
    unconditionally energy stable in the sense that 
    $\bar{E}_h\left( \phi ^{n+1},s^{n+1} \right) \le \bar{E}_h\left( \phi ^n,s^n \right)$. 
\end{theorem}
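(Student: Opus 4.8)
The plan is to carry out the standard SAV-type energy estimate in the fully discrete setting: pair the first equation of the scheme with the discrete time difference $\delta_t\phi^{n+1}$ in the discrete $L^2$ inner product, use the scalar equation \eqref{eq1-2} to eliminate the nonlinear contribution, and then exploit that $\mathcal{L}_h$ is self-adjoint and positive semi-definite (the discrete counterpart of \eqref{LJ}, which is also used in \eqref{DE}), together with $\tau>0$, $\kappa\ge 0$ and $g(\phi^n,s^n)>0$, to conclude that every remaining term is non-positive. This will in fact produce an exact discrete energy identity, from which unconditional stability is immediate.

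First I would take $\langle\,\cdot\,,\delta_t\phi^{n+1}\rangle$ on both sides of \eqref{eq1-1}, which gives
$$
\lVert\delta_t\phi^{n+1}\rVert^2
=-\varepsilon^2\langle\mathcal{L}_h\phi^{n+1},\delta_t\phi^{n+1}\rangle
+g(\phi^n,s^n)\langle f(\phi^n),\delta_t\phi^{n+1}\rangle
-\kappa g(\phi^n,s^n)\langle\phi^{n+1}-\phi^n,\delta_t\phi^{n+1}\rangle .
$$
Here the last term equals $-\tfrac{\kappa}{\tau}g(\phi^n,s^n)\lVert\phi^{n+1}-\phi^n\rVert^2\le 0$. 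For the diffusion term I would use the elementary identity valid for the symmetric operator $\mathcal{L}_h$,
$$
\langle\mathcal{L}_h\phi^{n+1},\phi^{n+1}-\phi^n\rangle
=\tfrac12\langle\mathcal{L}_h\phi^{n+1},\phi^{n+1}\rangle
-\tfrac12\langle\mathcal{L}_h\phi^{n},\phi^{n}\rangle
+\tfrac12\langle\mathcal{L}_h(\phi^{n+1}-\phi^n),\phi^{n+1}-\phi^n\rangle ,
$$
so that $-\varepsilon^2\langle\mathcal{L}_h\phi^{n+1},\delta_t\phi^{n+1}\rangle$ reproduces the difference $\tfrac{\varepsilon^2}{2\tau}\langle\mathcal{L}_h\phi^{n},\phi^{n}\rangle-\tfrac{\varepsilon^2}{2\tau}\langle\mathcal{L}_h\phi^{n+1},\phi^{n+1}\rangle$ plus the non-positive remainder $-\tfrac{\varepsilon^2}{2\tau}\langle\mathcal{L}_h(\phi^{n+1}-\phi^n),\phi^{n+1}-\phi^n\rangle$. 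For the nonlinear term, \eqref{eq1-2} gives directly $g(\phi^n,s^n)\langle f(\phi^n),\delta_t\phi^{n+1}\rangle=-\delta_t s^{n+1}=-(s^{n+1}-s^n)/\tau$.

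Multiplying through by $\tau>0$ and regrouping then yields the exact energy identity
$$
\bar E_h(\phi^{n+1},s^{n+1})-\bar E_h(\phi^{n},s^{n})
=-\tau\lVert\delta_t\phi^{n+1}\rVert^2
-\frac{\varepsilon^2}{2}\langle\mathcal{L}_h(\phi^{n+1}-\phi^n),\phi^{n+1}-\phi^n\rangle
-\kappa\, g(\phi^n,s^n)\lVert\phi^{n+1}-\phi^n\rVert^2 .
$$
Since $\tau>0$, $\kappa\ge 0$, $g(\phi^n,s^n)>0$ by its definition, and $\langle\mathcal{L}_h v,v\rangle\ge 0$ for every grid function $v$ by the positive semi-definiteness of $\mathcal{L}_h$, each of the three terms on the right-hand side is $\le 0$, hence $\bar E_h(\phi^{n+1},s^{n+1})\le\bar E_h(\phi^{n},s^{n})$ for all $\tau>0$ and all $\kappa\ge 0$.

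I do not expect any genuine obstacle: the estimate is a routine manipulation and, crucially, no CFL-type constraint on $\tau$ enters anywhere, which is exactly why the stability is unconditional. The only points that require a little care are the bookkeeping of the $\tau$ factors when passing between $\delta_t\phi^{n+1}$ and $\phi^{n+1}-\phi^n$, and explicitly invoking the discrete version of \eqref{LJ} to guarantee $\langle\mathcal{L}_h v,v\rangle\ge 0$. Note also that the stabilization term and the SAV term only make the identity more dissipative; dropping them (i.e. taking $\kappa=0$) the argument still goes through, so they are not needed for energy stability itself, only later for the MBP.
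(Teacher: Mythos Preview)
Your proposal is correct and follows essentially the same route as the paper: test \eqref{eq1-1} with the time increment, use the symmetric-operator identity to extract the difference of $\tfrac{\varepsilon^2}{2}\langle\mathcal{L}_h\phi,\phi\rangle$ plus a non-negative remainder, replace the nonlinear inner product via \eqref{eq1-2}, and conclude from $g>0$, $\kappa\ge0$ and positive semi-definiteness of $\mathcal{L}_h$. The only cosmetic difference is that the paper tests with $\phi^{n+1}-\phi^n$ whereas you test with $\delta_t\phi^{n+1}$ and multiply by $\tau$ afterwards; your resulting energy identity is in fact the same (indeed $\tau\lVert\delta_t\phi^{n+1}\rVert^2=\tfrac{1}{\tau}\lVert\phi^{n+1}-\phi^n\rVert^2$), and is slightly more precise than what is displayed in the paper, which has a harmless misprint in the coefficient of the $\mathcal{L}_h$-remainder.
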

\begin{proof}
    Taking the discrete inner product with \eqref{eq1-1} by $\phi^{n+1}-\phi^n$ yields
    \begin{equation}
        \label{eq1}
        \begin{aligned}            
            \left( \frac{1}{\tau}+\kappa g\left( \phi ^n,s^n \right) \right) \lVert \phi ^{n+1}-\phi ^n \rVert ^2
            =-\varepsilon ^2\left< \mathcal{L}_h\phi ^{n+1},\phi ^{n+1}-\phi ^n \right> +g\left( \phi ^n,s^n \right) \left< f\left( \phi ^n \right) ,\phi ^{n+1}-\phi ^n \right> .
        \end{aligned}
    \end{equation}
    Combining \eqref{eq1-2}, \eqref{eq1}, and the identity
    \begin{align*}
        2\left< \mathcal{L}_h\phi ^{n+1},\phi ^{n+1}-\phi ^n \right> 
        =\left< \mathcal{L}_h\left( \phi ^{n+1}-\phi ^n \right) ,\phi ^{n+1}-\phi ^n \right> +\left< \mathcal{L}_h\phi ^{n+1},\phi ^{n+1} \right> -\left< \mathcal{L}_h\phi ^n,\phi ^n \right> ,
    \end{align*}
    we obtain
    \begin{align*}
        &\bar{E}_h\left( \phi ^{n+1},s^{n+1} \right) -\bar{E}_h\left( \phi ^n,s^n \right) \\
        &=-\left( \frac{1}{\tau}+\kappa g\left( \phi ^n,s^n \right) \right) \lVert \phi ^{n+1}-\phi ^n \rVert ^2-\varepsilon ^2\left< \mathcal{L}_h\left( \phi ^{n+1}-\phi ^n \right) ,\phi ^{n+1}-\phi ^n \right> \\
        &\le 0,
    \end{align*}
    where $\kappa \ge 0$, $g(\phi^n,s^n)>0$ and the operator $\mathcal{L}_h$ is positive semi-definite.
\end{proof}
\par
Theorem \ref{t1} shows that the scheme \eqref{eq11} satisfies energy dissipation with 
respect to the modified energy $\bar{E}_h\left( \phi ^n,s^n \right)$  
rather than the original energy $E_h(\phi^n)$.
\begin{theorem}{(MBP of sESAV1)}
    \label{t2}
    If $\kappa \geq \lVert f' \rVert _{C\left[ -\beta ,\beta \right]}$,
    the sESAV1 \eqref{eq11} preserves the MBP for $\{\phi^n\}$,i.e.,
    the fully discrete scheme of \eqref{MBP} holds:
    \begin{align}
        \label{DMBP}
        \lVert \phi _{0} \rVert _{\infty}\le \beta \Rightarrow \lVert \phi ^n \rVert _{\infty}\le \beta .
    \end{align}
\end{theorem}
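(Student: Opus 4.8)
The plan is to argue by induction on $n$. The base case $n=0$ is immediate from the initialization $\phi^0=\phi_0$ together with the hypothesis $\lVert \phi_0 \rVert_\infty\le\beta$. For the inductive step I assume $\lVert \phi^n \rVert_\infty\le\beta$ and deduce $\lVert \phi^{n+1} \rVert_\infty\le\beta$ directly from the equivalent linear system \eqref{eq2-1}, viewing the scheme as a discrete maximum principle.

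Write $g^n:=g(\phi^n,s^n)>0$ and $a:=\tfrac{1}{\tau}+\kappa g^n>0$, so that \eqref{eq2-1} reads $(aI+\varepsilon^2\mathcal{L}_h)\phi^{n+1}=a\phi^n+g^n f(\phi^n)$. First I would note that $\varepsilon^2\mathcal{L}_h$ inherits from $\mathcal{L}_h$ exactly the structure exploited in the proof of Lemma \ref{l1} (nonnegative diagonal entries, nonpositive off-diagonal entries, vanishing row sums); equivalently, one may factor $aI+\varepsilon^2\mathcal{L}_h=\varepsilon^2\big((a\varepsilon^{-2})I+\mathcal{L}_h\big)$ and apply Lemma \ref{l1} with $a$ replaced by $a\varepsilon^{-2}$, which yields $\lVert (aI+\varepsilon^2\mathcal{L}_h)^{-1} \rVert_\infty\le \varepsilon^{-2}(a\varepsilon^{-2})^{-1}=a^{-1}$. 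Hence $\lVert \phi^{n+1} \rVert_\infty\le a^{-1}\lVert a\phi^n+g^n f(\phi^n) \rVert_\infty$.

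It then remains to show $\lVert a\phi^n+g^n f(\phi^n) \rVert_\infty\le a\beta$. Splitting off the stabilization term, $a\phi^n+g^n f(\phi^n)=\tfrac{1}{\tau}\phi^n+g^n\big(\kappa\phi^n+f(\phi^n)\big)$. Since $\kappa\ge\lVert f' \rVert_{C[-\beta,\beta]}$, Lemma \ref{l2} gives the pointwise bound $|\kappa\xi+f(\xi)|\le\kappa\beta$ for all $|\xi|\le\beta$; applying this at every grid node (legitimate by the inductive hypothesis $\lVert \phi^n \rVert_\infty\le\beta$) together with $g^n>0$ yields $\lVert a\phi^n+g^n f(\phi^n) \rVert_\infty\le \tfrac{1}{\tau}\beta+g^n\kappa\beta=a\beta$. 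Combining with the previous estimate gives $\lVert \phi^{n+1} \rVert_\infty\le a^{-1}\cdot a\beta=\beta$, which closes the induction and proves \eqref{DMBP}.

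The computation is routine; the only point that needs care is the role of $\kappa$. It must be chosen at least as large as $\lVert f' \rVert_{C[-\beta,\beta]}$ precisely so that, via Lemma \ref{l2}, the reaction contribution $g^n(\kappa\phi^n+f(\phi^n))$ stays inside the ball of radius $g^n\kappa\beta$; this is exactly what makes the full right-hand side bounded by $a\beta$, so that the constant cancels the factor $a^{-1}$ supplied by Lemma \ref{l1}. Note that no constraint on $\tau$ enters anywhere, so the first-order scheme is unconditionally MBP-preserving.
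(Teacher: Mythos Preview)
Your proof is correct and follows essentially the same induction argument as the paper: invert the linear operator via Lemma~\ref{l1}, bound the right-hand side via Lemma~\ref{l2}, and watch the constant $a=\tfrac{1}{\tau}+\kappa g^n$ cancel. If anything, you are slightly more careful than the paper in justifying why Lemma~\ref{l1} applies to $aI+\varepsilon^2\mathcal{L}_h$ rather than $aI+\mathcal{L}_h$ (your factorization $aI+\varepsilon^2\mathcal{L}_h=\varepsilon^2\bigl((a\varepsilon^{-2})I+\mathcal{L}_h\bigr)$ makes this explicit, whereas the paper invokes the lemma directly).
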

\begin{proof}
    We can prove this theorem by mathematical induction.
    Suppose $(\phi^n,s^n)$ is given and 
    $\lVert \phi ^n \rVert _{\infty}\le \beta$ for some $n$.
    According to \eqref{eq2-1}, we obtain
    $$
    \phi ^{n+1}=\left[ \left( \frac{1}{\tau}+\kappa g\left( \phi ^n,s^n \right) \right) I+\varepsilon ^2\mathcal{L}_h \right] ^{-1}\left[ \left( \frac{1}{\tau}+\kappa g\left( \phi ^n,s^n \right) \right) \phi ^n+g\left( \phi ^n,s^n \right) f\left( \phi ^n \right) \right] 
    .
    $$
    Since $g(\phi^n,s^n)>0$, by Lemma \ref{l1}, we get
    $$
    \left\lVert  \left[ \left( \frac{1}{\tau}+\kappa g\left( \phi ^n,s^n \right) \right) I+\varepsilon ^2\mathcal{L}_h \right]  ^{-1} \right\rVert _\infty\le \left( \frac{1}{\tau}+\kappa g\left( \phi ^n,s^n \right) \right) ^{-1}.
    $$
    Combining $\kappa \ge \lVert f' \rVert _{C\left[ -\beta ,\beta \right]}$,
    $\lVert \phi ^n \rVert _{\infty}\le \beta$,
    and Lemma \ref{l2}, it has
    $$
    \left\lVert \frac{1}{\tau}\phi ^n+g\left( \phi ^n,s^n \right) \left( f\left( \phi ^n \right) +\kappa \phi ^n \right) \right\rVert _{\infty}\le \left( \frac{1}{\tau}+\kappa g\left( \phi ^n,s^n \right) \right) \beta .
    $$
    So, we have 
    $$
    \lVert \phi ^{n+1} \rVert _{\infty}\le \left( \frac{1}{\tau}+\kappa g\left( \phi ^n,s^n \right) \right) ^{-1}\left( \frac{1}{\tau}+\kappa g\left( \phi ^n,s^n \right) \right) \beta =\beta .
    $$
    Therefore, by induction, $\lVert \phi ^n \rVert _{\infty}\le \beta$, for all $n$.
\end{proof}

\subsection{Second-order sESAV2 scheme}
Our second-order fully discrete sESAV (sESAV2) is given below: 
\begin{subequations}
    \label{eq21}
    \begin{align}
        &\delta _t\phi ^{n+1}=-\varepsilon ^2\mathcal{L}_h\phi ^{n+\frac{1}{2}}+g( \bar{\phi}^{n+\frac{1}{2}},\bar{s}^{n+\frac{1}{2}} ) f( \bar{\phi}^{n+\frac{1}{2}} ) -\kappa g( \bar{\phi}^{n+\frac{1}{2}},\bar{s}^{n+\frac{1}{2}} ) ( \phi ^{n+\frac{1}{2}}-\bar{\phi}^{n+\frac{1}{2}} ) 
        ,\label{eqCN-1}\\
        &\delta _ts^{n+1}=-g( \bar{\phi}^{n+\frac{1}{2}},\bar{s}^{n+\frac{1}{2}} ) \left< f( \bar{\phi}^{n+\frac{1}{2}} ) ,\delta _t\phi ^{n+1} \right> +\kappa g( \bar{\phi}^{n+\frac{1}{2}},\bar{s}^{n+\frac{1}{2}} ) \left< \phi ^{n+\frac{1}{2}}-\bar{\phi}^{n+\frac{1}{2}},\delta _t\phi ^{n+1} \right> ,
        \label{eqCN-2}
        \end{align}   
    \end{subequations}
where
$$
g( \bar{\phi}^{n+\frac{1}{2}},\bar{s}^{n+\frac{1}{2}} ) :=\frac{\exp \left\{ \bar{s}^{n+\frac{1}{2}} \right\}}{\exp \left\{ E_1( \bar{\phi}^{n+\frac{1}{2}} ) \right\}}>0,
\quad
\phi ^{n+\frac{1}{2}}=\frac{\phi ^{n+1}+\phi ^n}{2},
$$
and $( \bar{\phi}^{n+\frac{1}{2}},\bar{s}^{n+\frac{1}{2}} )$ are arbitrary explicit
$O(\tau^2)$ appropriation for $( \phi^{n+\frac{1}{2}},s^{n+\frac{1}{2}})$ respectively.
For instance, we may choose the extrapolation
\begin{subequations}
    \label{eqss}
    \begin{align}   
        \bar{\phi} ^{n+\frac{1}{2}}&=\frac{3}{2}\phi ^n-\frac{1}{2}\phi ^{n-1},\label{ss1}\\
        \bar{s}^{n+\frac{1}{2}}&=\frac{3}{2}s^n-\frac{1}{2}s^{n-1},n\ge 1,
    \end{align}
\end{subequations}
or the semi-implicit scheme
\begin{subequations}
    \label{eqs}
    \begin{align}
        \frac{\bar{\phi}^{n+\frac{1}{2}}-\phi ^n}{\tau /2}&=-\varepsilon ^2\mathcal{L}_h\bar{\phi}^{n+\frac{1}{2}}+g\left( \phi ^n,s^n \right) f\left( \phi ^n \right) -\kappa g\left( \phi ^n,s^n \right) ( \bar{\phi}^{n+\frac{1}{2}}-\phi ^n ) ,\label{s1} \\
        \bar{s}^{n+\frac{1}{2}}-s^n&=-g\left( \phi ^n,s^n \right) \left< f\left( \phi ^n \right) ,\bar{\phi}^{n+\frac{1}{2}}-\phi ^n \right>,n\ge 0.
        \end{align}   
    \end{subequations}
\par
The system \eqref{eq21} is started by $\phi^0=\phi_0$, $s^0=E_{1h}(\phi^0)$.
We can rewrite the system \eqref{eq21} in the following form:
\begin{subequations}
    \label{eq22}
    \begin{align}
        &D_1\phi ^{n+1}=D_2\phi ^n+2g( \bar{\phi}^{n+\frac{1}{2}},\bar{s}^{n+\frac{1}{2}} ) \left[ f( \bar{\phi}^{n+\frac{1}{2}} ) +\kappa \bar{\phi}^{n+\frac{1}{2}} \right],
        \label{eqCN2-1}\\
        &s^{n+1}=s^n-g( \bar{\phi}^{n+\frac{1}{2}},\bar{s}^{n+\frac{1}{2}} ) \left< f( \bar{\phi}^{n+\frac{1}{2}} ) -\kappa ( \phi ^{n+\frac{1}{2}}-\bar{\phi}^{n+\frac{1}{2}} ) ,\phi ^{n+1}-\phi ^n \right>,
        \label{eqCN2-2}\\
        &D_1=\left( \frac{2}{\tau}+\kappa g( \bar{\phi}^{n+\frac{1}{2}},\bar{s}^{n+\frac{1}{2}} ) \right) I+\varepsilon ^2\mathcal{L}_h,\notag\\
        &D_2=\left( \frac{2}{\tau}-\kappa g( \bar{\phi}^{n+\frac{1}{2}},\bar{s}^{n+\frac{1}{2}} ) \right) I-\varepsilon ^2\mathcal{L}_h\notag.
        \end{align}   
    \end{subequations}
    \par
It is easy to see that since $D_1$ is positive definite for any $\tau>0$. 
So the system \eqref{eq22} is linear and uniquely solvable,
and the unknowns $\phi^{n+1}$, $s^{n+1}$
are computed similarly to the sESAV1 scheme.

\subsubsection{Energy dissipation law and MBP}
\begin{theorem}{(Energy dissipation of sESAV2)}
    \label{t3}
    For any $\kappa \ge 0$ and $\tau > 0$,the sESAV2 scheme is unconditionally energy stable in the sense that 
    $\bar{E}_h\left( \phi ^{n+1},s^{n+1} \right) \le \bar{E}_h\left( \phi ^n,s^n \right)$,
    where $\bar{E}_h\left( \phi ^n,s^n \right)$ is given by \eqref{eqDE}.
\end{theorem}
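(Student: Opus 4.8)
The plan is to follow the strategy of the proof of Theorem \ref{t1}: take the discrete inner product of the momentum equation \eqref{eqCN-1} with $\phi^{n+1}-\phi^n=\tau\delta_t\phi^{n+1}$, and then eliminate all nonlinear and stabilization contributions using the auxiliary-variable equation \eqref{eqCN-2}. Testing \eqref{eqCN-1} produces, on the left, $\frac{1}{\tau}\lVert\phi^{n+1}-\phi^n\rVert^2\ge0$, and, on the right, a diffusion term $-\varepsilon^2\langle\mathcal{L}_h\phi^{n+\frac12},\phi^{n+1}-\phi^n\rangle$ together with the terms $g(\bar\phi^{n+\frac12},\bar s^{n+\frac12})\langle f(\bar\phi^{n+\frac12}),\phi^{n+1}-\phi^n\rangle$ and $-\kappa g(\bar\phi^{n+\frac12},\bar s^{n+\frac12})\langle\phi^{n+\frac12}-\bar\phi^{n+\frac12},\phi^{n+1}-\phi^n\rangle$. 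By construction these last two are precisely the two terms that appear on the right of \eqref{eqCN-2} (tested with $\delta_t\phi^{n+1}$); multiplying \eqref{eqCN-2} by $\tau$ shows their sum equals $-(s^{n+1}-s^n)$. This exact matching is the crucial point that makes the scheme work for \emph{any} $\kappa\ge0$ with no stability restriction on $\tau$.

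The diffusion term is handled by the midpoint telescoping identity: since $\phi^{n+\frac12}=\tfrac12(\phi^{n+1}+\phi^n)$ and $\mathcal{L}_h$ is self-adjoint,
\[
\langle\mathcal{L}_h\phi^{n+\frac12},\phi^{n+1}-\phi^n\rangle=\tfrac12\bigl(\langle\mathcal{L}_h\phi^{n+1},\phi^{n+1}\rangle-\langle\mathcal{L}_h\phi^n,\phi^n\rangle\bigr),
\]
so $-\varepsilon^2\langle\mathcal{L}_h\phi^{n+\frac12},\phi^{n+1}-\phi^n\rangle$ contributes exactly $-\tfrac{\varepsilon^2}{2}\langle\mathcal{L}_h\phi^{n+1},\phi^{n+1}\rangle+\tfrac{\varepsilon^2}{2}\langle\mathcal{L}_h\phi^n,\phi^n\rangle$. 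Collecting all contributions and invoking the definition \eqref{eqDE} of $\bar E_h$, the tested identity rearranges to
\[
\bar E_h(\phi^{n+1},s^{n+1})-\bar E_h(\phi^n,s^n)=-\tfrac{1}{\tau}\lVert\phi^{n+1}-\phi^n\rVert^2\le0,
\]
which is the claim.

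I expect the only delicate point to be careful bookkeeping: tracking the $\tau$ factors when converting between $\delta_t\phi^{n+1}$ and $\phi^{n+1}-\phi^n$, and checking sign-for-sign that the nonlinear and $\kappa$-stabilization terms in \eqref{eqCN-1} and \eqref{eqCN-2} cancel rather than accumulate. Note that, in contrast with the first-order case, no residual term $\langle\mathcal{L}_h(\phi^{n+1}-\phi^n),\phi^{n+1}-\phi^n\rangle$ arises — the Crank--Nicolson midpoint evaluation already yields an \emph{exact} energy difference — so positive semi-definiteness of $\mathcal{L}_h$ is not needed for this estimate, and the dissipation is furnished entirely by the $\tfrac{1}{\tau}\lVert\phi^{n+1}-\phi^n\rVert^2$ term. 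Positivity of $g$ is likewise irrelevant here, though it is of course used elsewhere.
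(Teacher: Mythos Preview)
Your proposal is correct and follows essentially the same approach as the paper: test \eqref{eqCN-1} with $\phi^{n+1}-\phi^n$, use the Crank--Nicolson midpoint identity to convert the diffusion term into an exact energy difference, and combine with \eqref{eqCN-2} multiplied by $\tau$ to cancel the nonlinear and stabilization terms, arriving at $\bar E_h(\phi^{n+1},s^{n+1})-\bar E_h(\phi^n,s^n)=-\tfrac{1}{\tau}\lVert\phi^{n+1}-\phi^n\rVert^2\le0$. Your additional remark that positive semi-definiteness of $\mathcal{L}_h$ is not invoked here (in contrast with Theorem~\ref{t1}) is a nice observation that the paper does not make explicit.
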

\begin{proof}
    Taking the discrete inner product with \eqref{eqCN-1} by $\phi^{n+1}-\phi^n$ yields
    \begin{equation}
        \label{eq2}
        \begin{aligned}    
            \frac{1}{\tau}\lVert \phi ^{n+1}-\phi ^n \rVert ^2=&-\frac{\varepsilon ^2}{2} \left< \mathcal{L}_h\phi ^{n+1},\phi ^{n+1} \right> +\frac{\varepsilon ^2}{2}\left< \mathcal{L}_h\phi ^n,\phi ^n \right> \\
            &-g( \bar{\phi}^{n+\frac{1}{2}},\bar{s}^{n+\frac{1}{2}} ) \left< f( \bar{\phi}^{n+\frac{1}{2}} ) ,\phi ^{n+1}-\phi ^n \right> \\
            &-\kappa g( \bar{\phi}^{n+\frac{1}{2}},\bar{s}^{n+\frac{1}{2}} ) \left< \phi ^{n+\frac{1}{2}}-\bar{\phi}^{n+\frac{1}{2}},\phi ^{n+1}-\phi ^n \right> 
        \end{aligned}
    \end{equation}
    Combining \eqref{eqCN-2},\eqref{eq2}, we obtain
    \begin{align*}
        \bar{E}_h\left( \phi ^{n+1} \right) -\bar{E}_h\left( \phi ^n \right) =-\frac{1}{\tau}\lVert \phi ^{n+1}-\phi ^n \rVert ^2\le 0,
    \end{align*}
    then we complete the proof. 
\end{proof}

Note that the coefficient $g(\phi^n,s^n)$ may vary step-by-step, which is different
from the continuous case that $g(\phi,s)\equiv 1$ exactly, but by \cite{33}, 
we know that it can be bounded 
by some constant, which is illustrated in the following:
\begin{corollary}
    \label{c1}
    if $h \le h_0, \kappa \ge \lVert f' \rVert _{C\left[ -\beta ,\beta \right]}$ and $\lVert \phi _0 \rVert _\infty \le \beta$,
    then there exists a constant $C^*=C^*(\phi_0,C_*)$, such that $0<g\left( \bar{\phi}^{n+\frac{1}{2}},\bar{s}^{n+\frac{1}{2}} \right) \le C^*$
    for all $n$.
\end{corollary}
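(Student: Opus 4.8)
The plan is to exploit the explicit form of the auxiliary coefficient, $g(\phi,s)=\exp\{s-E_1(\phi)\}$ (discretely $g(\phi^n,s^n)=\exp\{s^n-E_{1h}(\phi^n)\}$), so that positivity is automatic and the whole statement reduces to an \emph{upper} bound for the exponent $\bar s^{n+\frac12}-E_1(\bar\phi^{n+\frac12})$, uniform in $n$ and $\tau$. This splits into an upper bound for the auxiliary variable and a lower bound for the nonlinear part of the energy, and these two ingredients are supplied, respectively, by the energy dissipation law and the MBP already established.

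For the upper bound I would use the discrete energy laws. By Theorems~\ref{t1}, \ref{t3} the modified energy $\bar E_h(\phi^n,s^n)$ is non-increasing, and the initialization $\phi^0=\phi_0$, $s^0=E_{1h}(\phi_0)$ gives $\bar E_h(\phi^0,s^0)=E_h(\phi_0)\le E(\phi_0)+1$ by \eqref{eqh0}. Since $\mathcal L_h$ is positive semi-definite, $s^n\le\bar E_h(\phi^n,s^n)\le E(\phi_0)+1$ for every $n$. For the lower bound I would invoke the MBP of the scheme, so that $\|\phi^n\|_\infty\le\beta$ (Theorem~\ref{t2} for sESAV1, and the analogous result for sESAV2): by continuity of $F$ the quadrature $E_{1h}(\phi^n)=\langle F(\phi^n),1\rangle$ is then bounded below by a constant of the same nature as $-C_*$, depending only on $F|_{[-\beta,\beta]}$ and $|\varOmega|$. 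Combining the two, $g(\phi^n,s^n)\le\exp\{E(\phi_0)+1+C_*\}=:C^*$, which in particular bounds the coefficient $g(\phi^n,s^n)$ appearing in the semi-implicit predictor \eqref{eqs}.

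It remains to control $g(\bar\phi^{n+\frac12},\bar s^{n+\frac12})$ itself. If $(\bar\phi^{n+\frac12},\bar s^{n+\frac12})$ comes from the predictor \eqref{eqs}, observe that \eqref{s1} is precisely the sESAV1 scheme with step size $\tau/2$, so Theorem~\ref{t2} gives $\|\bar\phi^{n+\frac12}\|_\infty\le\beta$ and hence $E_1(\bar\phi^{n+\frac12})\ge-C_*$; the $\bar s$-update in \eqref{eqs}, together with $s^n\le E(\phi_0)+1$, $g(\phi^n,s^n)\le C^*$, the boundedness of $f$ on $[-\beta,\beta]$, and $\|\bar\phi^{n+\frac12}-\phi^n\|\le 2\beta\,|\varOmega|^{1/2}$, then bounds $\bar s^{n+\frac12}$ from above, and the claim follows. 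For the extrapolation \eqref{eqss} one still has $\|\bar\phi^{n+\frac12}\|_\infty\le 2\beta$, so $E_1(\bar\phi^{n+\frac12})$ stays bounded below; but since $\bar s^{n+\frac12}=\tfrac32 s^n-\tfrac12 s^{n-1}$, bounding it from above additionally requires a uniform \emph{lower} bound on $s^n$.

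That lower bound is the main obstacle. I would obtain it by tracking $r^n:=s^n-E_{1h}(\phi^n)$, so that $r^0=0$ and $g(\phi^n,s^n)=e^{r^n}$. Subtracting a Taylor expansion of $E_{1h}(\phi^{n+1})-E_{1h}(\phi^n)$ from the $s$-update yields a recursion of the form $|r^{n+1}-r^n|\le c_1\|\phi^{n+1}-\phi^n\|+c_2(\|\phi^{n+1}-\phi^n\|^2+\|\phi^n-\phi^{n-1}\|^2)$, with $c_1,c_2$ depending only on $C^*$, $\|f\|_{C[-\beta,\beta]}$, $\|F''\|_{C[-\beta,\beta]}$ and $|\varOmega|$; telescoping the energy identity of Theorems~\ref{t1}, \ref{t3} bounds $\tfrac1\tau\sum_{k\le n}\|\phi^{k+1}-\phi^k\|^2$ by $E_h(\phi_0)-s^{n+1}$, and Cauchy--Schwarz then gives $\sum_{k\le n}\|\phi^{k+1}-\phi^k\|\le(T(E_h(\phi_0)-s^{n+1}))^{1/2}$, so that $|r^N|$ is controlled by a constant $M$ independent of $N$ and $\tau$. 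The subtlety is that this increment estimate already presupposes a lower bound on $s^{n+1}$; I would therefore close the argument by strong induction on $n$, assuming $r^k\ge-M$ (hence $g^k\le C^*$) for $k\le n$ and choosing $M$ large enough, relative to $T$, $E_h(\phi_0)$ and the structural constants, for the bound to propagate to $k=n+1$ — this is exactly the bootstrap carried out in \cite{33}. Throughout, the nonlocal diffusion operator enters only via its positive semi-definiteness and Lemma~\ref{l1}, so none of the estimates is affected by the nonlocality.
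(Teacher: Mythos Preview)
The paper gives no proof of this corollary; it simply cites \cite{33} and states the result. Your reconstruction is along the right lines and, as you say at the end, is essentially the bootstrap from that reference.

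There is one structural gap you should make explicit. In your second paragraph you invoke the MBP for the sESAV2 iterates (``the analogous result for sESAV2'') to obtain $\lVert\phi^n\rVert_\infty\le\beta$ and hence $E_{1h}(\phi^n)\ge -C_*$; but that theorem is stated \emph{after} Corollary~\ref{c1} and its proof uses Corollary~\ref{c1} through the time-step restriction \eqref{ss}. The circularity is resolved by proving the two statements together by induction on $n$: assume $\lVert\phi^k\rVert_\infty\le\beta$ for all $k\le n$, derive the bound $g(\bar\phi^{n+\frac12},\bar s^{n+\frac12})\le C^*$ from that hypothesis, and then run the $\lVert\cdot\rVert_\infty$ argument of the sESAV2 MBP theorem to get $\lVert\phi^{n+1}\rVert_\infty\le\beta$. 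Your closing induction is phrased around $r^k\ge -M$, which controls the \emph{lower} end of $g^k=e^{r^k}$, not the upper bound you need (so the parenthetical ``hence $g^k\le C^*$'' is the wrong implication); the MBP itself has to be part of the induction hypothesis.

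For the predictor choice \eqref{eqs}, once the joint induction is set up, there is a cleaner route than the one you sketch: the predictor is a single sESAV1 step of size $\tau/2$, so Theorem~\ref{t1} applied to it gives $\bar E_h(\bar\phi^{n+\frac12},\bar s^{n+\frac12})\le\bar E_h(\phi^n,s^n)\le E_h(\phi_0)$ and hence $\bar s^{n+\frac12}\le E_h(\phi_0)$ directly. Combined with $\lVert\bar\phi^{n+\frac12}\rVert_\infty\le\beta$ from Theorem~\ref{t2} this yields $C^*=\exp\{E_h(\phi_0)+C_*\}$ without ever needing to bound $g(\phi^n,s^n)$ or $\lVert\bar\phi^{n+\frac12}-\phi^n\rVert$ as intermediate quantities, and without any dependence on $T$.
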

\begin{theorem}{(MBP of sESAV2)}
    If $h \le h_0,\kappa \geq \lVert f' \rVert _{C\left[ -\beta ,\beta \right]}$, and
    \begin{align}
        \label{ss}
        \tau \le \left( \frac{\kappa G^*}{2}+\frac{\varepsilon ^2}{h^2} \right) ^{-1},
    \end{align}
    then the sESAV2 scheme \eqref{eq21}
    preserves the MBP for $\{\phi^n\}$, i.e., \eqref{DMBP} is valid.
\end{theorem}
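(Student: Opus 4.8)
\emph{Proof proposal.} The plan is to argue by induction on $n$, in the spirit of Theorem~\ref{t2} but additionally tracking the Crank--Nicolson correction matrix $D_2$ from~\eqref{eq22}. Assume $\lVert \phi^k \rVert_\infty \le \beta$ for all $0\le k \le n$; the base case is the hypothesis $\lVert\phi_0\rVert_\infty\le\beta$. The first step is to control the predictor: when $(\bar\phi^{n+1/2},\bar s^{n+1/2})$ is generated by the semi-implicit half-step~\eqref{eqs}, equation~\eqref{s1} is exactly one sESAV1 step~\eqref{eq1-1} with step size $\tau/2$ and data $\phi^n$, so Theorem~\ref{t2}, which is unconditional in the step size, already gives $\lVert \bar\phi^{n+1/2} \rVert_\infty \le \beta$ provided $\kappa \ge \lVert f'\rVert_{C[-\beta,\beta]}$. (With the extrapolation~\eqref{eqss} one only gets $\lVert\bar\phi^{n+1/2}\rVert_\infty\le 2\beta$, so I would carry out the argument for the semi-implicit predictor, or add the standing hypothesis $\bar\phi^{n+1/2}\in[-\beta,\beta]$.)

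Next I would use the equivalent form~\eqref{eq22}, namely $\phi^{n+1} = D_1^{-1}\bigl( D_2\phi^n + 2g\,[f(\bar\phi^{n+1/2}) + \kappa\bar\phi^{n+1/2}] \bigr)$, where I abbreviate $g := g(\bar\phi^{n+1/2},\bar s^{n+1/2}) > 0$. Applying Lemma~\ref{l1} with $a = \tfrac{2}{\tau} + \kappa g$ gives $\lVert D_1^{-1}\rVert_\infty \le (\tfrac{2}{\tau} + \kappa g)^{-1}$, so it remains to bound the bracketed vector by $(\tfrac{2}{\tau}+\kappa g)\beta$ in the $\infty$-norm. Its nonlinear part is handled as in Theorem~\ref{t2}: since $\lVert\bar\phi^{n+1/2}\rVert_\infty\le\beta$ and $\kappa\ge\lVert f'\rVert_{C[-\beta,\beta]}$, Lemma~\ref{l2} gives $\lVert 2g\,[f(\bar\phi^{n+1/2})+\kappa\bar\phi^{n+1/2}]\rVert_\infty \le 2\kappa g\beta$.

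The crux is the term $D_2\phi^n$, and this is where~\eqref{ss} is needed. From the proof of Lemma~\ref{l1}, $\mathcal{L}_h=(d_{i,j})$ has nonnegative diagonal, nonpositive off-diagonal, and vanishing row sums; hence $D_2 = (\tfrac{2}{\tau}-\kappa g)I - \varepsilon^2\mathcal{L}_h$ has off-diagonal entries $-\varepsilon^2 d_{i,j}\ge 0$ and diagonal entries $\tfrac{2}{\tau}-\kappa g-\varepsilon^2 d_{i,i}$. Corollary~\ref{c1} supplies a uniform bound $0<g\le G^*$ (the constant denoted $C^*$ there, valid for $h\le h_0$), and combined with the bound $\max_i d_{i,i}\le 2/h^2$ on the largest diagonal entry of $\mathcal{L}_h$, the restriction $\tau\le(\tfrac{\kappa G^*}{2}+\tfrac{\varepsilon^2}{h^2})^{-1}$ forces $\tfrac{2}{\tau}\ge \kappa G^* + \tfrac{2\varepsilon^2}{h^2} \ge \kappa g+\varepsilon^2 d_{i,i}$ for every $i$; thus $D_2$ is entrywise nonnegative. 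Because the row sums of $\mathcal{L}_h$ vanish, every row of $D_2$ then sums to $\tfrac{2}{\tau}-\kappa g$, so $\lVert D_2\rVert_\infty = \tfrac{2}{\tau}-\kappa g$ and $\lVert D_2\phi^n\rVert_\infty\le(\tfrac{2}{\tau}-\kappa g)\beta$. Adding the two estimates gives $\lVert D_2\phi^n + 2g\,[f(\bar\phi^{n+1/2})+\kappa\bar\phi^{n+1/2}]\rVert_\infty \le (\tfrac{2}{\tau}-\kappa g + 2\kappa g)\beta = (\tfrac{2}{\tau}+\kappa g)\beta$, whence $\lVert\phi^{n+1}\rVert_\infty \le (\tfrac{2}{\tau}+\kappa g)^{-1}(\tfrac{2}{\tau}+\kappa g)\beta = \beta$, closing the induction.

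I expect the $D_2\phi^n$ estimate to be the main obstacle: it forces one to combine the sign structure of $\mathcal{L}_h$ with a quantitative bound on its largest diagonal entry in order to turn the CFL-type condition~\eqref{ss} into entrywise nonnegativity of $D_2$. Without~\eqref{ss} one cannot guarantee $\lVert D_2\rVert_\infty = \tfrac{2}{\tau}-\kappa g$, and the cancellation $(\tfrac{2}{\tau}-\kappa g)+2\kappa g = \tfrac{2}{\tau}+\kappa g$ that makes the induction close breaks down. A secondary subtlety is the predictor: the whole estimate only goes through once $\bar\phi^{n+1/2}\in[-\beta,\beta]$, which is exactly why the half-step sESAV1 predictor~\eqref{eqs}---being unconditionally MBP-preserving by Theorem~\ref{t2}---is the convenient choice, whereas a plain extrapolation would require a separate argument.
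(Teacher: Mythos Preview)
Your proposal is correct and follows essentially the same route as the paper: induction on $n$, the splitting $\phi^{n+1}=D_1^{-1}\bigl(D_2\phi^n+2g[f(\bar\phi^{n+1/2})+\kappa\bar\phi^{n+1/2}]\bigr)$, Lemma~\ref{l1} for $\lVert D_1^{-1}\rVert_\infty$, Lemma~\ref{l2} for the nonlinear term, Corollary~\ref{c1} together with~\eqref{ss} to force $\lVert D_2\rVert_\infty=\tfrac{2}{\tau}-\kappa g$, and the same final cancellation. In fact you are more explicit than the paper in two places: (i) you spell out \emph{why} $\lVert D_2\rVert_\infty=\tfrac{2}{\tau}-\kappa g$ via entrywise nonnegativity of $D_2$ and the zero row sums of $\mathcal{L}_h$, whereas the paper simply asserts this ``by the definition of the matrix $\infty$-norm''; and (ii) you correctly single out the semi-implicit predictor~\eqref{eqs} as the one for which $\lVert\bar\phi^{n+1/2}\rVert_\infty\le\beta$ follows from Theorem~\ref{t2}, noting that the extrapolation~\eqref{eqss} only gives $2\beta$---a subtlety the paper passes over.
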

\begin{proof}
    From \eqref{eqCN2-1}, we get
    $$
    \phi ^{n+1}=D_1^{-1}\left[ D_2\phi ^n+2g( \bar{\phi}^{n+\frac{1}{2}},\bar{s}^{n+\frac{1}{2}} ) \left( f( \bar{\phi} ^{n+\frac{1}{2}} ) +\kappa \bar{\phi}^{n+\frac{1}{2}} \right) \right] 
    .
    $$
    Suppose $(\phi^n,s^n)$ is given and $\lVert \phi ^n \rVert _{\infty}\le \beta $ for 
    some $n$. By Corollary \ref{c1} and condition \eqref{ss}, 
    we have
    \begin{align*}
        \frac{2}{\tau}-\kappa g( \bar{\phi}^{n+\frac{1}{2}},\bar{s}^{n+\frac{1}{2}}) \ge \frac{2\varepsilon ^2}{h^2}>0.
    \end{align*} 
    Using Lemma \ref{l1}, so
    $$
    \lVert D_1^{-1} \rVert _{\infty}=\lVert \left[ \left( \frac{2}{\tau}+\kappa g( \bar{\phi}^{n+\frac{1}{2}},\bar{s}^{n+\frac{1}{2}} ) \right) I+\varepsilon ^2\mathcal{L}_h \right]^{-1} \rVert _{\infty}\le \left( \frac{2}{\tau}+\kappa g( \bar{\phi}^{n+\frac{1}{2}},\bar{s}^{n+\frac{1}{2}} ) \right) ^{-1}.
    $$ 
    According to the definition of the matrix $\infty-$norm, we have 
    $$
    \lVert D_2 \rVert _{\infty}=\lVert \left[ \left( \frac{2}{\tau}-\kappa g( \bar{\phi}^{n+\frac{1}{2}},\bar{s}^{n+\frac{1}{2}} ) \right) I-\varepsilon ^2\mathcal{L}_h \right] \rVert _{\infty}=\frac{2}{\tau}-\kappa g( \bar{\phi}^{n+\frac{1}{2}},\bar{s}^{n+\frac{1}{2}} ) .
    $$
    Since $\kappa \ge \lVert f' \rVert _{C\left[ -\beta ,\beta \right]}$ 
    and $\lVert \bar{\phi}^{n+\frac{1}{2}} \rVert _{\infty}\le \beta $ 
    which can be obtained by the Theorem \ref{t1} and the expression \eqref{s1} or \eqref{ss1},
    according to Lemma \ref{l2}, we have 
    $$
    \lVert f( \bar{\phi}^{n+\frac{1}{2}} ) +\kappa \bar{\phi}^{n+\frac{1}{2}} \rVert _{\infty}\le \kappa \beta .
    $$
    Therefore, we obtain from \eqref{eqCN2-1} that
    $$
    \lVert \phi ^{n+1} \rVert _{\infty}\le \left[ \frac{2}{\tau}+\kappa g( \bar{\phi}^{n+\frac{1}{2}},\bar{s}^{n+\frac{1}{2}} ) \right] ^{-1}\left[ \left( \frac{2}{\tau}-\kappa g( \bar{\phi}^{n+\frac{1}{2}},\bar{s}^{n+\frac{1}{2}} ) \right) \beta +2g( \bar{\phi}^{n+\frac{1}{2}},\bar{s}^{n+\frac{1}{2}} ) \kappa \beta \right] =\beta .
    $$
    By induction, we have $\lVert \phi ^n \rVert _{\infty}\le \beta $ for all $n$.
    \end{proof}

Owing to $g( \bar{\phi}^{n+\frac{1}{2}},\bar{s}^{n+\frac{1}{2}} ) \approx 1$ 
in the numerical calculation, the time step only needs to meet 
$\tau \le \left( \frac{\kappa}{2}+\frac{\varepsilon ^2}{h^2} \right) ^{-1}$ 
in order to preserve the MBP.

\section{Numerical experiments}
This section is devoted to carry out various numerical experiments to 
test and compare the effectiveness and efficiency of the proposed 
sESAV1 \eqref{eq11} and sESAV2 \eqref{eq22} schemes, 
particularly on the numerical energy stability and MBP. 
We consider the NAC equationin \eqref{eqNAC} in two-dimensional 
spatial domain $\varOmega =(-1, 1) \times (-1, 1) $ 
equipped with periodic boundary conditions, 
so that the schemes can be solved efficiently by the fast solver.
We define the Gaussian kernel as follows \cite{5}
\begin{align}
    \label{J}
    J_{\delta}=\frac{4}{\pi ^{\frac{d}{2}}\delta ^{d+2}}e^{-\frac{|\mathbf{x|}^2}{\delta ^2}},\quad \mathbf{x}\in \varOmega \subseteq \mathbb{R}^2,\quad \delta >0,
\end{align}
and it satisfies that for any $\psi \in C^{\infty}\left( \varOmega \right) ,\mathbf{x}\in \varOmega ,\mathcal{L}_{\delta}\varphi \left( \mathbf{x} \right) \rightarrow -\varDelta \psi \left( \mathbf{x} \right) $ as $\delta \rightarrow 0$.
\par 
We take two types of commonly-used nonlinear functions $f(\phi)$. 
One is given by
\begin{align}
    f(\phi) = -F'(\phi) = \phi - \phi^3 \label{4-1}
\end{align}
with $F$ being the double-well potential
\begin{align*}
    F(\phi) = \frac{1}{4}(\phi^2-1)^2.
\end{align*}
In this case, $\beta = 1$ and $\lVert f' \rVert _{C\left[ -\beta ,\beta \right]}=2$.\par
The other one is the Flory-Huggins potential
$$
F\left( \phi \right) =\frac{\theta}{2}\left[ \left( 1+\phi \right) \ln \left( 1+\phi \right) +\left( 1-\phi \right) \ln \left( 1-\phi \right) \right] -\frac{\theta _c}{2}\phi ^2,
$$
which gives
\begin{align}
    \label{4-2}
    f\left( \phi \right) =-F'\left( \phi \right) =\frac{\theta}{2}\ln \frac{\left( 1-\phi \right)}{\left( 1+\phi \right)}+\theta _c\phi ,
\end{align}
where $\theta _c >\theta  > 0$, so we set $\theta  = 0.8$, $\theta _c = 1.6$.
Then the positive root of $f(\rho ) = 0 $ gives $\beta \approx 0.9575$, 
and $\lVert f' \rVert _{C\left[ -\beta ,\beta \right]}\approx 8.02$.
A point to note here is that when the absolute difference between 
the modified discrete energies 
$\left| \bar{E}_h\left( \phi ^n \right) -\bar{E}_h\left( \phi ^{n-1} \right) \right|,n\ge 1$ 
at two consecutive times is less than the error value $1e-8$, 
we consider that the phase transition 
process of this experiment has reached a steady state.\par
\textbf{Example 1} (Convergence in time) 
We consider the NAC equation equipped 
with the Gaussian kernel $J_\delta $ \eqref{J},
and the related parameters are $ T=1, N=2^7 ,
\varepsilon  =0.05, \delta = 0.05$.
In addition, we set $\kappa=2, \beta=1 $ with 
the double-well potential \eqref{4-1};
$\kappa=8.02, \beta=0.9575$ with the Flory-Huggins potential \eqref{4-2}, 
and the initial value is
$$
\phi _0\left( x,y \right) =0.5\cos \left( \pi x \right) \cos \left( \pi y \right). 
$$
It should be noted that we do not have exact solutions for comparison, 
so we calculate the reference solution $\phi_{ref}$ with $\tau=T/2^{15}$ 
by the second-order sESAV2 scheme for the error calculation.
In order to observe the temporal convergence rate,
by taking the linear refinement path $\tau =2^{-k}, k=4,5,...,10$, 
we use discrete $L^2$ norm between the 
difference ($\lVert \phi ^n-\phi _{ref} \rVert,n\ge 0$) 
to calculate the convergence rate for the sESAV1 and sESAV2 schemes.
Table \ref{T1} and Table \ref{T2} show the 
discrete $L^2$ norm error and temporal 
convergence accuracy for different $f(\phi)$ respectively. 
We can observe that the first-order sESAV1 scheme and 
the second-order sESAV2 scheme converge to 
the reference solution asymptotically with the expected accuracy.
\begin{table}[htp]
    \centering    
    \renewcommand\arraystretch{1.5}  
    \caption{the $L^2$ norm errors and temporal convergence rates of the sESAV1 and SESAV2 for the double-well potential case.}
    \label{T1}
    \begin{tabular}{lllllll}
        \toprule
        \multirow{2}{*}{$\tau$}   & \multicolumn{2}{l}{sESAV1} & \multicolumn{2}{l}{sESAV2}\\
        \cmidrule(lr){2-3}  \cmidrule(lr){4-5}
        & $Error$ & $Rate$ & $Error$ & $Rate$\\
        \midrule
        $T/2^5$  & 1.8938E-02 & - & 6.4040E-04 & - \\
        $T/2^6$  & 9.7621E-03 & 0.9560 & 1.6569E-04 & 1.9505 \\
        $T/2^7$  & 4.9574E-03 & 0.9776 & 4.2154E-05 & 1.9747 \\
        $T/2^8$  & 2.4982E-03 & 0.9887 & 1.0632E-05 & 1.9873 \\
        $T/2^9$  & 1.2540E-03 & 0.9943 & 2.6693E-06 & 1.9938 \\
        $T/2^{10}$ & 6.2824E-04 & 0.9972 & 6.6832E-07 & 1.9978 \\
        \bottomrule
        \end{tabular}
\end{table}

\begin{table}[htp]
    \centering    
    \renewcommand\arraystretch{1.5}  
    \caption{the $L^2$ norm errors and temporal convergence rates of the sESAV1 and SESAV2 for the Flory-Huggins potential case.}
    \label{T2}
    \begin{tabular}{lllllll}
        \toprule
        \multirow{2}{*}{$\tau$}   & \multicolumn{2}{l}{sESAV1} & \multicolumn{2}{l}{sESAV2}\\
        \cmidrule(lr){2-3}  \cmidrule(lr){4-5}
        & $Error$ & $Rate$ & $Error$ & $Rate$\\
        \midrule
        $T/2^5$  & 9.5718E-02 & - & 6.7652E-03 & - \\
        $T/2^6$  & 5.3575E-02 & 0.8372 & 1.8948E-03 & 1.8361 \\
        $T/2^7$  & 2.8455E-02 & 0.9129 & 5.0307E-04 & 1.9132 \\
        $T/2^8$  & 1.4679E-02 & 0.9549 & 1.2972E-04 & 1.9553  \\
        $T/2^9$ & 7.4568E-03 & 0.9771  & 3.2940E-05  & 1.9775  \\
        $T/2^{10}$ & 3.7584E-03 & 0.9885 & 8.2947E-06  & 1.9896 \\
        \bottomrule
        \end{tabular}
\end{table}

\textbf{Example 2} (Unconditional energy stable and MBP)
In the following numerical experiments, we mainly analyze and 
discuss the numerical energy stability and MBP for the NAC equation.
Here we solve a benchmark problem \cite{50} for the NAC equation 
by using the fast solver.
The related parameters are $N=2^7 ,\tau =0.01, \varepsilon  =0.02, \delta = 0.02$.
We consider $\beta=1,\kappa=2$ with the the double-well potential \eqref{4-1}, 
and $\beta=0.9575,\kappa=8.02$ with the the Flory-Huggins potential \eqref{4-2}.
In order to verify the energy stability of the numerical simulation, 
we choose to observe and compare the evolution of
the original discrete energy $E_h(\phi^n)$
and the modified discrete energy $\bar{E}_h(\phi^n,s^n)$.
The initial condition is
$$
\phi _0\left( x,y \right) =\tan\text{h}\left( \frac{R_1-\sqrt{\left( x-a \right) ^2+\left( y-b \right) ^2}}{\sqrt{2}\varepsilon} \right) -\tan\text{h}\left( \frac{R_2-\sqrt{\left( x-a \right) ^2+\left( y-b \right) ^2}}{\sqrt{2}\varepsilon} \right) -1,
$$
where $R_1=0.8,R_2=0.6, (a,b)=(0,0).$
We use the sESAV2 scheme to simulate the long-time phase field evolution process.
For the double-well potential case,
the snapshots of the phase structure captured at $t=5,200,400,500,600,800$ are 
shown in Figure \ref{F2_1}, and the steady state $\phi \equiv -1 $ at about $t=900$.
In Figure \ref{F2_1}, we observe 
the evolution of concentric circles under the action of their mean curvature.
The smaller circle has greater curvature and contracts faster than the larger one. 
When the smaller circle disappears, the larger circle will continue to 
contract until it finally disappears.
The above situation is consistent with the fact that the NAC equation 
does not preserve the conservation of mass.
Figure \ref{F2_2} shows the evolution of the numerical modified energy
and original energy, and it is clear from the figures that the two 
energy curves decreases monotonically with time.
As can be seen from the top row pictures of Figure \ref{F2_3},
the supremum norm of phase variable $\phi^n,n\ge 0 $
is always less than $1$ in the evolution process of a long time.
From the bottom figures, it can be observed that the minimum value of 
the phase variable is no less than $-1$ and the maximum value is 
no more than $1$ in the evolution process of a long time,
which means that the MBP is preserved.
For the Flory-Huggins potential case, 
the snapshots of the phase structure captured at $t=3,15,50,100,200,350$ are 
shown in Figure \ref{F2_4}, 
and the steady state $\phi \equiv -1 $ arises at about $t=910$.
Furthermore, the phase transition is similar to the double-well potential case.
We can observe that both the original energy and 
the modified energy decrease with time in Figure \ref{F2_5}. 
For MBP, we can observe from Figure \ref{F2_6} that 
the supremum norm of phase variable $\phi^n,n\ge 0 $
is always less than $0.9575$.

\begin{figure}[htp]
    \centering
    \subfloat[t=5]{\includegraphics[width=0.33\textwidth,height=0.33\textwidth]{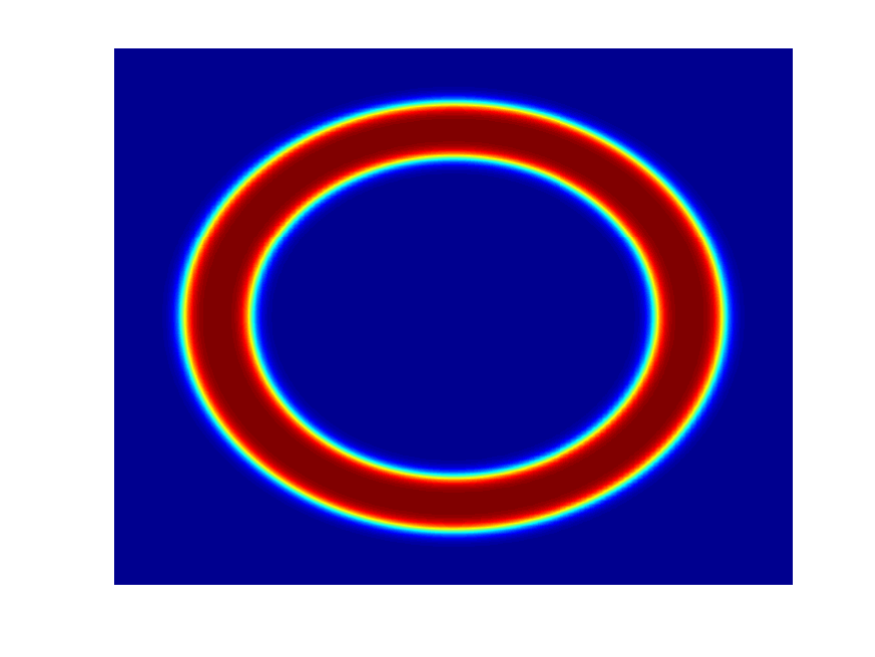}}
    \hfill
    \subfloat[t=200]{\includegraphics[width=0.33\textwidth,height=0.33\textwidth]{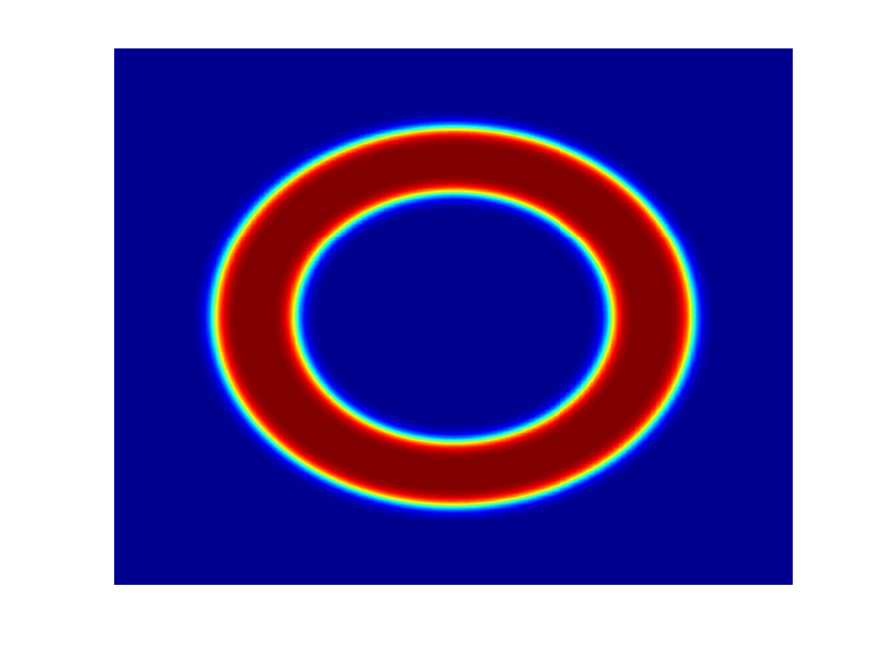}}
    \hfill
    \subfloat[t=400]{\includegraphics[width=0.33\textwidth,height=0.33\textwidth]{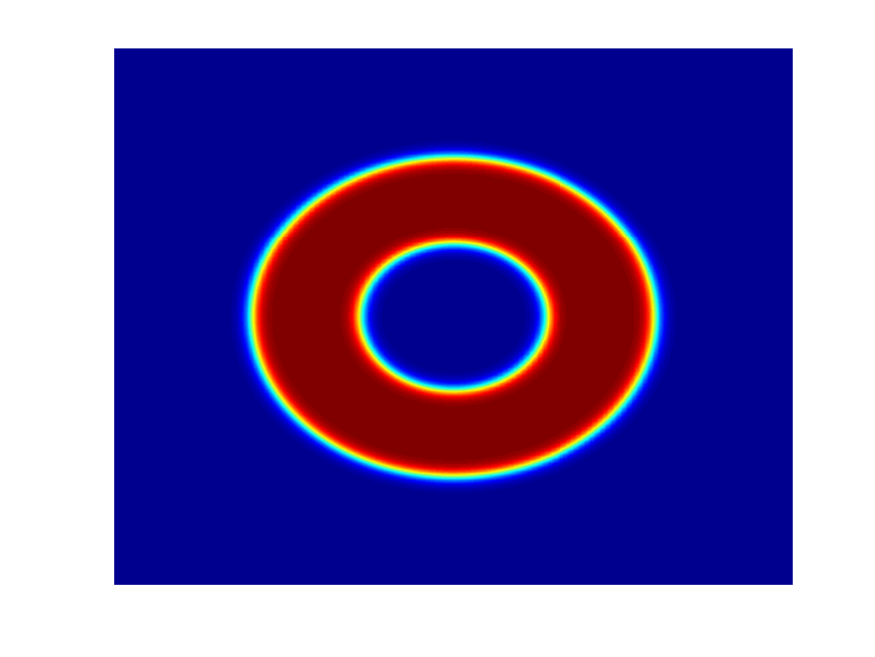}}
    \hfill
    \\
    \subfloat[t=500]{\includegraphics[width=0.33\textwidth,height=0.33\textwidth]{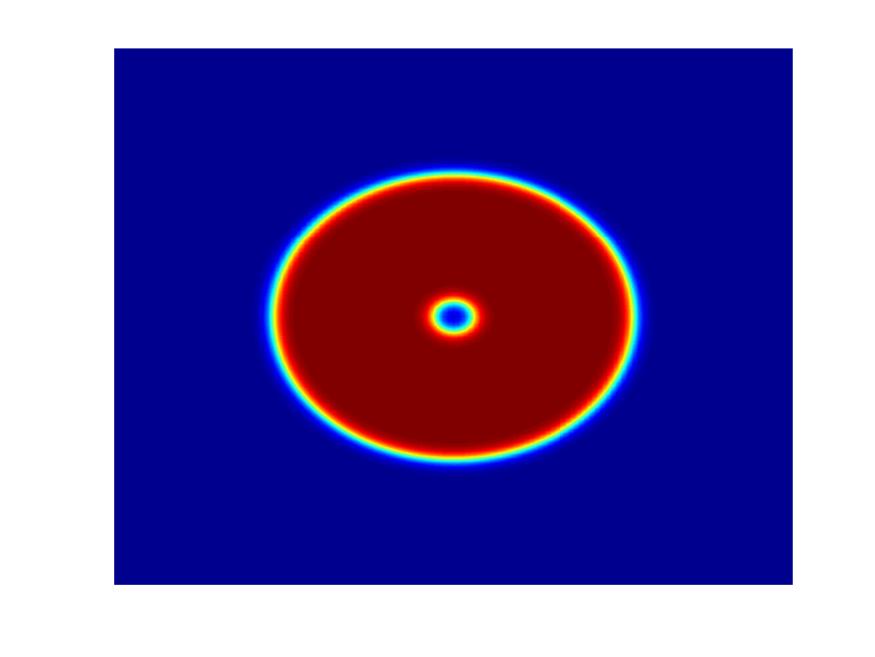}}
    \hfill
    \subfloat[t=600]{\includegraphics[width=0.33\textwidth,height=0.33\textwidth]{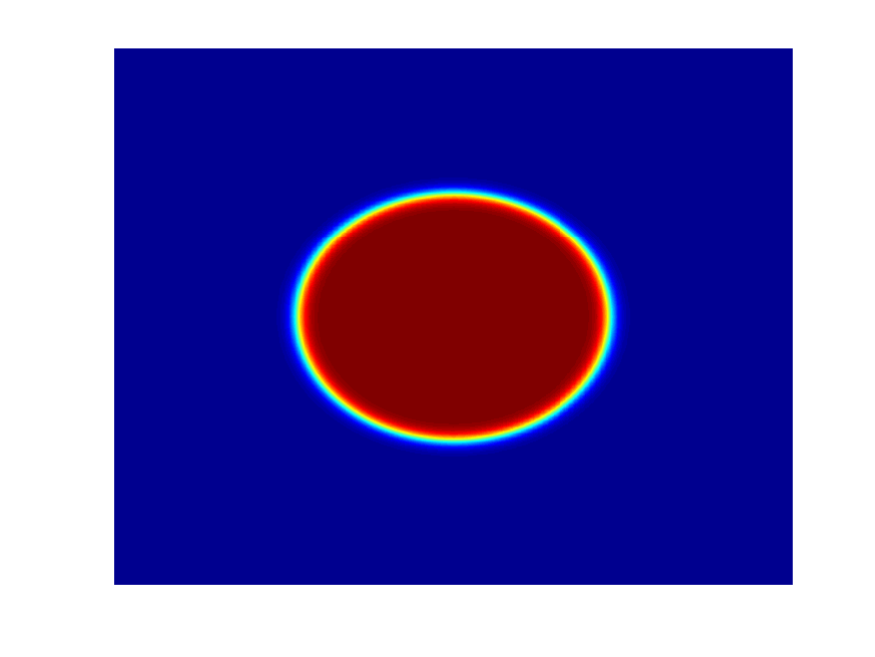}}
    \hfill
    \subfloat[t=800]{\includegraphics[width=0.33\textwidth,height=0.33\textwidth]{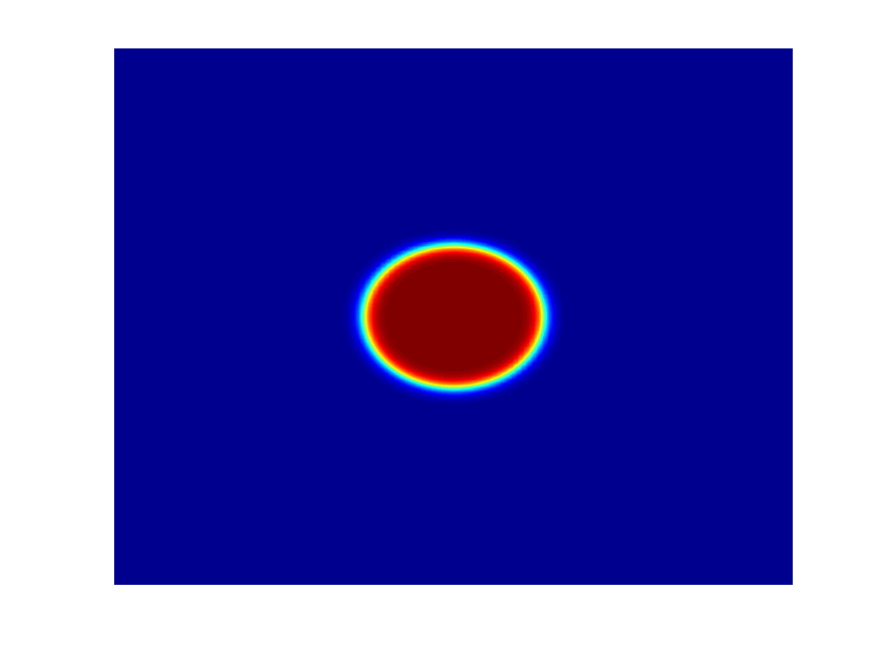}}
    \hfill
    \caption{Snapshots of the phase variable $\phi$ 
    are taken at $t=5,200,400,500,600,800$ with $\kappa =2,\beta=1$ 
    for the double-well potential case in \textbf{Example 2}}
    \label{F2_1}
    \hfill
\end{figure}

\begin{figure}[htp]
    \centering
    \subfloat{\includegraphics[scale=0.35]{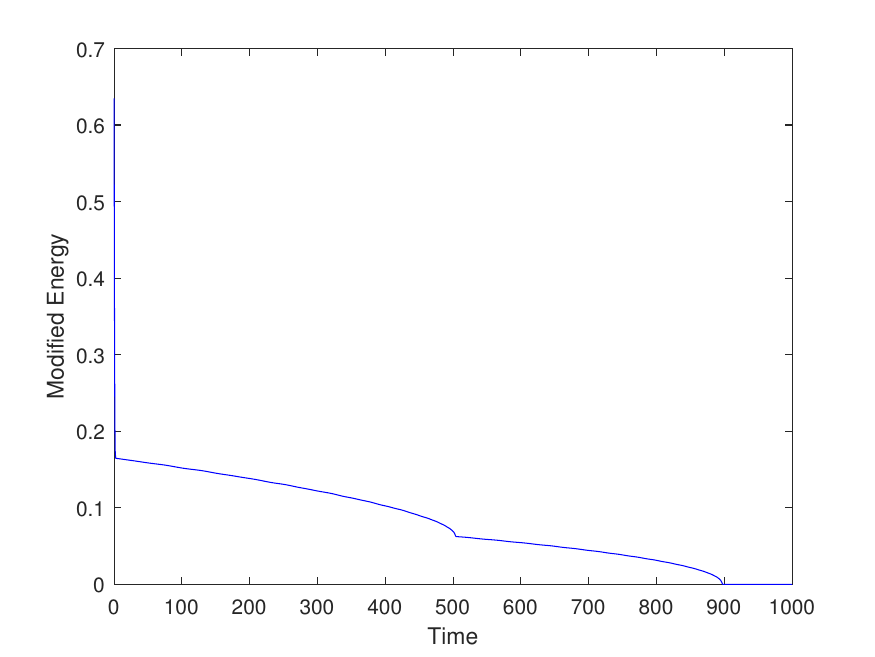}}
    \hfill
    \subfloat{\includegraphics[scale=0.35]{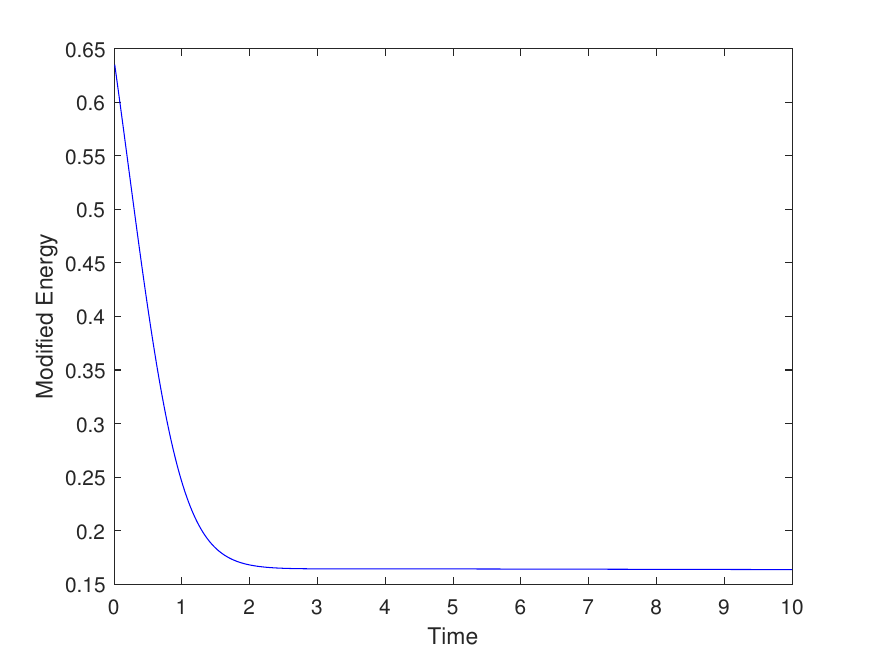}}
    \hfill
    \subfloat{\includegraphics[scale=0.35]{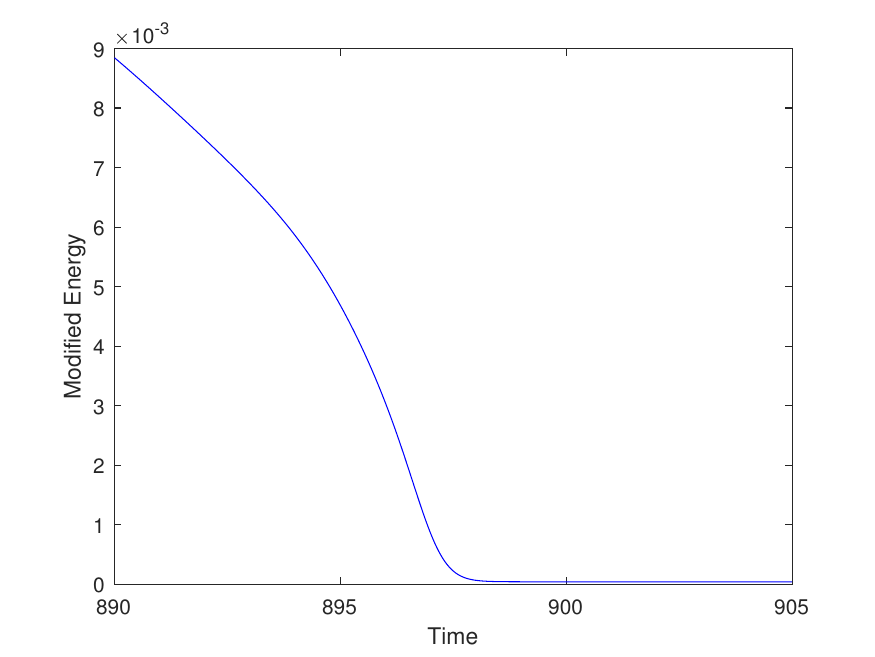}}
    \hfill
    \\
    \subfloat{\includegraphics[scale=0.35]{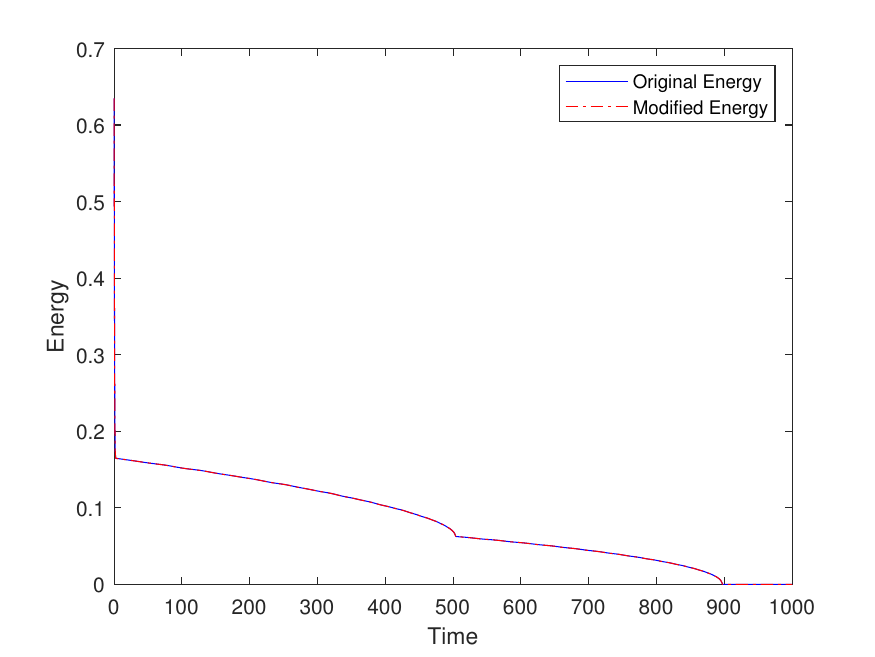}}
    \hfill
    \subfloat{\includegraphics[scale=0.35]{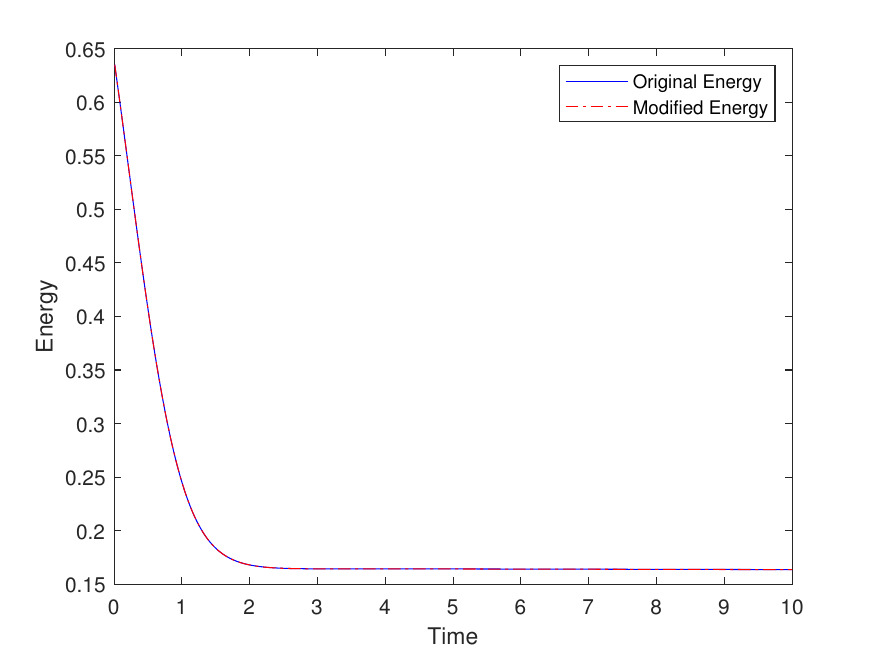}}
    \hfill
    \subfloat{\includegraphics[scale=0.35]{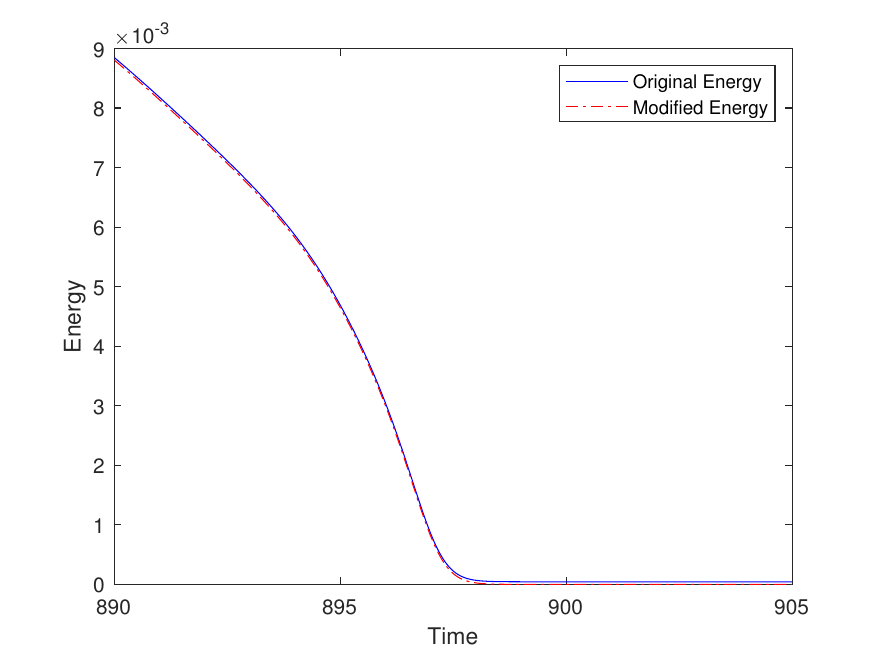}}
    \hfill
    \caption{Evolutions of the modified and original free 
    energy functional for the double-well potential case in \textbf{Example 2}}
    \label{F2_2}
    \hfill
\end{figure}

\begin{figure}[htp]
    \centering
    \subfloat{\includegraphics[scale=0.52]{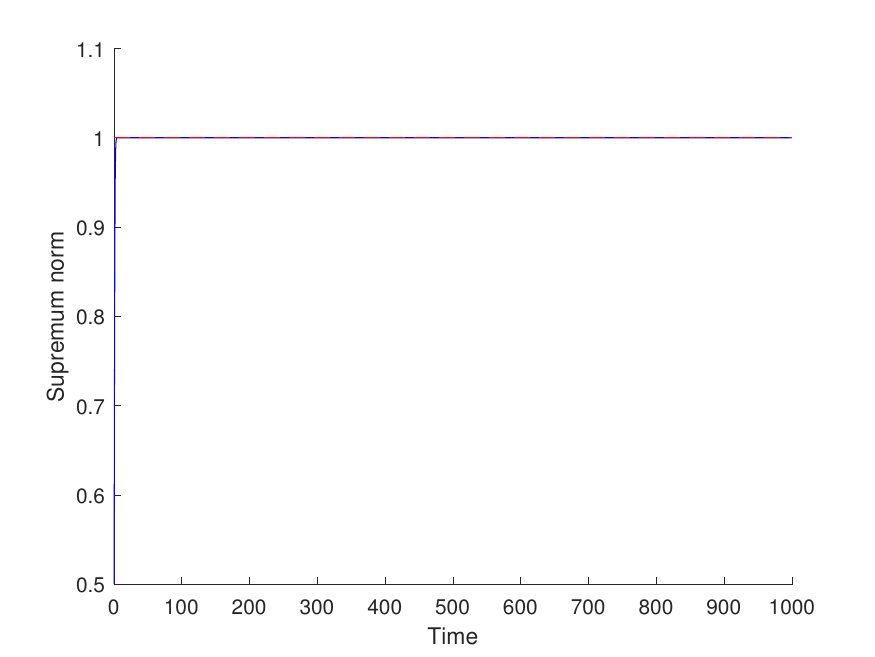}}
    \hfill
    \subfloat{\includegraphics[scale=0.52]{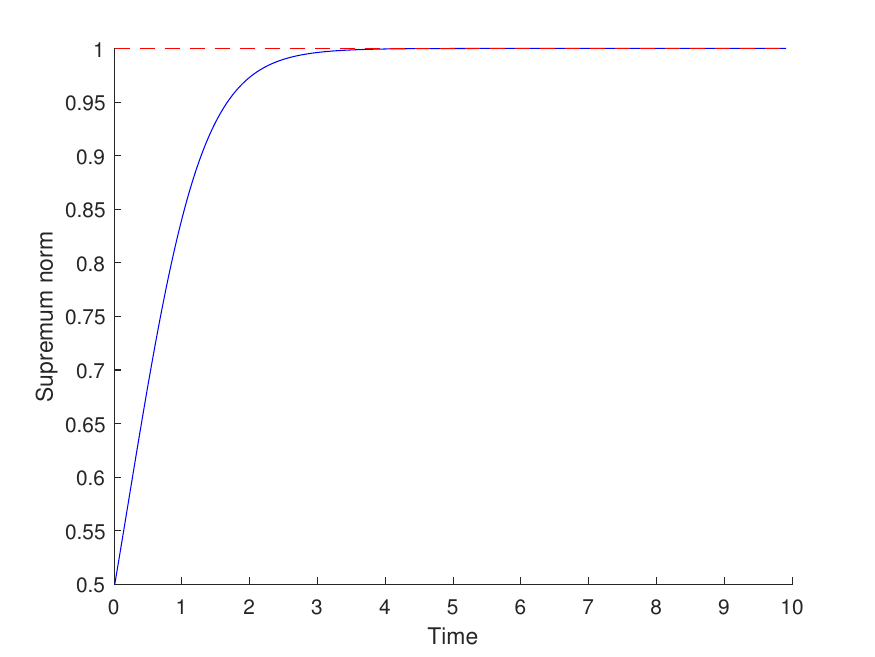}}
    \hfill
    \\
    \subfloat{\includegraphics[scale=0.52]{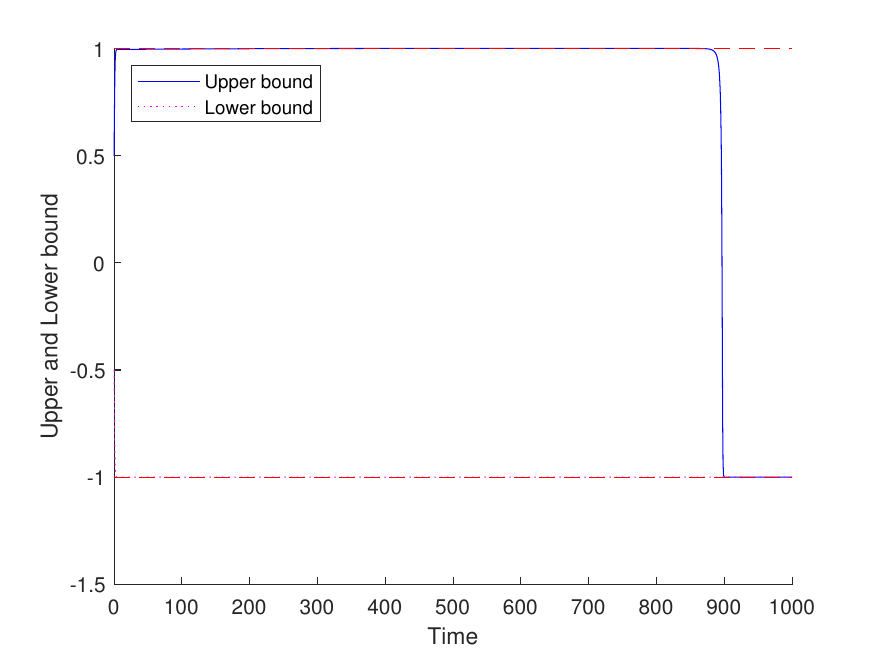}}
    \hfill
    \subfloat{\includegraphics[scale=0.52]{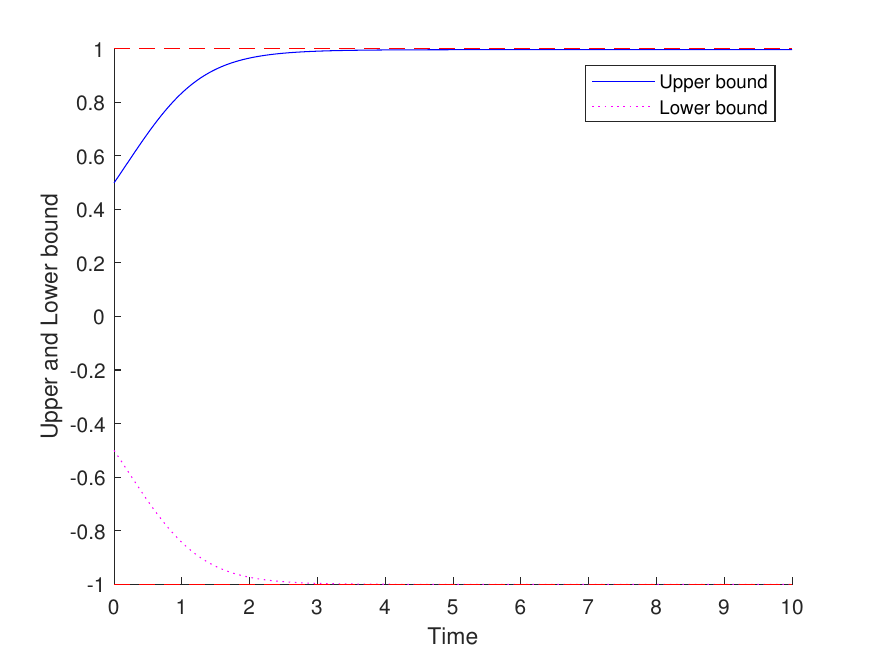}}
    \caption{Evolutions of the supremum norm (top row), 
    the upper and lower bound (bottom row) of the phase variable $\phi$
    for the double-well potential in \textbf{Example 2}}
    \label{F2_3}
    \hfill
\end{figure}

\begin{figure}[htp]
    \centering
    \subfloat[t=5]{\includegraphics[width=0.33\textwidth,height=0.33\textwidth]{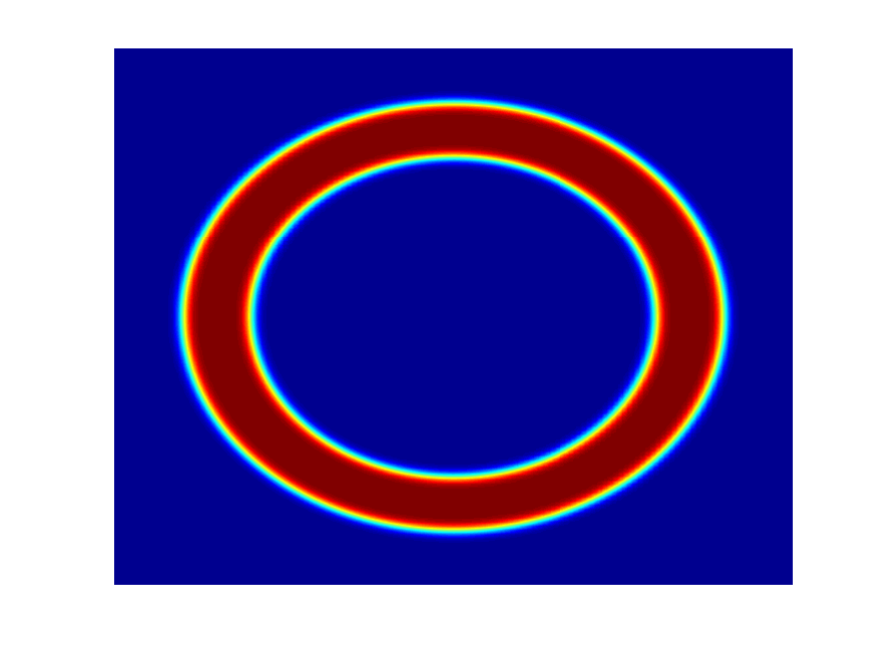}}
    \hfill
    \subfloat[t=200]{\includegraphics[width=0.33\textwidth,height=0.33\textwidth]{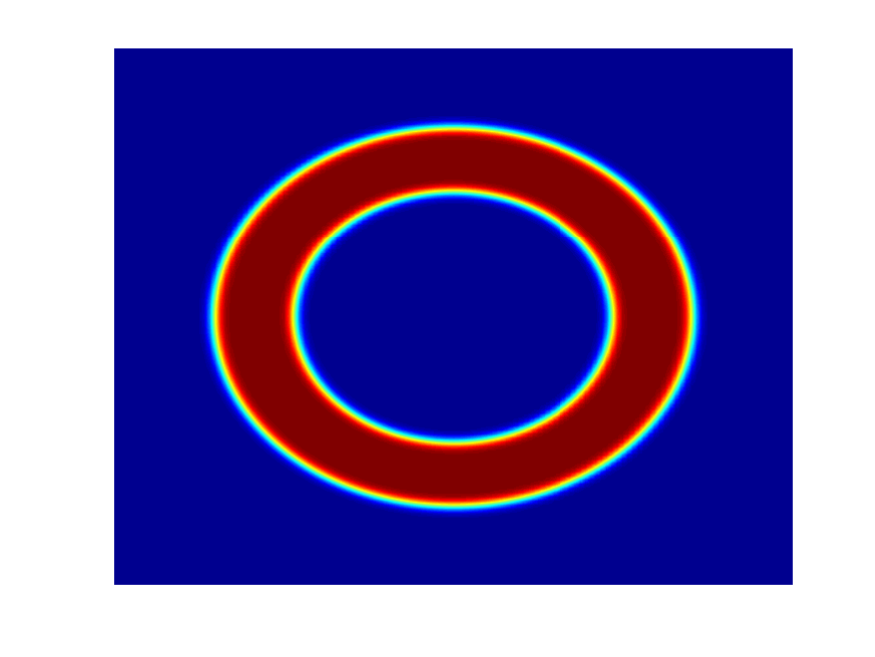}}
    \hfill
    \subfloat[t=400]{\includegraphics[width=0.33\textwidth,height=0.33\textwidth]{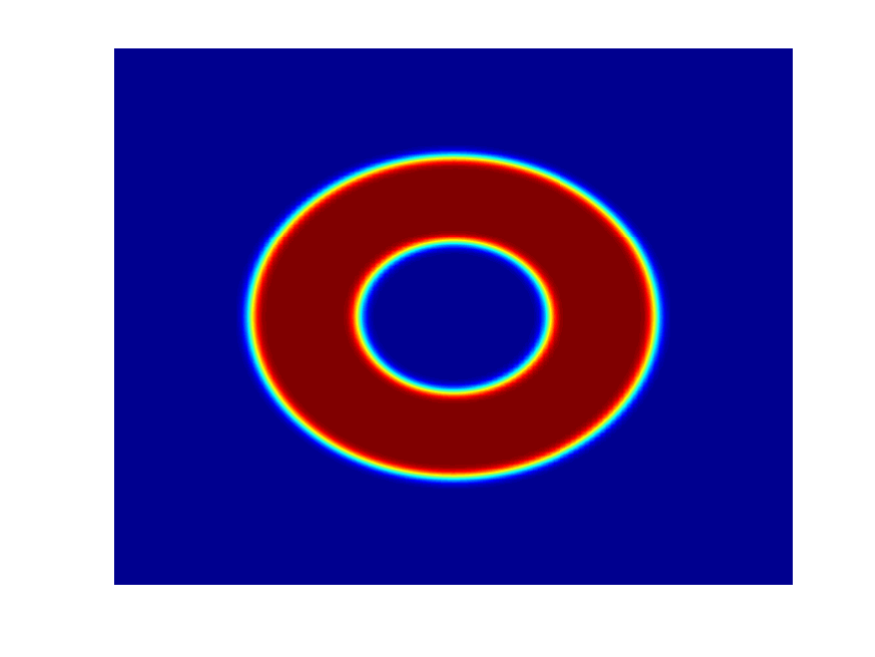}}
    \hfill
    \\
    \subfloat[t=500]{\includegraphics[width=0.33\textwidth,height=0.33\textwidth]{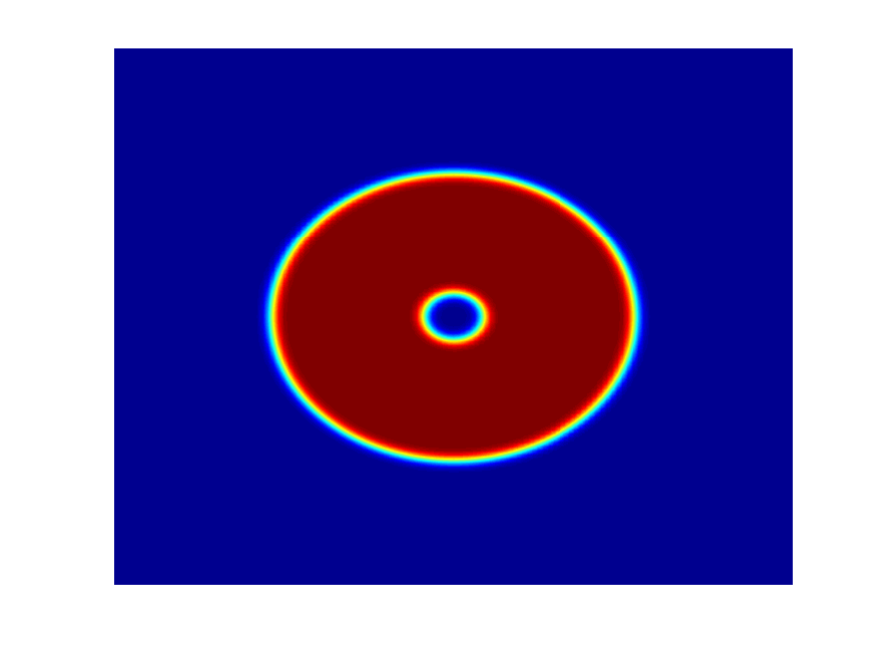}}
    \hfill
    \subfloat[t=600]{\includegraphics[width=0.33\textwidth,height=0.33\textwidth]{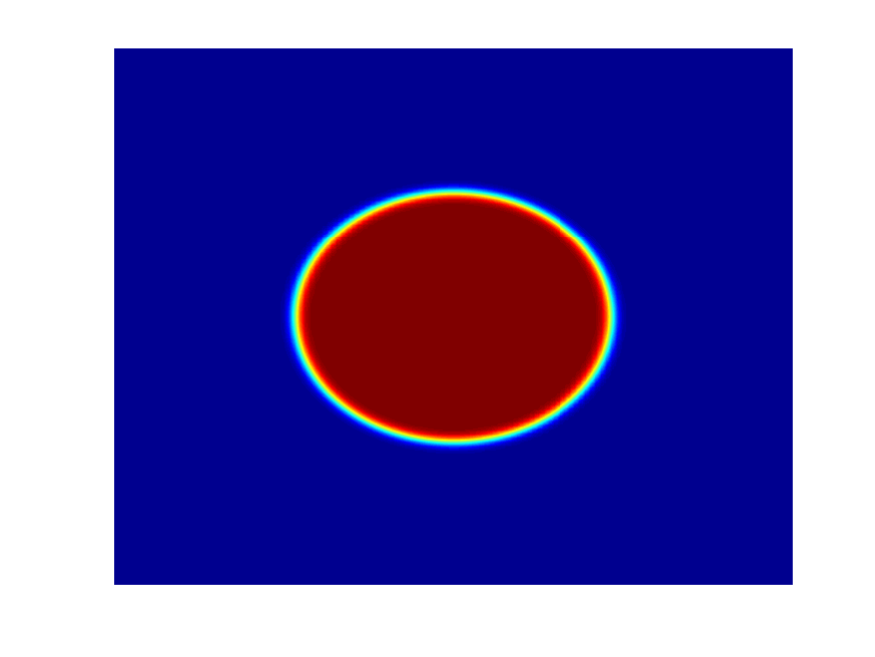}}
    \hfill
    \subfloat[t=800]{\includegraphics[width=0.33\textwidth,height=0.33\textwidth]{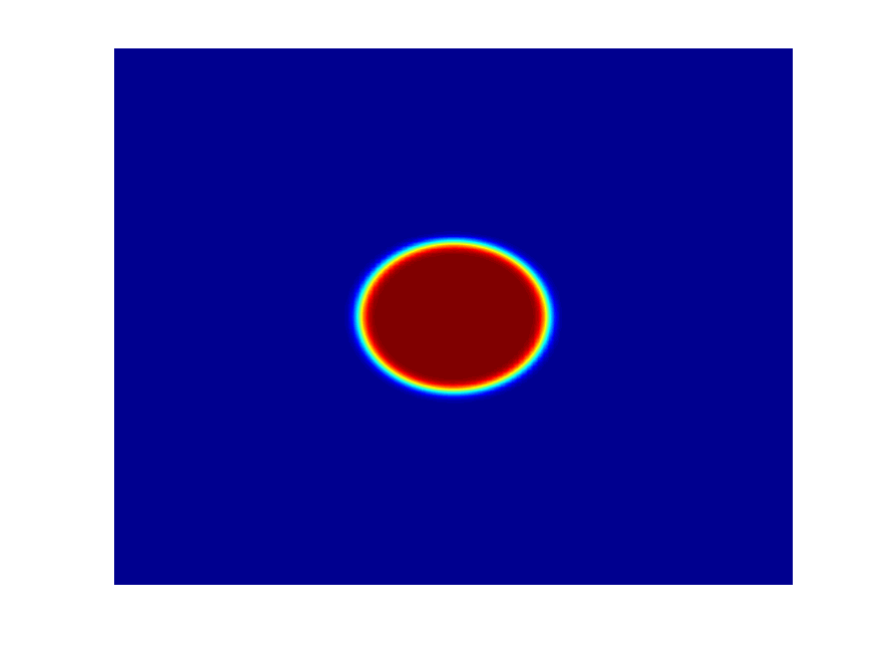}}
    \hfill
    \caption{Snapshots of the phase variable $\phi$ 
    are taken at $t=5,200,400,500,600,800$ with $\kappa =8.02,\beta=0.9575$ 
    for the Flory-Huggins potential case in \textbf{Example 2}}
    \label{F2_4}
    \hfill
\end{figure}

\begin{figure}[htp]
    \centering
    \subfloat{\includegraphics[scale=0.35]{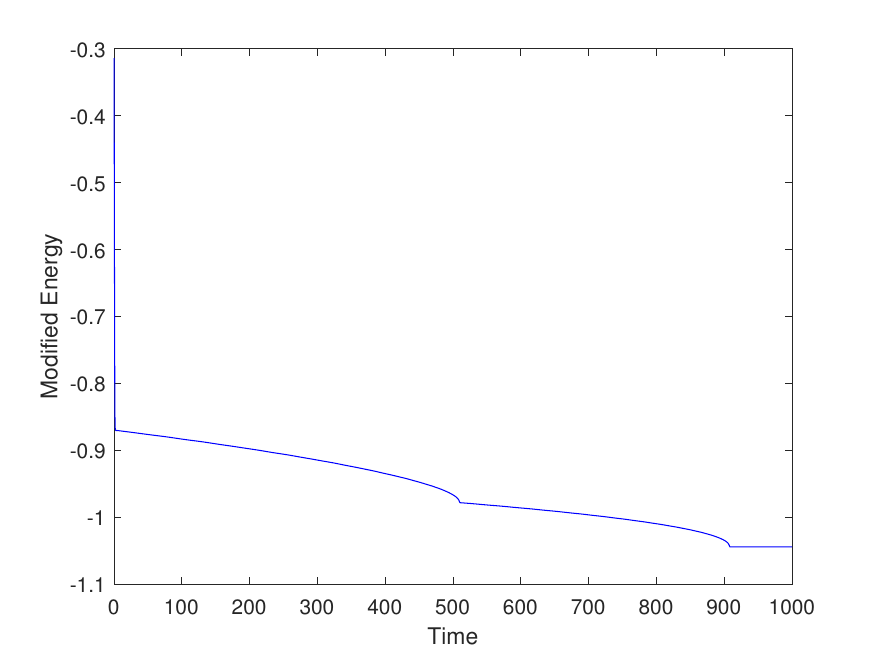}}
    \hfill
    \subfloat{\includegraphics[scale=0.35]{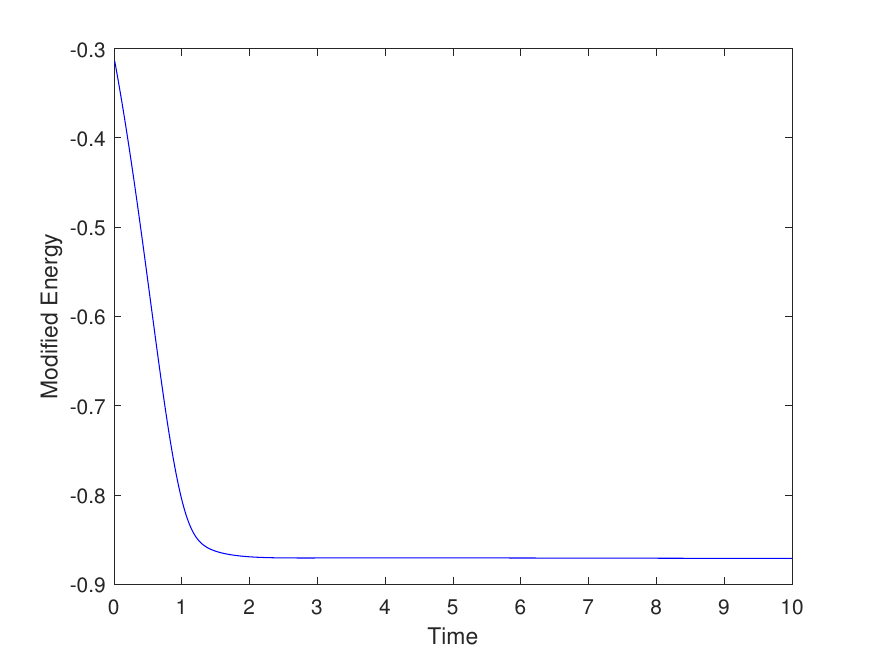}}
    \hfill
    \subfloat{\includegraphics[scale=0.35]{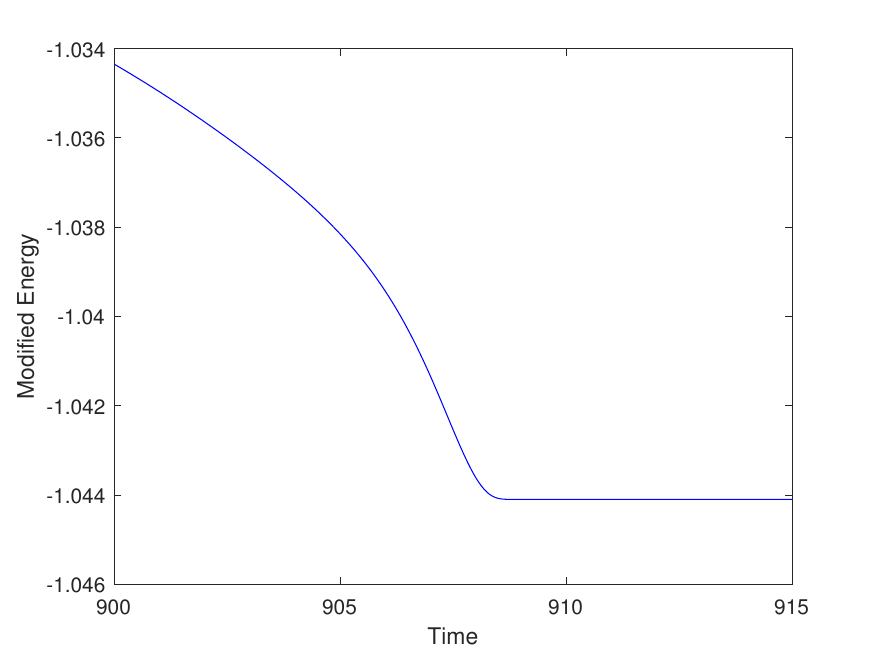}}
    \hfill
    \\
    \subfloat{\includegraphics[scale=0.35]{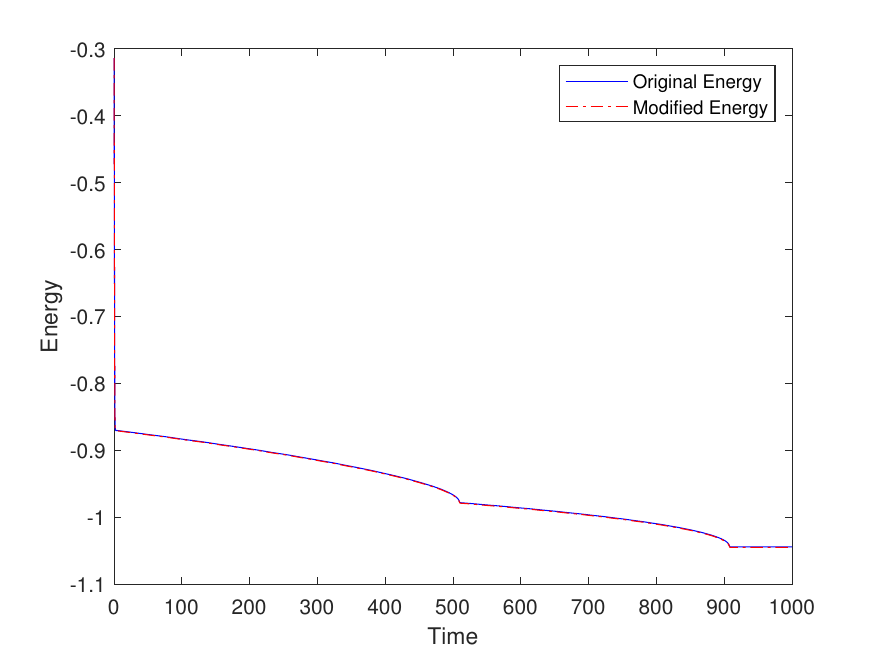}}
    \hfill
    \subfloat{\includegraphics[scale=0.35]{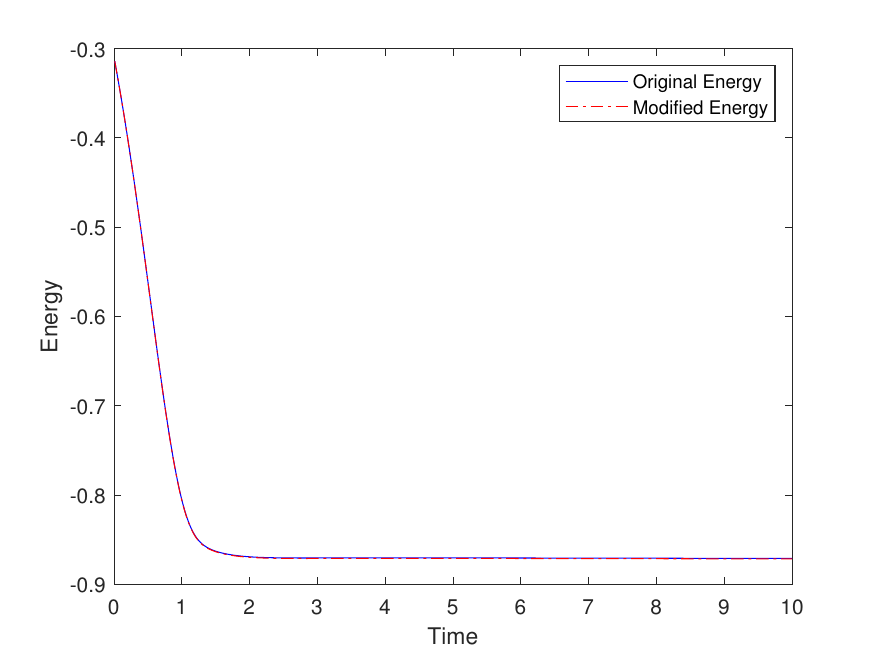}}
    \hfill
    \subfloat{\includegraphics[scale=0.35]{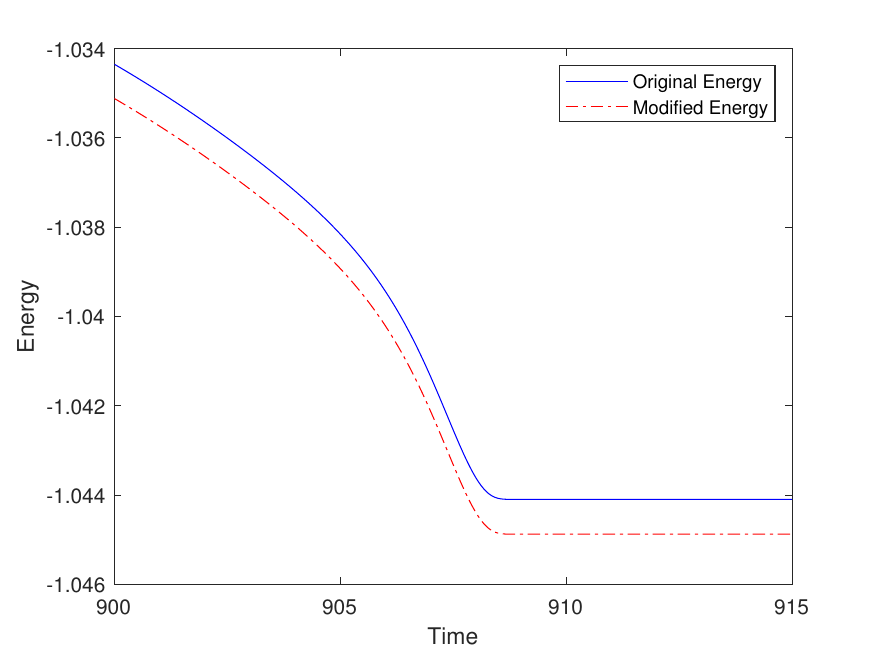}}
    \hfill
    \caption{Evolutions of the modified and original 
    free energy functional for the Flory-Huggins potential case in \textbf{Example 2}}
    \label{F2_5}
    \hfill
\end{figure}

\begin{figure}[htp]
    \centering
    \subfloat{\includegraphics[scale=0.52]{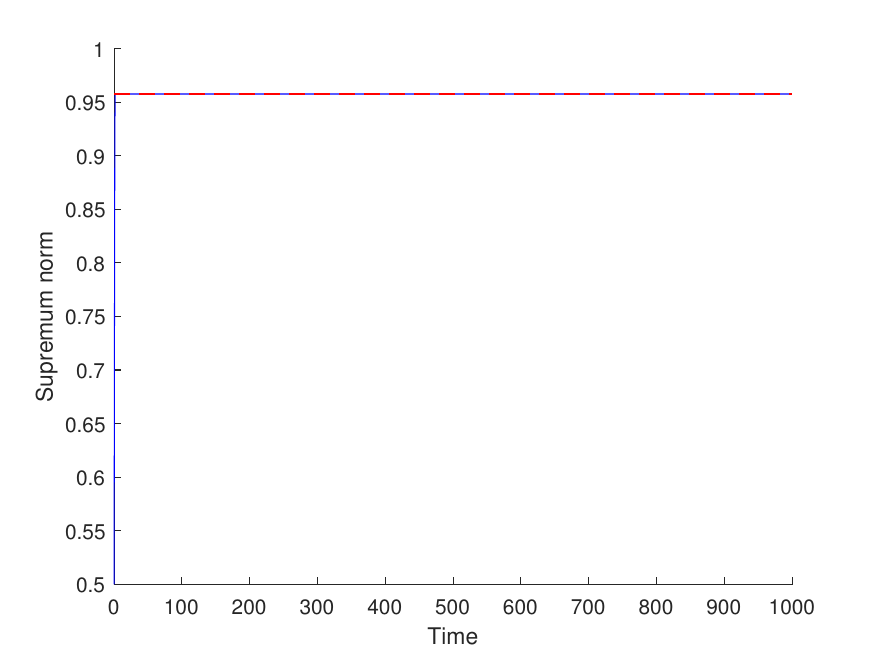}}
    \hfill
    \subfloat{\includegraphics[scale=0.52]{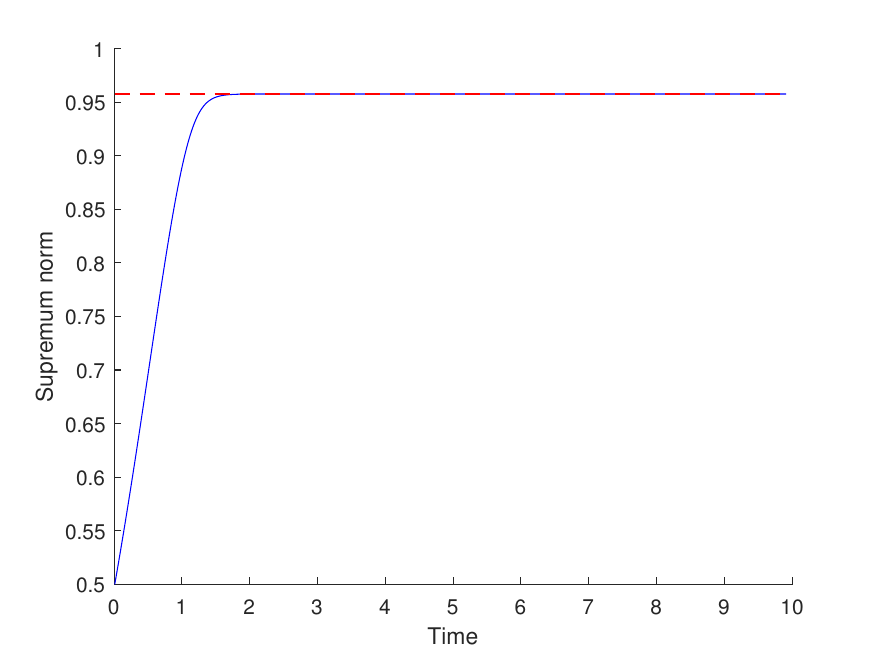}}
    \hfill
    \\
    \subfloat{\includegraphics[scale=0.52]{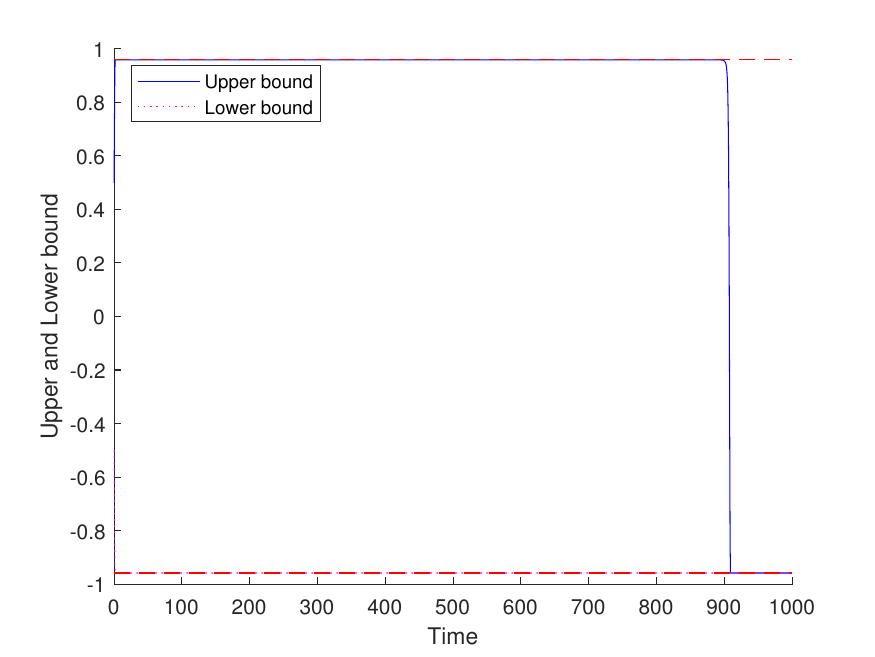}}
    \hfill
    \subfloat{\includegraphics[scale=0.52]{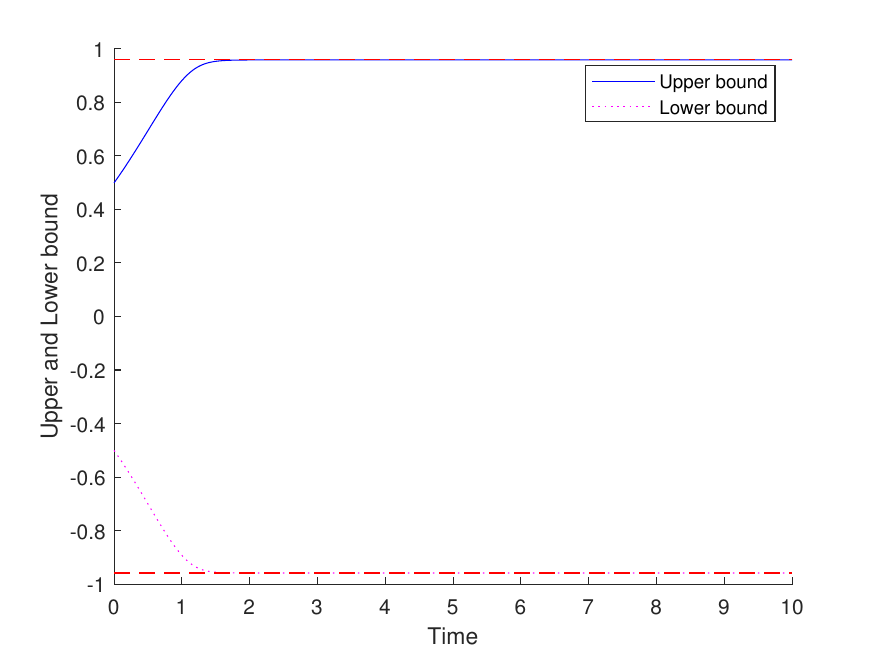}}
    \caption{Evolutions of the supremum norm (top row), 
    the upper and lower bound (bottom row) 
    of the phase variable $\phi$
    for the Flory-Huggins potential
     in \textbf{Example 2}}
    \label{F2_6}
    \hfill
\end{figure}

\textbf{Example 3}(Long-time coarsening dynamics sumulations)
Now we discuss the coarsening dynamics driven by the NAC equation \eqref{eqNAC}
with $N=2^7,\tau=0.01,\varepsilon = 0.02,\delta =0.02.$
The initial condition is given by 
$$
\phi \left( x,y,0 \right) =0.5rand(x,y),  
$$
where $rand(x,y)$ is a random number in $[-0.5,0.5]$ with zero mean.
We use the sESAV2 scheme to simulate the long-time coarsening process.
\par
For the double-well potential case \eqref{4-1}, we take $\kappa=2,\beta=1$, 
the snapshots of the phase structure captured at $t=0,6,16,50,350,700$ are 
shown in Figure \ref{F3_1}, 
and the steady state $\phi \equiv -1 $ arises at about $t=1855$.
The preservation of MBP during the whole phase transformation process can be 
observed from Figure \ref{F3_3}. The evolution of energy is plotted in 
Figure \ref{F3_2}, illustrating that this process maintains 
energy dissipation.
\par
We take $\kappa=8.02,\beta=0.9575$ for the Flory-Huggins potential case \eqref{4-2}
and the numerical results are presented in Figure \ref{F3_4}. 
From Ficture \ref{F3_5}, we can see that the model reaches a steady state at about $t = 1866$, 
and the whole phase separation progress as in the case of the double-well potential 
situation. 
To conclude, these results are almost identical to those of \cite{10,43},
and we find that We also observe that the phase residue $\phi$ 
corresponding to the NAC models evolves more slowly than the classical Allen-Cahn equation.

\begin{figure}[htp]
    \centering
    \subfloat[t=0]{\includegraphics[width=0.33\textwidth,height=0.33\textwidth]{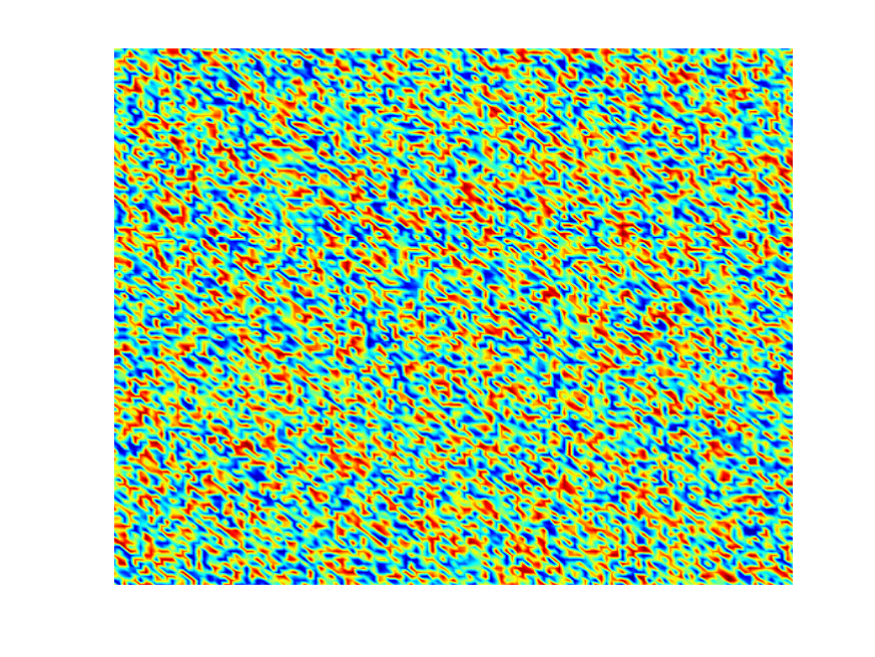}}
    \hfill
    \subfloat[t=6]{\includegraphics[width=0.33\textwidth,height=0.33\textwidth]{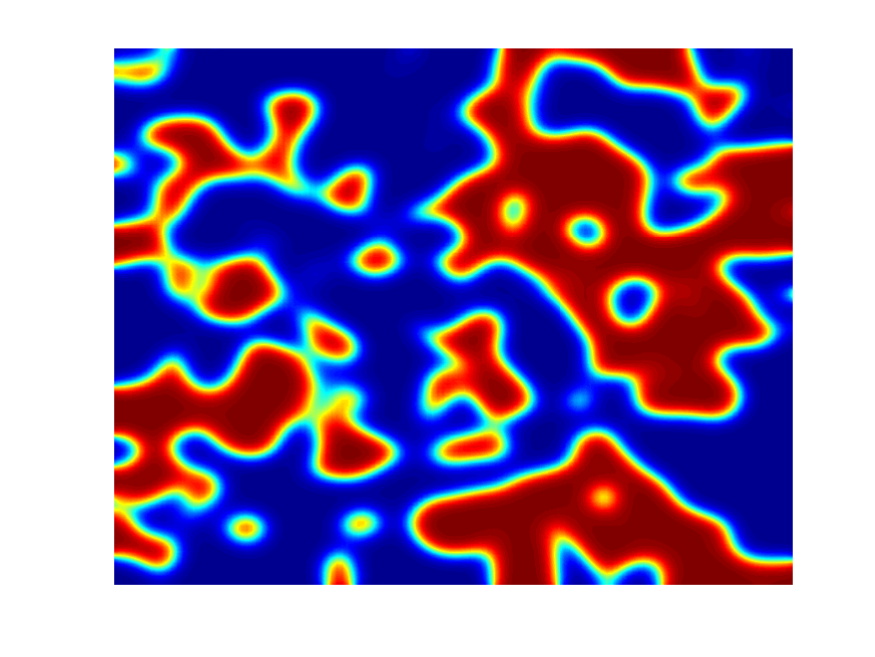}}
    \hfill
    \subfloat[t=16]{\includegraphics[width=0.33\textwidth,height=0.33\textwidth]{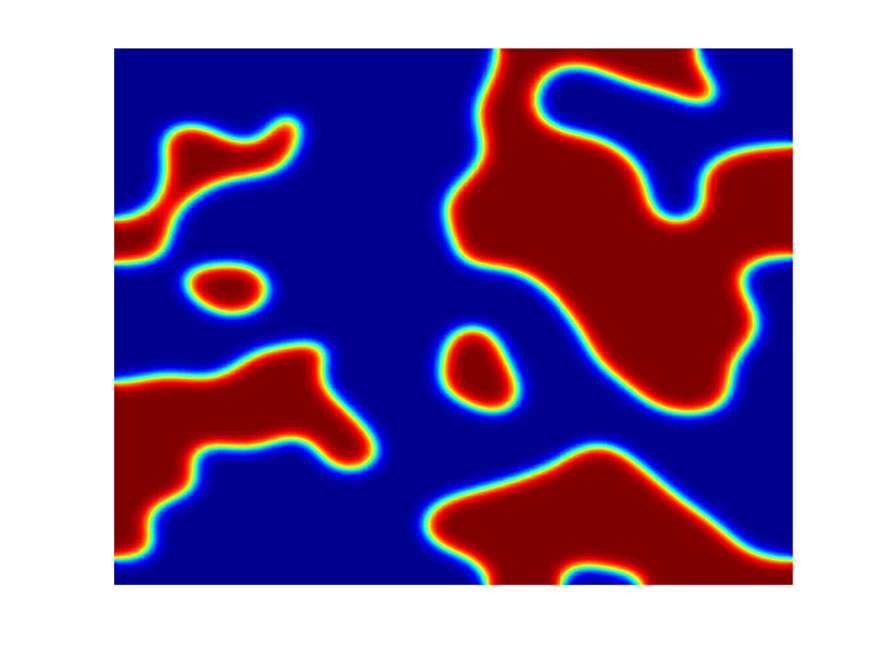}}
    \hfill
    \\
    \subfloat[t=50]{\includegraphics[width=0.33\textwidth,height=0.33\textwidth]{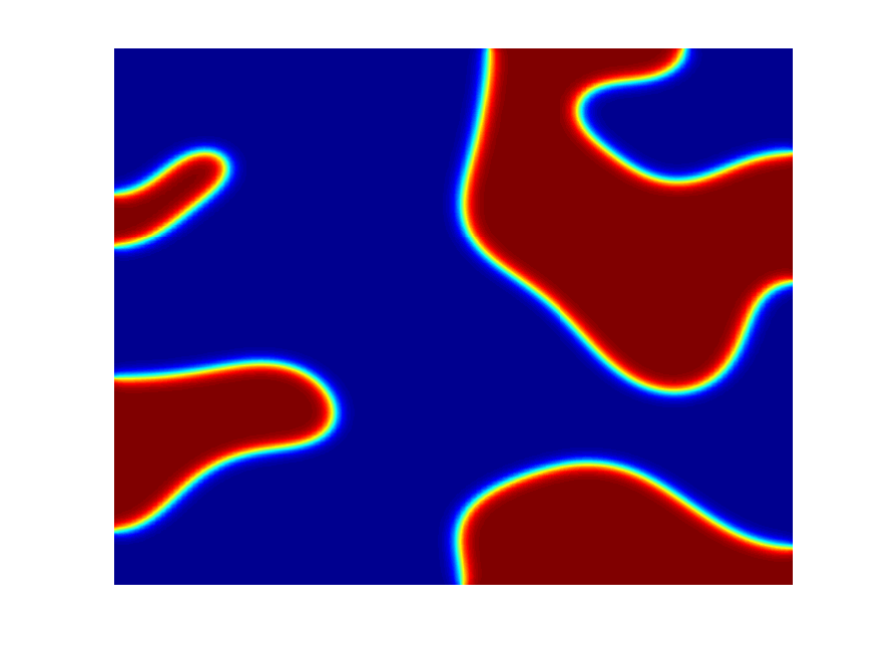}}
    \hfill
    \subfloat[t=350]{\includegraphics[width=0.33\textwidth,height=0.33\textwidth]{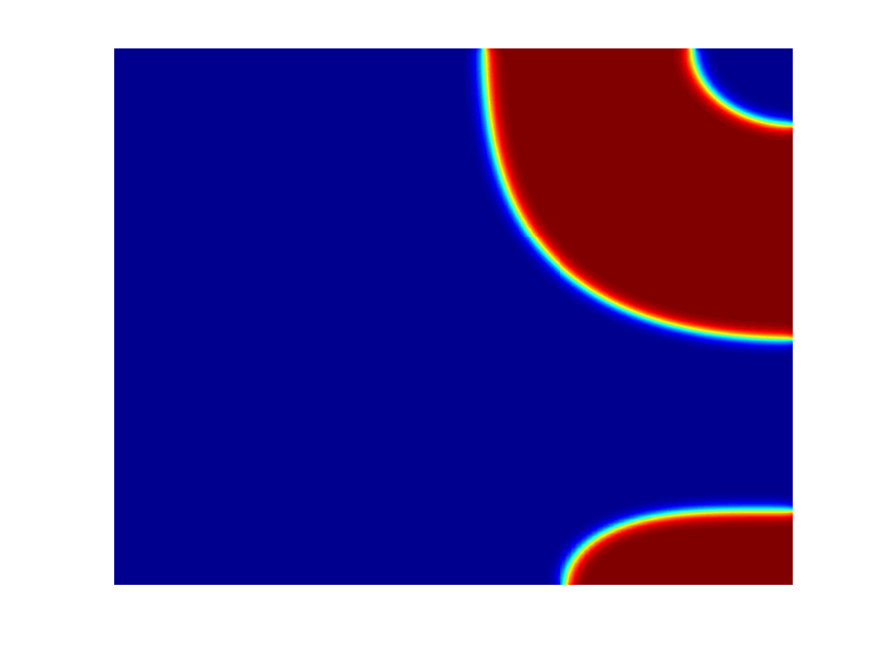}}
    \hfill
    \subfloat[t=700]{\includegraphics[width=0.33\textwidth,height=0.33\textwidth]{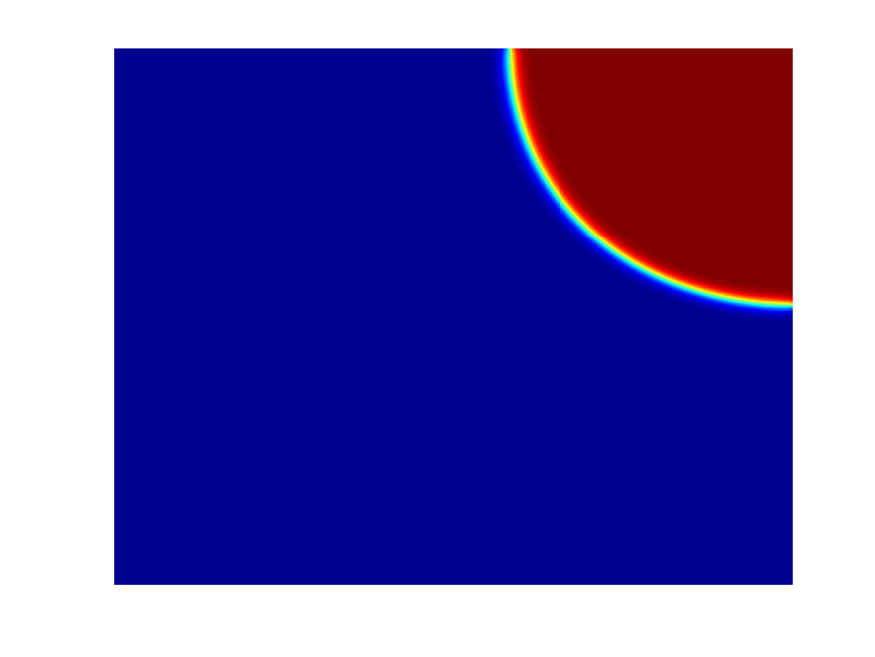}}
    \hfill
    \caption{Snapshots of the phase variable $\phi$ 
    are taken at $t=0,6,16,50,350,700$ with $\kappa =2,\beta=1$ 
    for the double-well potential case in \textbf{Example 3}}
    \label{F3_1}
    \hfill
\end{figure}

\begin{figure}[htp]
    \centering
    \subfloat{\includegraphics[scale=0.35]{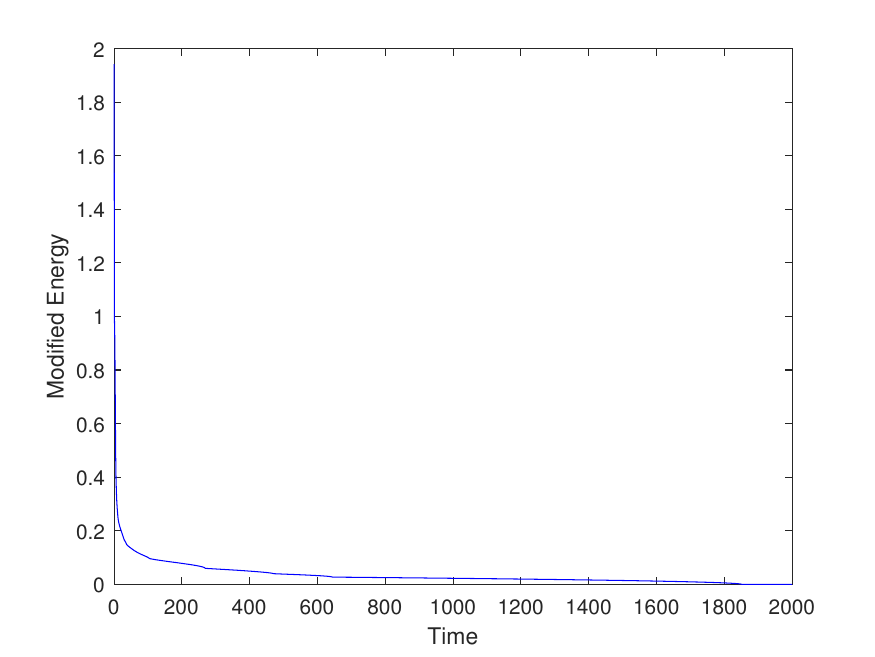}}
    \hfill
    \subfloat{\includegraphics[scale=0.35]{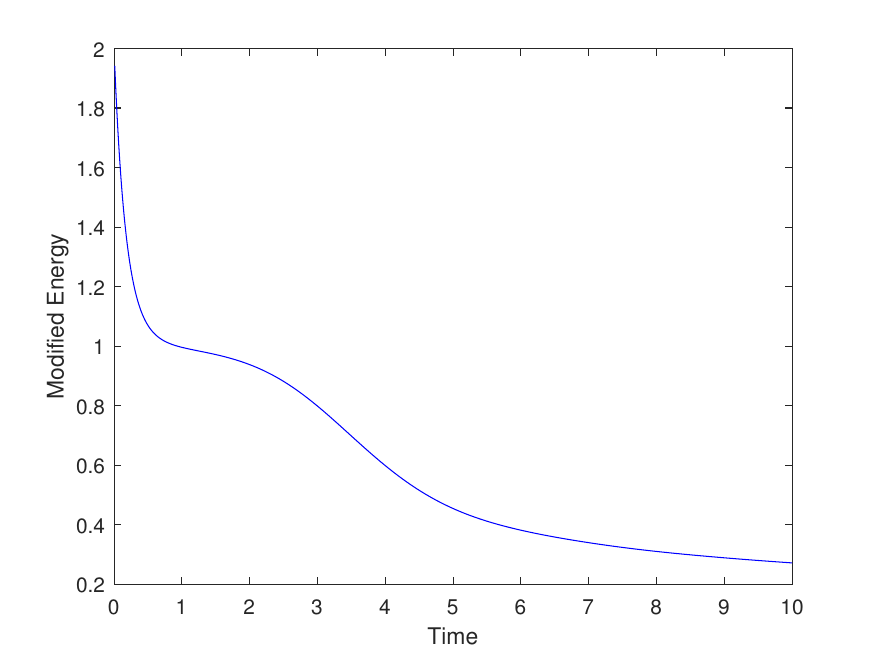}}
    \hfill
    \subfloat{\includegraphics[scale=0.35]{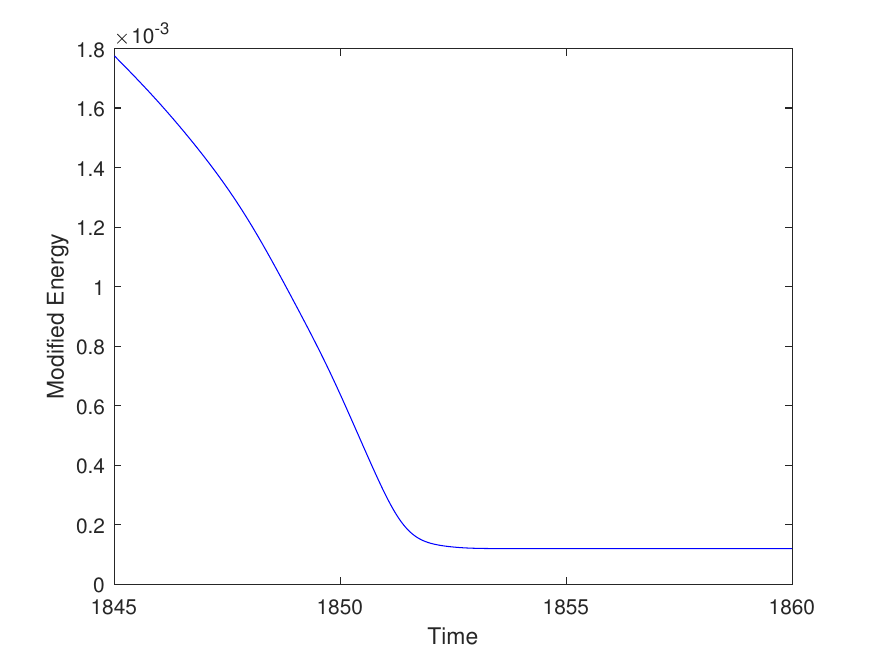}}
    \hfill
    \\
    \subfloat{\includegraphics[scale=0.35]{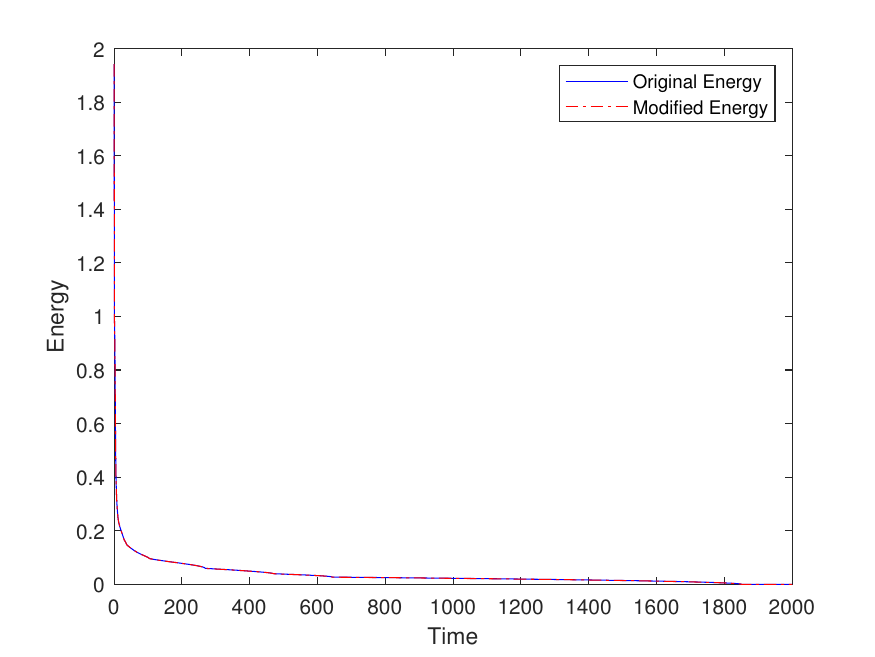}}
    \hfill
    \subfloat{\includegraphics[scale=0.35]{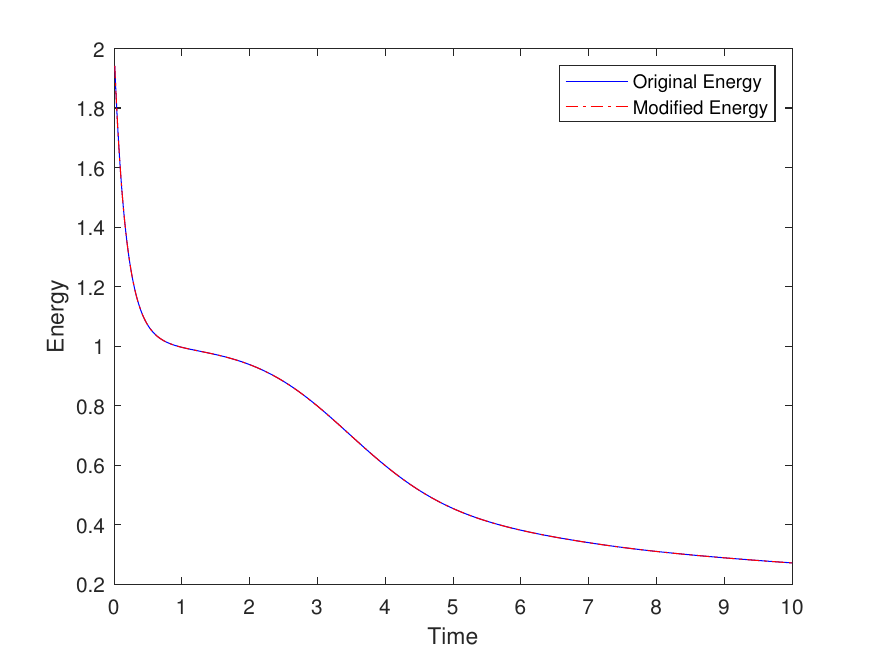}}
    \hfill
    \subfloat{\includegraphics[scale=0.35]{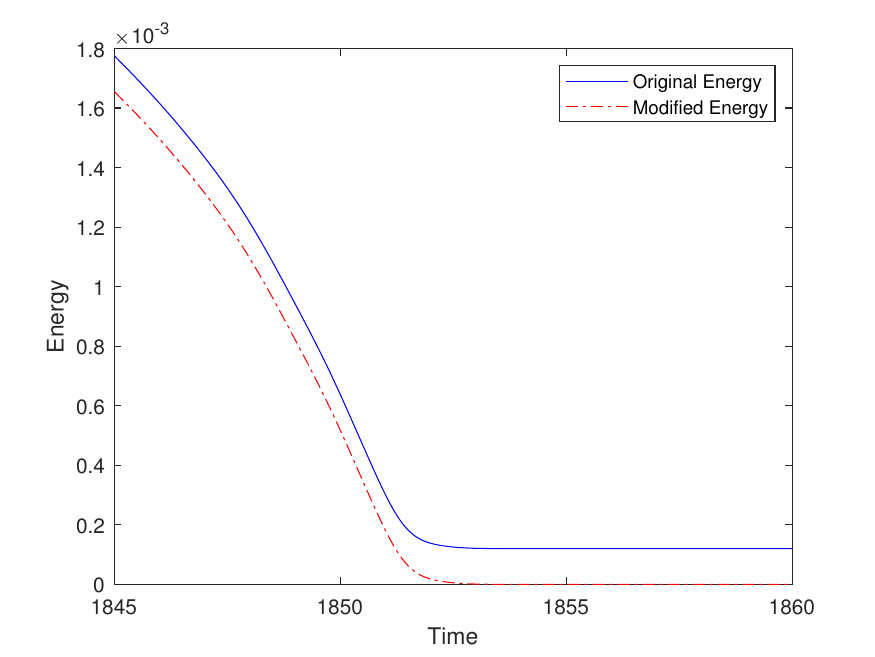}}
    \hfill
    \caption{Evolutions of the modified and original free energy 
    functional for the double-well potential case in \textbf{Example 3}}
    \label{F3_2}
    \hfill
\end{figure}

\begin{figure}[htp]
    \centering
    \subfloat{\includegraphics[scale=0.52]{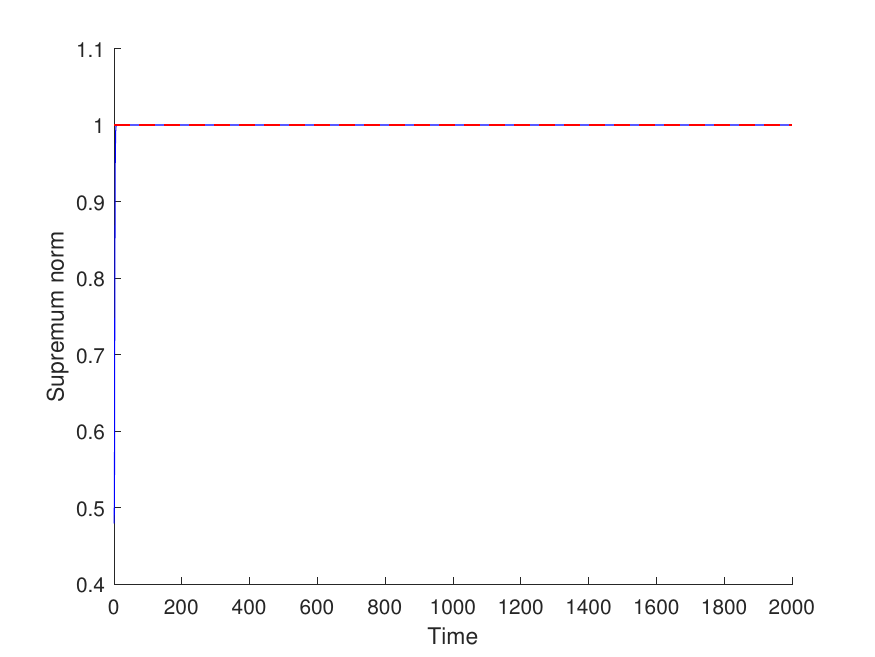}}
    \hfill
    \subfloat{\includegraphics[scale=0.52]{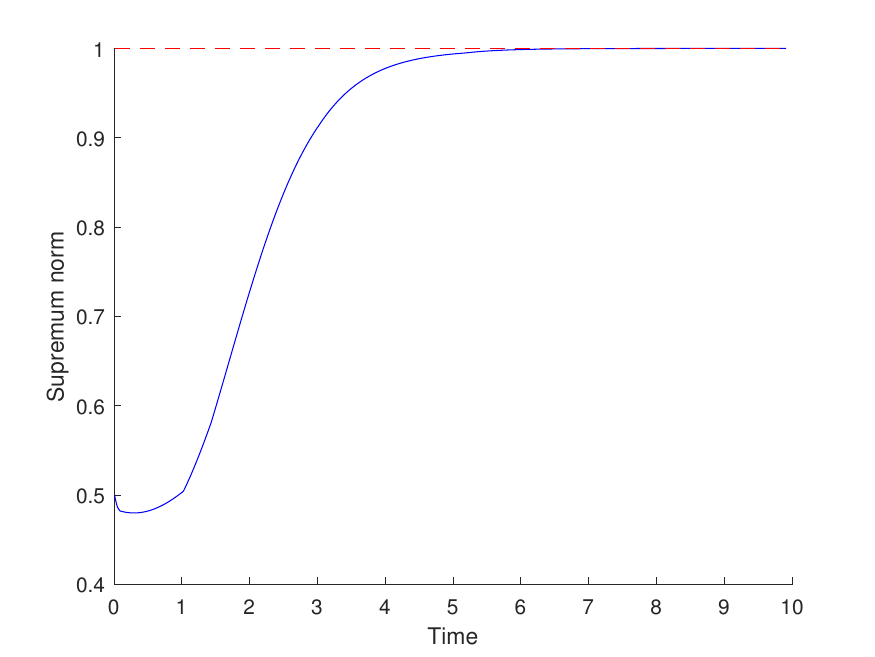}}
    \hfill
    \\
    \subfloat{\includegraphics[scale=0.52]{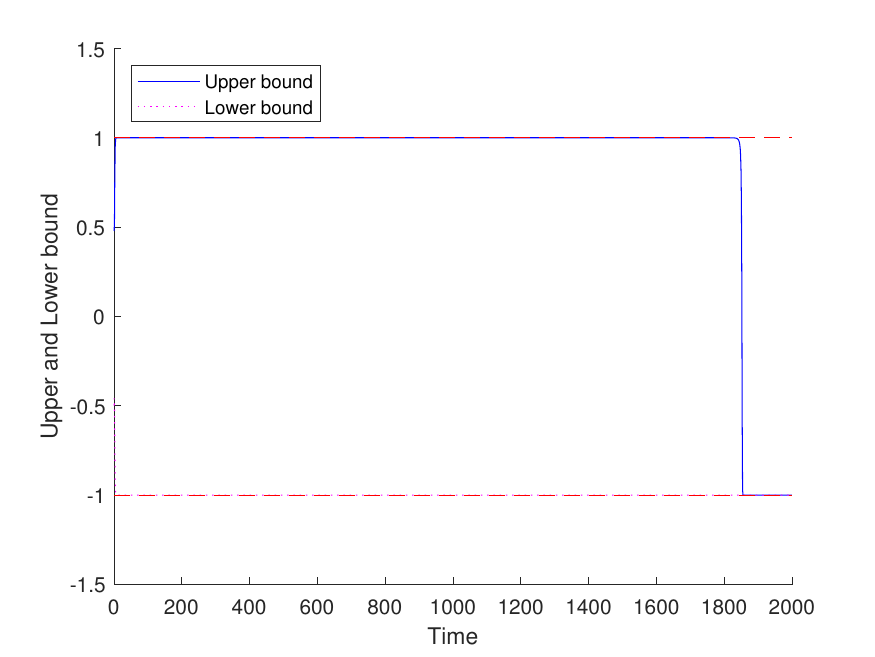}}
    \hfill
    \subfloat{\includegraphics[scale=0.52]{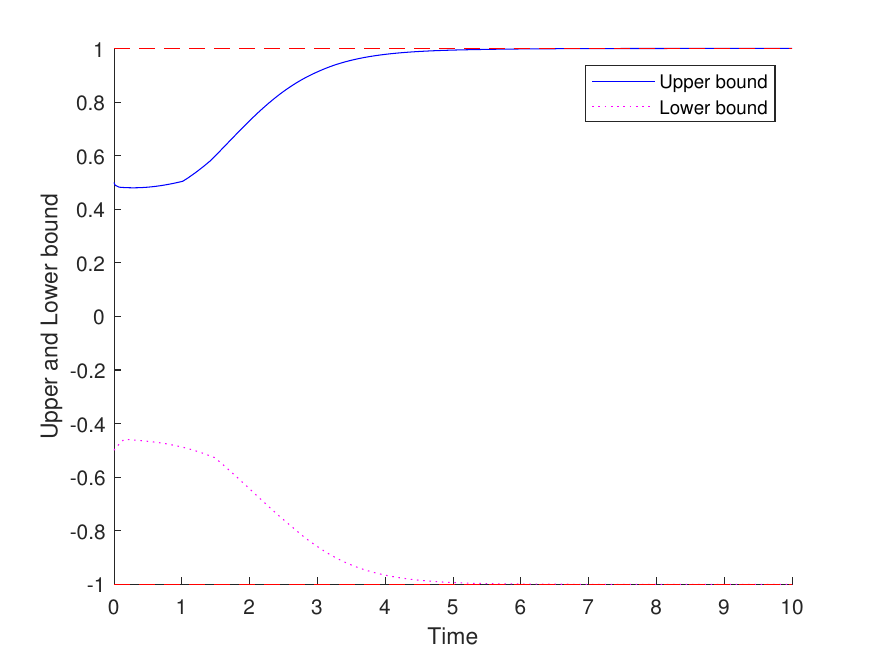}}
    \caption{Evolutions of the supremum norm (top row), 
    the upper and lower bound (bottom row) 
    of the phase variable $\phi$ for the double-well potential in \textbf{Example 3}}
    \label{F3_3}
    \hfill
\end{figure}

\begin{figure}[htp]
    \centering
    \subfloat[t=0]{\includegraphics[width=0.33\textwidth,height=0.33\textwidth]{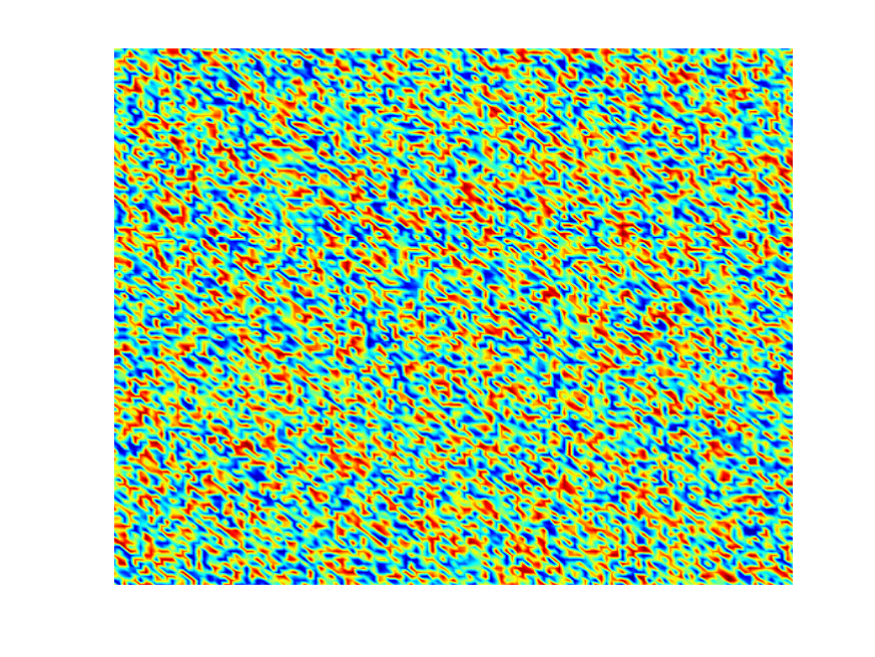}}
    \hfill
    \subfloat[t=6]{\includegraphics[width=0.33\textwidth,height=0.33\textwidth]{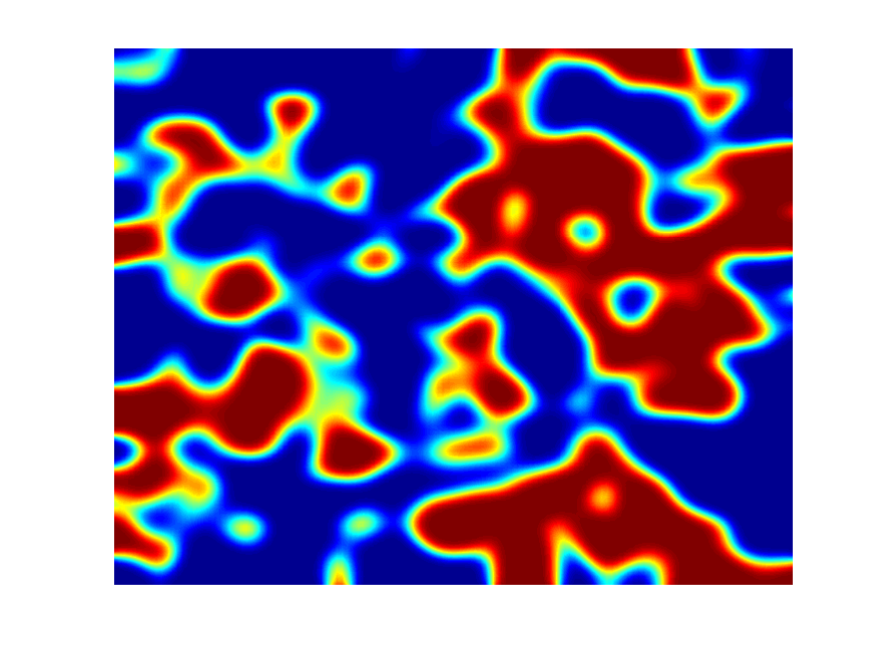}}
    \hfill
    \subfloat[t=16]{\includegraphics[width=0.33\textwidth,height=0.33\textwidth]{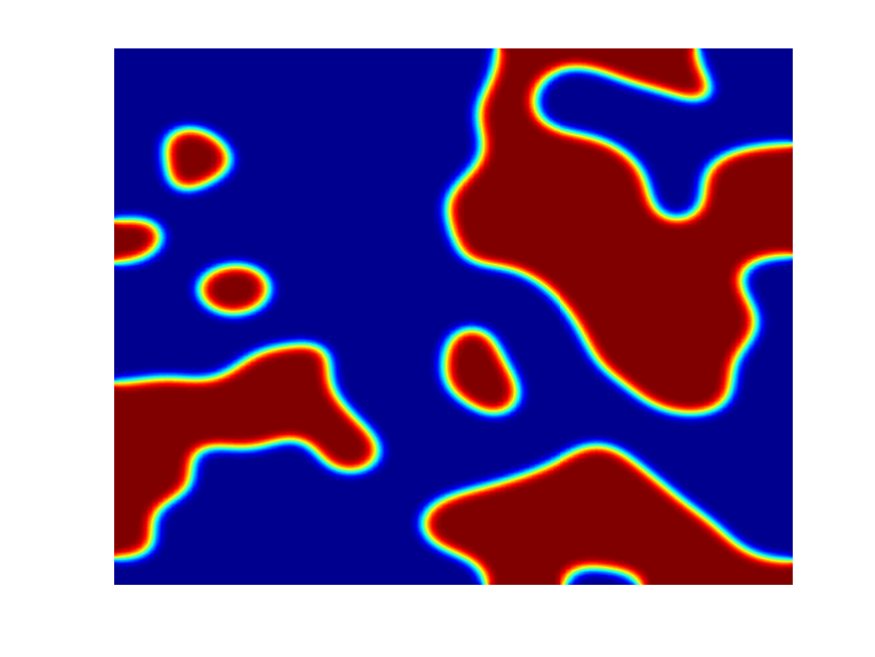}}
    \hfill
    \\
    \subfloat[t=50]{\includegraphics[width=0.33\textwidth,height=0.33\textwidth]{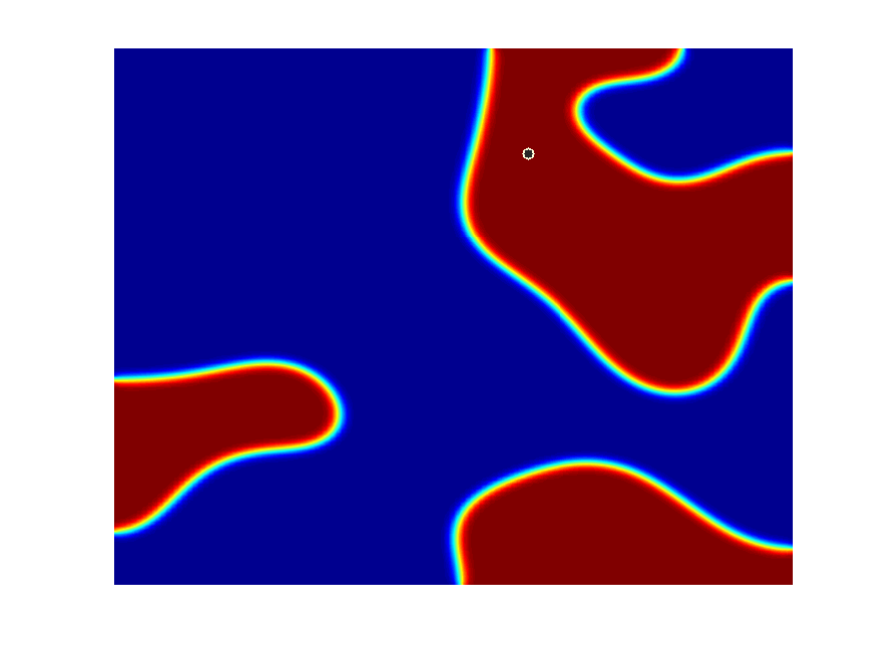}}
    \hfill
    \subfloat[t=350]{\includegraphics[width=0.33\textwidth,height=0.33\textwidth]{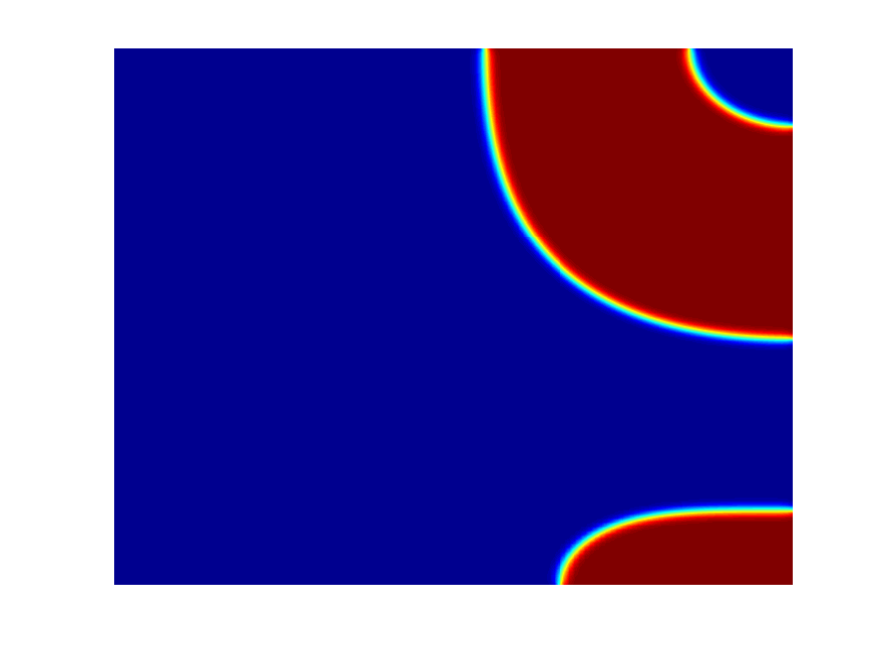}}
    \hfill
    \subfloat[t=700]{\includegraphics[width=0.33\textwidth,height=0.33\textwidth]{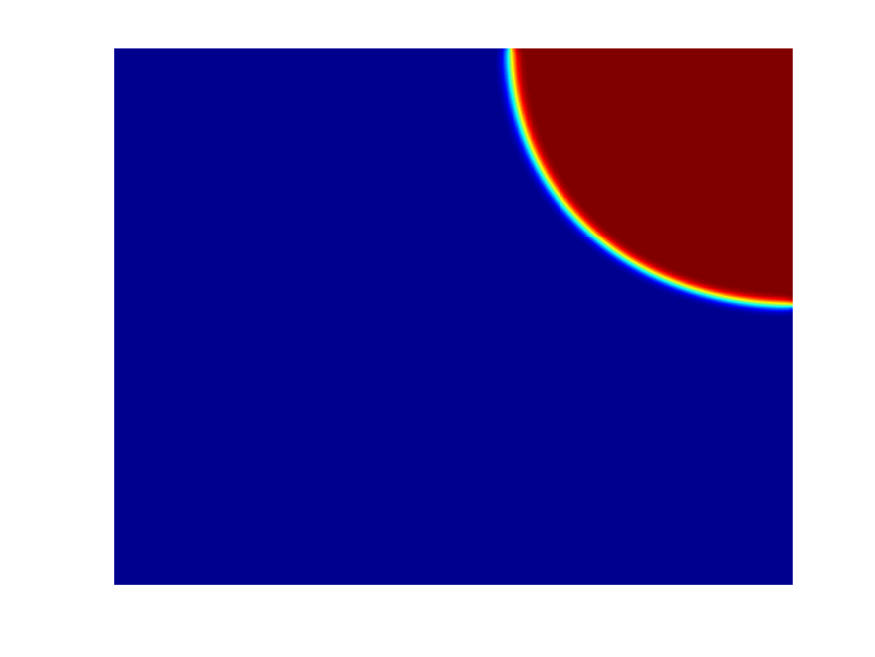}}
    \hfill
    \caption{Snapshots of the phase variable $\phi$ 
    are taken at $t=0,6,16,50,350,700$ with $\kappa =8.02,\beta=0.9575$ 
    for the Flory-Huggins potential case in \textbf{Example 3}}
    \label{F3_4}
    \hfill
\end{figure}

\begin{figure}[htp]
    \centering
    \subfloat{\includegraphics[scale=0.35]{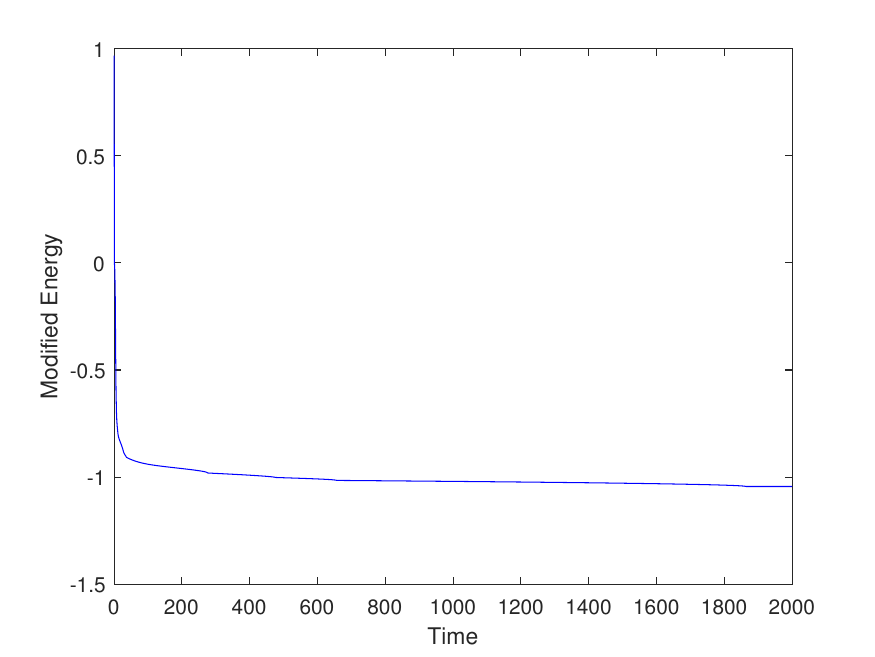}}
    \hfill
    \subfloat{\includegraphics[scale=0.35]{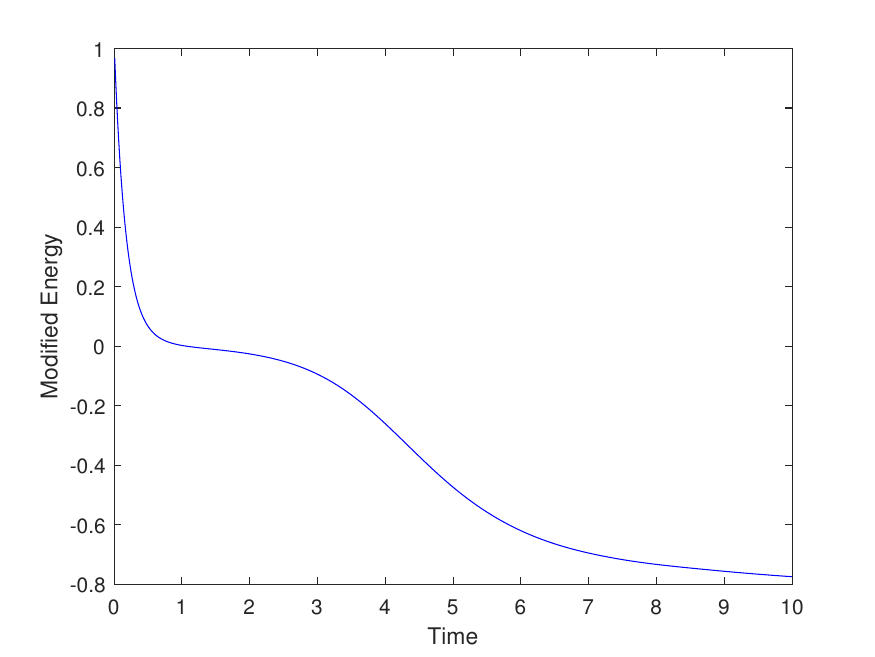}}
    \hfill
    \subfloat{\includegraphics[scale=0.35]{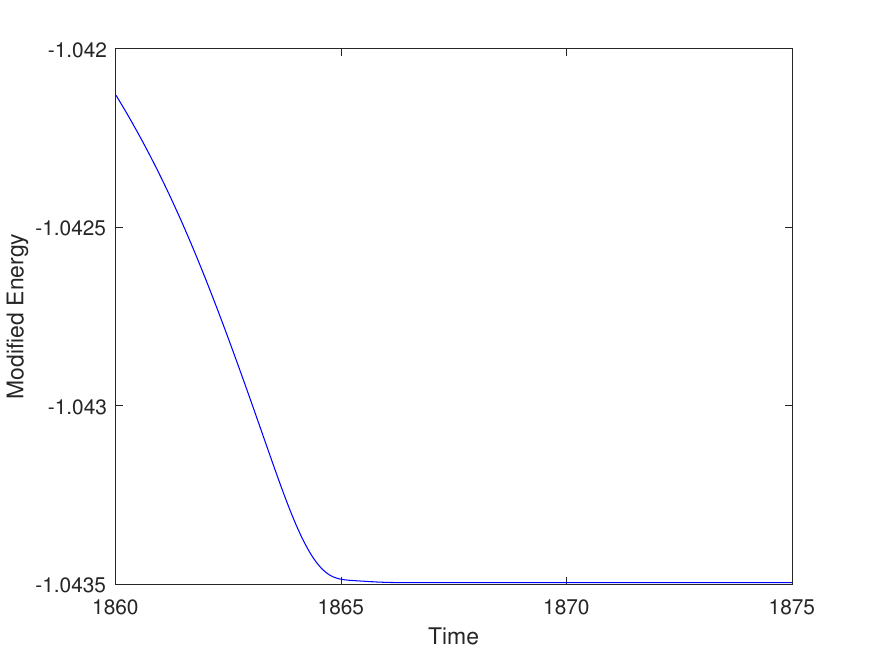}}
    \hfill
    \\
    \subfloat{\includegraphics[scale=0.35]{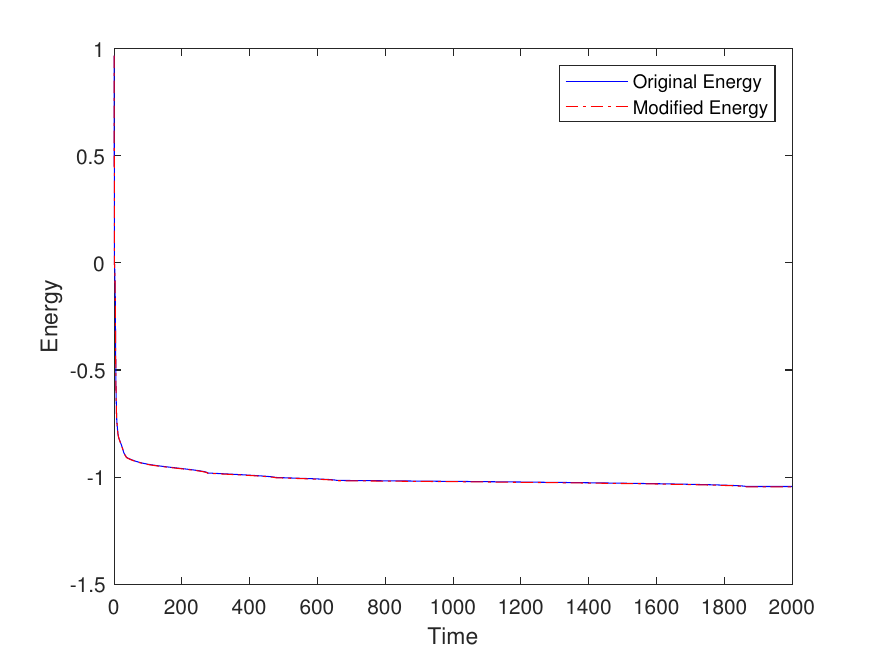}}
    \hfill
    \subfloat{\includegraphics[scale=0.35]{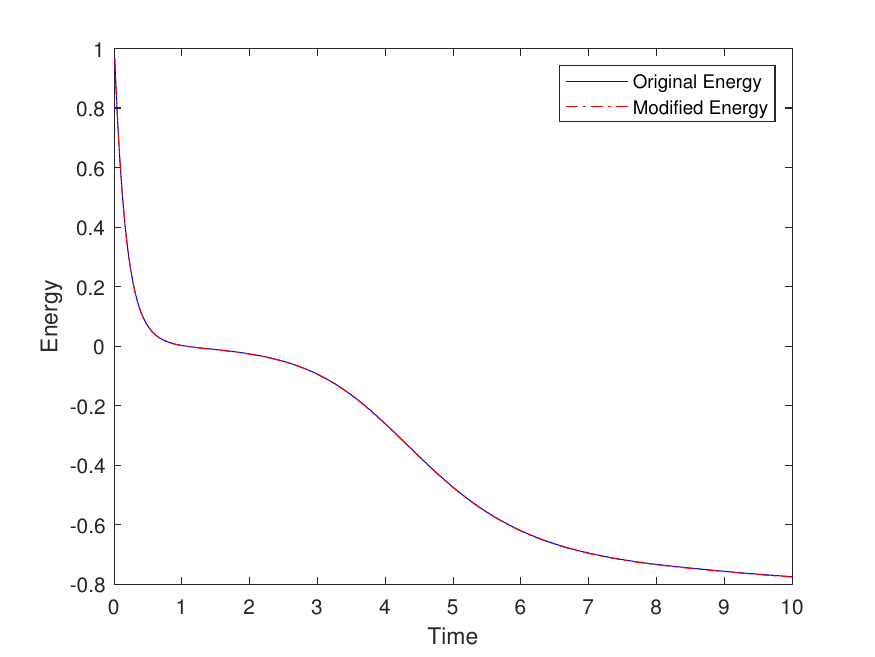}}
    \hfill
    \subfloat{\includegraphics[scale=0.35]{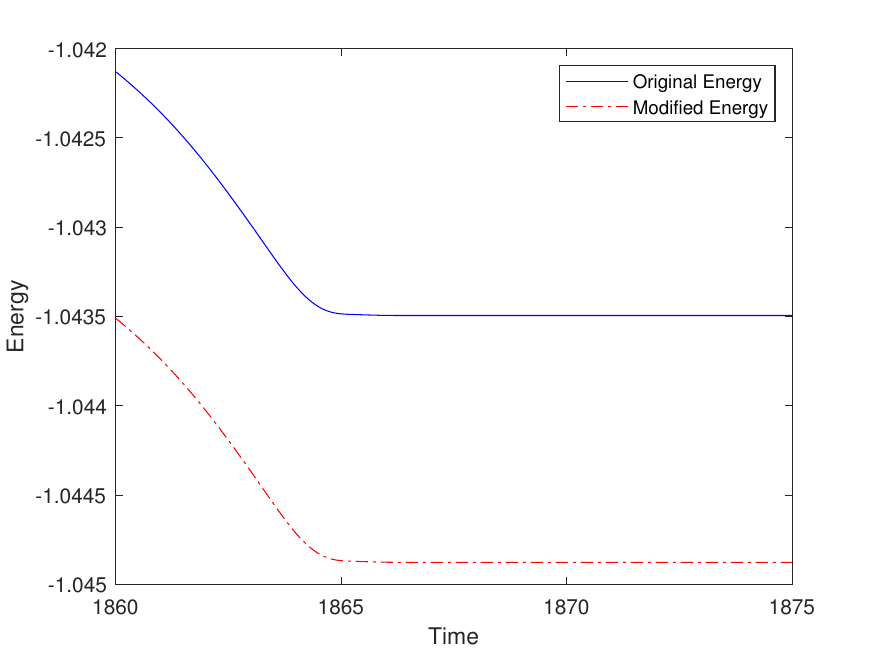}}
    \hfill
    \caption{Evolutions of the modified and original free energy 
    functional for the Flory-Huggins potential case in \textbf{Example 3}}
    \label{F3_5}
    \hfill
\end{figure}

\begin{figure}[htp]
    \centering
    \subfloat{\includegraphics[scale=0.52]{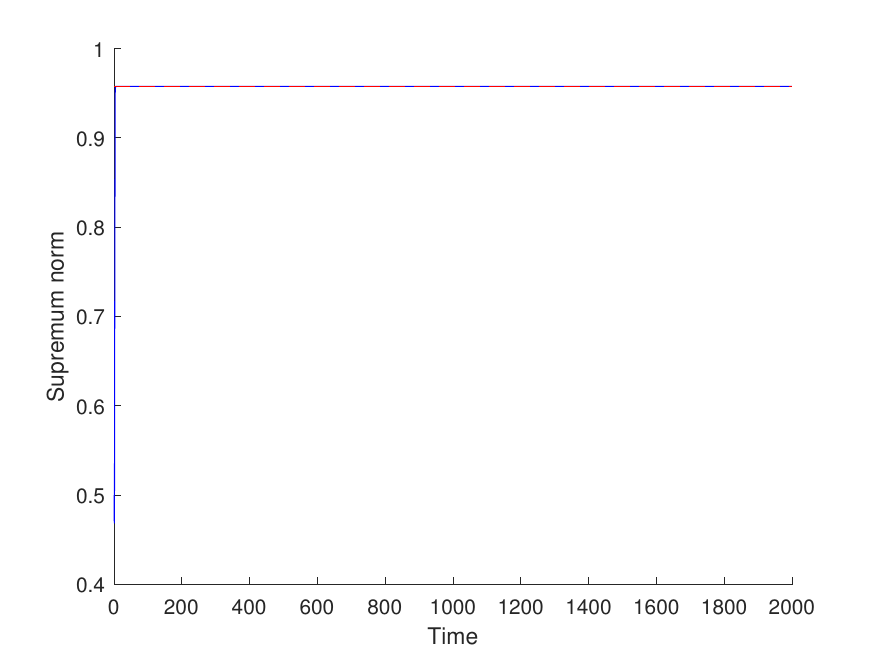}}
    \hfill
    \subfloat{\includegraphics[scale=0.52]{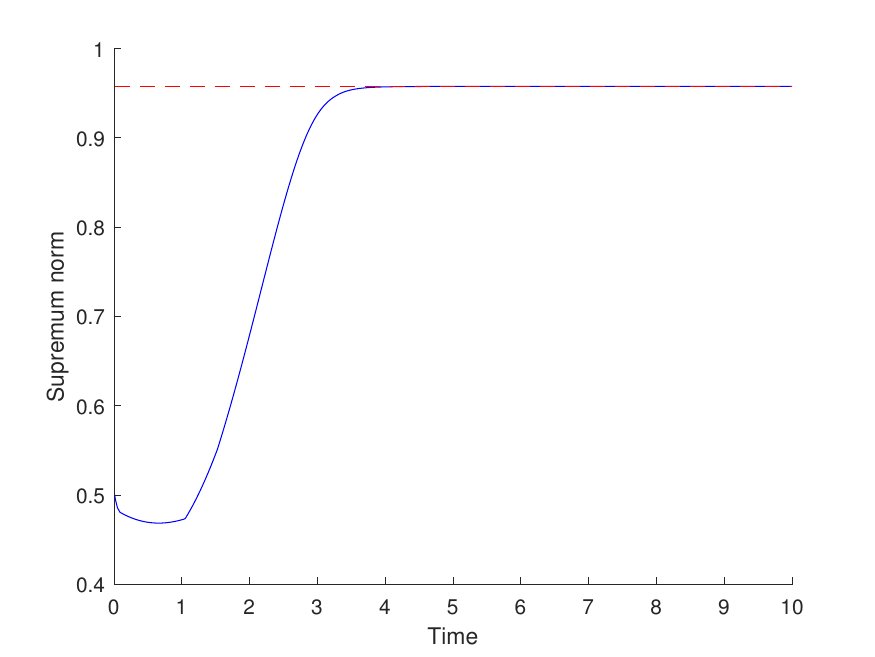}}
    \hfill
    \\
    \subfloat{\includegraphics[scale=0.52]{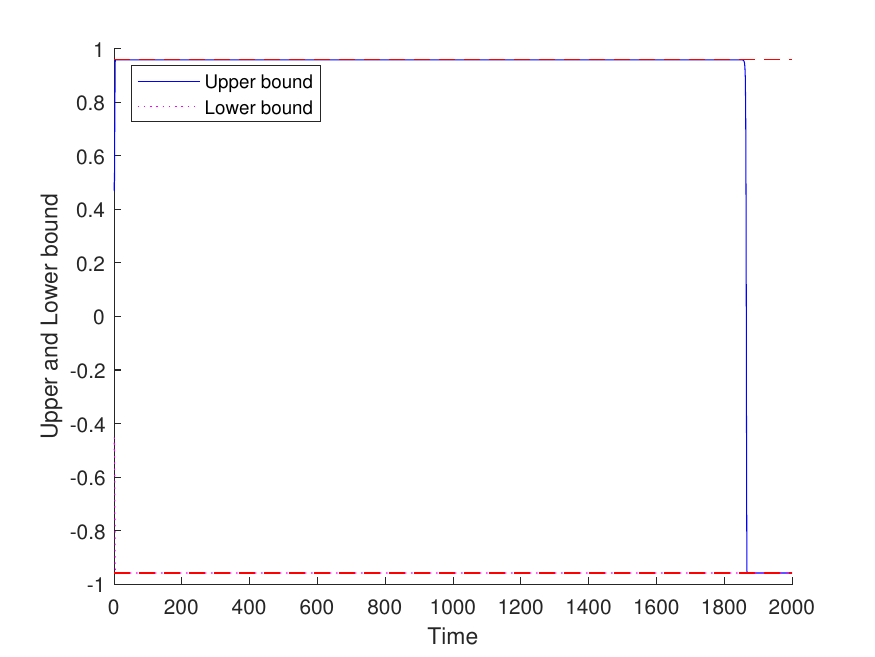}}
    \hfill
    \subfloat{\includegraphics[scale=0.52]{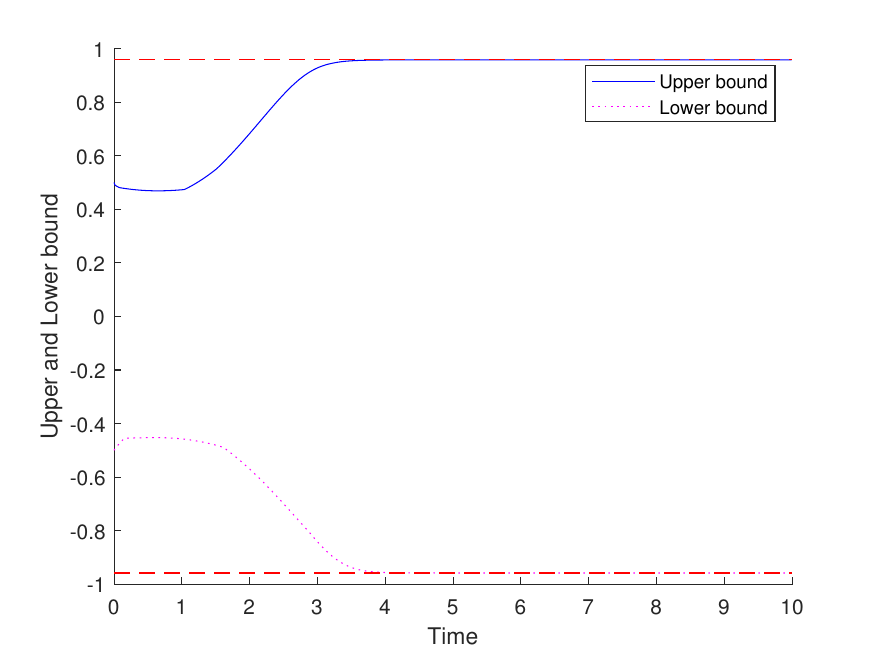}}
    \caption{Evolutions of the supremum norm (top row), 
    the upper and lower bound (bottom row) 
    of the phase variable $\phi$ for the Flory-Huggins potential in \textbf{Example 3}}
    \label{F3_6}
    \hfill
\end{figure}
\section{Conclusions}
In this paper, we construct first- and second-order (in time) 
efficient and linear schemes based on sESAV approach \cite{10}
for the NAC equation with general nonlinear potential. 
More importantly, both schemes satisfy unconditional energy stability and MBP. 
In addition, for MBP preservation, the first-order scheme is unconditional, 
while the second-order scheme has a limit on the time step.
We use finite difference method to discretize the nonlocal 
operator, and a fast solver based on FFT is implemented
for acceleration of the procedure. 
By analyzing and processing the resulting 
stiffness matrices and combining the relevant lemmas, we 
carefully and rigorously prove that the two proposed 
numerical schemes satisfy unconditional energy stability and 
MBP at the fully discrete level.
Numerical experiments are carried out using the $\delta $-parameterized 
Gaussian kernel to verify the optimal temporal convergence rate and
to observe the steady phase transition interface.
The numerical results show that the evolution results presented by NAC model
are consistent with the numerical results of classical Allen-Cahn model 
under the same conditions. 
The difference is that it takes a longer time for NAC model to reach steady state.
The above conclusions show that the proposed numerical schemes is effective and efficient.\\

\noindent \textbf{Ethical Approval} \, Not applicable.\\

\noindent \textbf{Availability of supporting data} \, Data sharing not applicable to this article as no datasets were generated or analyzed during the current study.\\

\noindent \textbf{Competing interests} \, The authors declare no competing interests.\\

\noindent \textbf{Funding} \, This work was supported by the National Natural Science Foundation of China under Grants 11971272, 12001336.\\

\noindent \textbf{Acknowledgement} \, Not applicable. \\

\noindent \textbf{Authors' contributions} \, Xiaoqing Meng wrote the main manuscript text and completed the numerical experiments. Aijie Cheng, Zhengguang Liu and Xiaoqing Meng carried out the numerical analysis. All authors reviewed the manuscript.\\

    \bibliographystyle{plain}
    \bibliography{main}

\end{document}